\newtheorem{theorem}{Theorem}[section]
\newtheorem{corollary}[theorem]{Corollary}
\newtheorem{proposition}[theorem]{Proposition}
\newtheorem{lemma}[theorem]{Lemma}
\newtheorem{definition}[theorem]{Definition}
\newtheorem{setup}[theorem]{Setup}
\newtheorem{remark}[theorem]{Remark}
\newcommand{\cB}{{\mathcal B}}
\newcommand{\cI}{{\mathcal I}}
\newcommand{\cO}{{\mathcal O}}
\newcommand{\cR}{{\mathcal R}}
\newcommand{\cX}{{\mathcal X}}
\newcommand{\cN}{{\mathcal N}}
\newcommand{\cM}{{\mathcal M}}
\newcommand{\cU}{{\mathcal U}}
\newcommand{\cOXp}{{\cO_X^p}}
\newcommand{\bC}{{\mathbb{C}}}
\newcommand{\bP}{{\mathbb{P}}}
\newcommand{\bZ}{{\mathbb{Z}}}
\newcommand{\bv}{\bold{v}}
\begin{document}

\title{INFINITESIMAL LIPSCHITZ CONDITIONS ON A FAMILY OF ANALYTIC VARIETIES}
\author{Terence Gaffney and Thiago da Silva}

\maketitle

\begin{abstract}
{\small In this work, we extend the concept of the double of an ideal to the context of modules. We also obtain the genericity of the infinitesimal Lipschitz condition A for an enlarged class of analytic spaces.}
\end{abstract}

\let\thefootnote\relax\footnote{2010 \textit{Mathematics Subjects Classification} 32S15, 14J17, 32B99, 32C15, 13B22
	
	\textit{Key words and phrases.}Bi-Lipschitz Equisingularity, Double of Modules, Infinitesimal Lipschitz conditions}

%\fancypagestyle{}{}

\section*{Introduction}

\thispagestyle{empty}

The definition of Lipschitz saturation of an ideal appears in \cite{G2}, in the context of bi-Lipschitz equisingularity. The study of bi-Lipschitz equisingularity was started by Zariski \cite{Z}, Pham and Teissier \cite{PT}, and was further developed by Lipman \cite{L}, Mostowski \cite{M1,M2}, Parusinski \cite{PA1}, Birbrair \cite{B} and others.

In this work we continue the study started in \cite{G2}, which is the study of bi-Lipschitz equisingularity from the perspective of the work on Whitney equisingularity (see \cite{G3}).

The Lipschitz Saturation and the double of an ideal $I$, denoted $I_S$ and $I_D$, respectively, were defined in \cite{G2}, where $I$ is a sheaf of ideals of $\cO_X$, the analytic local ring of an analytic variety $X$. The ideal $I_S$ consists of elements in $\cO_X$ for which the quotient of its pullback by the blowup-map, with a local generator of the pullback of $I$ is Lipschitz. The double $I_D$ is the submodule of $\cO_{X\times X}^2$ generated by $(h\circ\pi_1,h\circ\pi_2)$, $h\in I$, where $\pi_1,\pi_2:X\times X\rightarrow X$ are the projections. Theorem 2.3 of \cite{G2} gives a relation between $I_S$ and the integral closure of $I_D$, and is very useful to get conditions for Bi-Lipschitz equisingularity in a family of curves. In \cite{SGP} the authors use the integral closure of ideals and the double to describe the bi-Lipschitz equisingularity of families of Essentially Isolated Determinantal Singularities.

In section \ref{sec0} we recall some basic background material.

In section \ref{sec1} we develop the idea of the double of a module, getting explicit sets of generators of the double from a known set of generators of the module, working toward an extension of Lemma 2.2 of \cite{G2} to the module setting. This set of generators will be very useful in the proofs of some results, mainly in section \ref{sec2}. We also compute the cosupport of the double, which gives us exactly the locus where it make sense to ask about the infinitesimal Lipschitz conditions, defined on section \ref{sec2}. 

Further we prove  Proposition \ref{P2.13} which states that the stalk of the double of a sheaf of modules $\cM$ at $(x,x')$, $x\ne x'$, is the direct sum of the stalks of $\cM$ at $x$ and $x'$. Thus, the stalk of the double carries the same information as the stalks  of $\cM$ do at $x$ and $x'$, as long $x\neq x'$. If $\cM$ is the jacobian module of a family of analytic varieties, the stalks at $x$ and $x'$ determine the tangent hyperplanes at these two points. Since, to control the Lipschitz behavior of the tangent hyperplanes to $X$, it is natural to look for a sheaf on $X\times X$ whose stalks determine the tangent hyperplanes at each pair of distinct points, it is natural to consider the double of the jacobian module. 

The infinitesimal Lipschitz conditions for hypersurfaces were defined in \cite{G1}. In section \ref{sec2} we extend these definitions to an analytic variety with arbitrary codimension, using the double of a module (the jacobian module), developed in section \ref{sec1}. We prove the $iL_A$ condition is generic and then we apply this to the grassmanian modification of an analytic variety in the section 4.

In section 5, we define a strengthening of the $iL_A$ condition for the case of families of curves, and show this strengthened version implies the Lipschitz equisingularity of families of ICIS curves.

\section*{Acknowledgements}

The authors are grateful to Nivaldo Grulha Jr. for his careful reading of this work, and to Maria A. S. Ruas, for the valuable conversations about the subject of this work. The first author was supported in part by PVE-CNPq, grant 401565/2014-9. The second author was supported by Funda\c{c}\~ao de Amparo \`a Pesquisa do Estado de S\~ao Paulo - FAPESP, Brazil, grant 2013/22411-2.

\section{Background on Lipschitz Saturation of Ideals and Integral Closure of Modules}\label{sec0}

The Lipschitz saturation of a local ring was defined by Pham and  Teissier in \cite{PT}.

\begin{definition}
Let $I$ be an ideal of $\cO_{X,x}$,  $SB_I(X)$ the saturation of the blow-up and $\pi_S: SB_I(X)\rightarrow X$ the projection map. The {\bf Lipschitz saturation} of the ideal $I$ is denoted $I_S$, and is the ideal $I_S:=\{h\in\cO_{X,x}\mid\pi_S^*(h)\in\pi_S^*(I)\}$.
\end{definition}

Since the normalization of a local ring $A$ contains the Lipschitz Saturation of $A$, it follows that $I\subseteq I_S\subseteq\overline{I}$. In particular, if $I$ is integrally closed then $I_S=\overline{I}$.

This definition can be given an equivalent statement using the theory of integral closure of modules. Since Lipschitz conditions depend on controlling functions at two different points as the points come together, we should look for a sheaf defined on $X\times X$. We describe a way of moving from a sheaf of ideals on $X$ to a sheaf on $X\times X$. 

Let $\pi_1,\pi_2: X\times X\rightarrow X$ be the projections to the i-th factor, and let $h\in\cO_{X,x}$. Define $h_D\in\cO_{X\times X,(x,x)}^{2}$ as $(h\circ\pi_1,h\circ\pi_2)$, called the double of $h$. We define the double of the ideal $I$, denoted $I_D$, as the submodule of $\cO_{X\times X,(x,x)}^2$ generated by $h_D$, where $h$ is an element of $I$.

We can see in \cite{G2}, the following result gives a link between Lipschitz saturation and integral closure of modules.

\begin{theorem}[\cite{G2}, Theorem 2.3]
Suppose $(X,x)$ is a complex analytic set germ, $I\subseteq\cO_{X,x}$ and $h\in\cO_{X,x}$.  
\noindent Then $h\in I_S$ if, and only if, $h_D\in\overline{I_D}$. 
\end{theorem}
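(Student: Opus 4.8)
For $\overline{I_D}$ the tool is the curve criterion for integral closure of modules: $h_D\in\overline{I_D}$ if and only if for every analytic arc $\phi=(\phi_1,\phi_2)\colon(\bC,0)\to(X\times X,(x,x))$ one has $h_D\circ\phi\in(\phi^{*}I_D)\,\cO_{\bC,0}$. For $I_S$ we fix generators $g_1,\dots,g_n$ of $I$ (we may assume $I\subseteq\mathfrak m$, the case $I=\cO_{X,x}$ being trivial) and unwind the definition. The ideal $\pi^{*}I$ becomes locally principal on the blow-up $\pi\colon B_I(X)\to X$, so on $SB_I(X)$ it is locally generated by a single element $g=\pi_S^{*}g_j$; and if $h\in\overline I$ then the quotient $(h\circ\pi)/g$ is already a regular function on the normalized blow-up $\overline{B_I(X)}$, since principal ideals in a normal ring are integrally closed. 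Hence $h\in I_S$ holds exactly when this quotient descends to $SB_I(X)$, that is, when it is Lipschitz along $B_I(X)$: tested on arcs, this says that for every pair of arcs $\phi_1,\phi_2\colon(\bC,0)\to(X,x)$ generically off $V(I)$, with canonical lifts $\tilde\phi_1,\tilde\phi_2$ to $B_I(X)$, the order in $t$ of the difference of the values of $(h\circ\pi)/g$ along $\tilde\phi_1$ and $\tilde\phi_2$ is at least $\operatorname{ord}_t(\tilde\phi_1-\tilde\phi_2)$ --- a distance that sees both the coordinates on $X$ and the ratios $g_i/g_j$.

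Before the main step I would handle the reductions. If $h_D\in\overline{I_D}$, restricting to the diagonal $\Delta\cong X$ --- on which $I_D$ restricts to a copy of $I$ while the off-diagonal generators vanish --- gives $h\in\overline I$; conversely $I_S\subseteq\overline I$ was noted earlier in the section. So in both directions we may assume $h\in\overline I$, hence $h$ vanishes on $V(I)$; consequently any arc one of whose components maps into $V(I)$ contributes a vanishing coordinate and poses no problem, and it suffices to treat arcs $\phi=(\phi_1,\phi_2)$ with each $\phi_i$ generically off $V(I)$, which lift canonically to $B_I(X)$ and to $\overline{B_I(X)}$.

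The heart of the argument is a direct calculation, performed after choosing a common parameter $t$ and writing $\phi_i^{*}(I)\,\cO_{\bC,0}=(t^{a_i})$. Using the explicit finite generating set of $I_D$ built from $g_1,\dots,g_n$ --- the vectors $(g_i\circ\pi_1,\,g_i\circ\pi_2)$ together with the vectors $\bigl(0,\,(x_k\circ\pi_2-x_k\circ\pi_1)(g_i\circ\pi_2)\bigr)$, as in Lemma~2.2 of \cite{G2} --- and the syzygies of the $(g_i\circ\phi_1,g_i\circ\phi_2)$ over the discrete valuation ring $\cO_{\bC,0}$, one unwinds $h_D\circ\phi\in(\phi^{*}I_D)\,\cO_{\bC,0}$ into $\operatorname{ord}_t(h\circ\phi_i)\ge a_i$ together with an inequality of the shape
\[
\operatorname{ord}_t\mu_{hj}(\phi)\ \ge\ \min\!\Bigl(\operatorname{ord}_t(\phi_1-\phi_2)+a_1+a_2,\ \min_i\operatorname{ord}_t\mu_{ij}(\phi)\Bigr),
\]
where $\mu_{ij}(\phi)=(g_i\circ\phi_1)(g_j\circ\phi_2)-(g_i\circ\phi_2)(g_j\circ\phi_1)$ and $\mu_{hj}(\phi)$ is the same expression with $g_i$ replaced by $h$. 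On the side of $I_S$, clearing denominators in the arc condition above, in the chart $\{g_j\neq 0\}$ of $B_I(X)$ that contains the common centre of the lifted arcs, produces exactly an inequality of this shape, because the minors $\mu_{ij}(\phi)$ are the numerators of the differences of the ratios $g_i/g_j$ along the two lifts. When the lifts have distinct centres the saturation imposes no condition, and then some $\mu_{ij}(\phi)$ has small order while $h\in\overline I$ forces $\mu_{hj}(\phi)$ to be an $\cO_{\bC,0}$-combination of such minors, so the inequality holds automatically; matching the two reformulations then yields the theorem.

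The step I expect to be the main obstacle is precisely this matching, because the two constructions live in different coordinates: the saturated blow-up is naturally expressed through the ratios $g_i/g_j$ on $B_I(X)$, whereas the double $I_D$ is expressed through the coordinate differences $x_k\circ\pi_2-x_k\circ\pi_1$ on $X\times X$. One must check that the module $\phi^{*}I_D$ simultaneously detects $\operatorname{ord}_t(\phi_1-\phi_2)$ (through the off-diagonal generators) and all the ratio-difference minors $\mu_{ij}(\phi)$ (through the syzygies of the generators $(g_i\circ\pi_1,g_i\circ\pi_2)$), with the correct weighting by $a_1$ and $a_2$ --- reconciling, in particular, the minimum over all index pairs with the single chart appearing on the $I_S$ side --- and that this remains correct even though $\phi_1$ and $\phi_2$ cannot in general be reparametrized simultaneously so as to make $a_1=a_2$. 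Once this dictionary is in place, together with the routine bookkeeping for arcs inside $V(I)$ and for pairs of lifts with distinct centres, both sides collapse to the single inequality above and the equivalence follows.
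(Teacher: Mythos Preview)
The paper does not give a proof of this theorem: it is stated in Section~\ref{sec0} as background and attributed to \cite{G2}, Theorem~2.3, with no argument supplied. So there is no in-paper proof to compare your proposal against.

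That said, your outline is the right framework and is essentially the route taken in \cite{G2}. The ingredients you isolate --- the curve criterion for $\overline{I_D}$, the Pham--Teissier characterization of membership in the saturated ring as a Lipschitz condition on $(h\circ\pi)/g$, and the explicit finite generating set for $I_D$ from Lemma~2.2 of \cite{G2} --- are exactly what is needed, and the reduction to $h\in\overline I$ via restriction to the diagonal is clean. Your self-diagnosed obstacle, the dictionary between the ratio coordinates $g_i/g_j$ on $B_I(X)$ and the coordinate differences $z_k\circ\pi_1-z_k\circ\pi_2$ on $X\times X$, is genuinely the heart of the matter; the point is that on $B_I(X)\times_X B_I(X)$ (or its normalization) the diagonal ideal is generated by the pullbacks of the $z_k\circ\pi_1-z_k\circ\pi_2$ together with the differences of the ratios, and the second set is already controlled by the first once one multiplies through by a local generator of $\pi^*I$ on each factor --- this is what makes the displayed inequality involving the $\mu_{ij}(\phi)$ match on both sides. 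Your sketch stops just short of writing this down, but there is no missing idea; carrying out the bookkeeping over the DVR $\cO_{\bC,0}$ (syzygies of the $g_i\circ\phi_1$, then the residual second coordinate landing in $I_\Delta(\phi)\cdot\phi_2^*(I)$) completes it.
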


Using the Lipschitz saturation of ideals (and doubles), in \cite{G1} the first author defined the infinitesimal Lipschitz conditions for hypersurfaces.

Let $X^{n+k},0\subseteq\bC^{n+1+k},0$ be a hypersurface, containing a smooth subset $Y$ embedded in $\bC^{n+1+k}$ as $0\times\bC^k$, with $p_Y$ the projection to $Y$. Assume $Y=S(X)$, the singular set of $X$. Suppose $F$ is the defining equation of $X$, $(z,y)$ coordinates on $\bC^{n+1+k}$. Denote by $f_y(z)=F(z,y)$ the family of functions defined by $F$, and by $X_y$, $f_y^{-1}(0)$. Assume that $X_y$ has an isolated singularity at the origin. Let $m_Y$ denote the ideal defining $Y$, and $J(X)_Y$, the ideal generated by the partial derivatives with respect to the $y$ coordinates, $J_z(X)$, those with respect to the $z$ coordinates.
Here we work with the double relative to $Y$, which means that we work with the projections $\pi_1$ and $\pi_2$ defined on the fibered product $X\underset{Y}{\times}X$. Then $I_D$, the double of $I$ with respect to $Y$, is defined on $X\underset{Y}{\times}X$, similarly to the definition of $I_D$ in the absolute case.

\begin{definition}
We say the pair $(X,Y)$ satisfy the $iL_{m_Y}$ condition at the origin if either of the two equivalent conditions hold:
\begin{enumerate}
\item $J(X)_Y\subseteq(m_YJ_z(X))_S$
\item $(J(X)_Y)_D\subseteq\overline{(m_YJ_z(X))_D}$
\end{enumerate}
\end{definition}
An analogous condition for $iL_{m_Y}$ is $J(X)_Y\subseteq\overline{m_YJ_z(X)}$. This is the equivalent to the Verdier's condition $W$ or the Whitney conditions.
Next we give the definition of $iL_A$.

\begin{definition}
We say the pair $(X,Y)$ satisfy the $iL_{A}$ condition at the origin if either of the two equivalent conditions hold:
\begin{enumerate}
\item $J(X)_Y\subseteq(J_z(X))_S$
\item $(J(X)_Y)_D\subseteq\overline{(J_z(X))_D}$
\end{enumerate}
\end{definition}

The analogous condition is $J(X)_Y\subseteq\overline{J_z(X)}$. If one works on the ambient space, then this is equivalent to the $A_F$ condition.

In Proposition 4.1 of \cite{G1} it is proved that the cosupport of $(m_YJ_z(X))_D$ and $(J_z(X))_D$ on $X\underset{Y}{\times}X$ are equal, and consist of $$\Delta(X)\cup(X\times_{Y}0)\cup(0\times_{Y}X).$$

In \cite{G1} we have the following result.

\begin{theorem}[\cite{G1}, Proposition 4.2 and Theorem 4.3]
In the set-up of this section, the integral closure conditions for $iL_A$ and $iL_{m_Y}$ hold at all points of $\Delta(X)-((0,0)\times Y)$, and both conditions hold at all point of $(0\times_{Y}X)\cup(X\times_{Y}0)-((0,0)\times Y)$ if condition W holds at all point $(0,y)$, $y\in Y$. Furthermore, the $iL_A$ condition holds generically along $(0,0)\times Y$.
\end{theorem}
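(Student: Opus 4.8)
The plan is to treat the three assertions separately.

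The claims on $\Delta(X)-((0,0)\times Y)$ and on $(0\times_Y X)\cup(X\times_Y 0)-((0,0)\times Y)$ are the routine ones. A point of the first set is $((z,y),(z,y))$ with $z\neq 0$; since $Y=S(X)$ and every $X_y$ has an isolated singularity, $(z,y)$ is a smooth point of $X_y$, so some $\partial F/\partial z_j$ is a unit in $\cO_{X,(z,y)}$ and hence $J_z(X)_{(z,y)}=(m_Y J_z(X))_{(z,y)}=\cO_{X,(z,y)}$. Their Lipschitz saturations are then the whole ring, so $J(X)_Y\subseteq(J_z(X))_S$ and $J(X)_Y\subseteq(m_Y J_z(X))_S$ hold at $(z,y)$, and by Theorem~2.3 of \cite{G2}, applied to each generator of $J(X)_Y$ and using that the integral closure of a module is again a module, the double conditions $(J(X)_Y)_D\subseteq\overline{(J_z(X))_D}$ and $(J(X)_Y)_D\subseteq\overline{(m_Y J_z(X))_D}$ hold at $((z,y),(z,y))$. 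For a point $p$ of the second set the two factors are distinct points of $X$, one a smooth point of its fibre (the one with nonzero $z$-coordinate) and the other a point $(0,y)$ of $Y$; by Proposition~\ref{P2.13} the stalk at $p$ of each double is the direct sum of the stalks of the corresponding module at the two factors, so the double conditions at $p$ are equivalent to the ordinary conditions $J(X)_Y\subseteq\overline{J_z(X)}$ and $J(X)_Y\subseteq\overline{m_Y J_z(X)}$ at each factor separately. At the smooth factor both are trivial as above; at the factor $(0,y)$ the second is $J(X)_Y\subseteq\overline{m_Y J_z(X)}$, which is equivalent to condition $W$ at $(0,y)$, and the first follows from it since $\overline{m_Y J_z(X)}\subseteq\overline{J_z(X)}$. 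Thus the hypothesis on $W$ gives both conditions at every such $p$.

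For the genericity statement I would first localize the failure. Off the cosupport $\Delta(X)\cup(0\times_Y X)\cup(X\times_Y 0)$ of $(J_z(X))_D$ (Proposition~4.1 of \cite{G1}) the module $(J_z(X))_D$ is the full free module, so the $iL_A$ condition is automatic there; combining this with the two cases just treated, the locus $B\subseteq X\times_Y X$ where $(J(X)_Y)_D\subseteq\overline{(J_z(X))_D}$ fails is contained in $(0,0)\times Y$. Since $B$ is analytic --- it is the support of the image of the coherent sheaf $(J(X)_Y)_D$ in the coherent quotient $\cO_{X\times_Y X}^2/\overline{(J_z(X))_D}$ --- and $(0,0)\times Y\cong Y\cong\bC^k$ is irreducible, $iL_A$ holds generically along $(0,0)\times Y$ if and only if $B\neq(0,0)\times Y$, i.e.\ if and only if there is a single parameter value $y_0$ at which $iL_A$ holds at $((0,y_0),(0,y_0))$.

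To produce such a $y_0$ I would argue by semicontinuity of a multiplicity. Attach to $p=((0,y),(0,y))$ the Buchsbaum--Rim-type multiplicity along the stratum $(0,0)\times Y$ of the pair $\big((J(X)_Y)_D+(J_z(X))_D,\ (J_z(X))_D\big)$; the hypothesis that each $X_y$ has an isolated singularity makes $J_z(f_y)$ of finite colength, which is exactly what makes this (stratified) multiplicity finite and supplies the dimensional hypotheses under which equality of the two multiplicities is equivalent to the integral-dependence relation $(J(X)_Y)_D\subseteq\overline{(J_z(X))_D}$. This multiplicity is upper semicontinuous in $y$, hence equals its generic (minimal) value on a nonempty Zariski-open $U\subseteq Y$, and for $y_0\in U$ the principle of specialization of integral dependence gives $iL_A$ at $((0,y_0),(0,y_0))$; then $B\cap((0,0)\times Y)\subseteq(0,0)\times(Y\setminus U)$ is a proper analytic subset, which is what is wanted.

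The main obstacle is precisely this last step. Because the cosupport of $(J_z(X))_D$ is positive-dimensional along $(0,0)\times Y$ one cannot use the naive finite-colength Buchsbaum--Rim multiplicity but must use its relative/stratified version (the multiplicity of a module along a submanifold, in the circle of ideas around the multiplicity--polar theorem), and one must check that the double of the relative jacobian ideal of a family with isolated fibre singularities does satisfy the hypotheses that make the comparison of multiplicities detect integral dependence. A more computational alternative for producing $y_0$ is to take $y_0$ generic, use that the germ of the family at $(0,y_0)$ is then as simple as possible --- in particular $W$ holds and the contact orders of arcs through $(0,y_0)$ are controlled --- and to verify the arc criterion for $(J(X)_Y)_D\subseteq\overline{(J_z(X))_D}$ directly on pairs of arcs through $((0,y_0),(0,y_0))$ with common image in $Y$; that is the approach closest to the explicit computations for families of curves in Section~5.
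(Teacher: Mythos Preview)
Your treatment of the two easy loci is correct and matches the paper's own arguments (compare Proposition~\ref{P7}): at a diagonal point off $Y$ the relative jacobian is the whole ring, and at an off-diagonal point Proposition~\ref{P2.13} reduces the double condition to the ordinary inclusion at each factor, which is trivial at the smooth factor and is condition~$W$ at the $Y$-factor.

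The genericity argument, however, has a genuine gap. Upper semicontinuity of your stratified Buchsbaum--Rim multiplicity only tells you that the multiplicity takes a constant minimal value on a Zariski open $U\subseteq Y$; it does not tell you that this minimal value is the one corresponding to integral dependence. The principle of specialization of integral dependence runs the other way: it takes as \emph{input} that $(J(X)_Y)_D\subseteq\overline{(J_z(X))_D}$ holds generically and that the multiplicity is constant, and outputs the inclusion at a special point. Using it to \emph{produce} a first point $y_0$ where $iL_A$ holds is circular. Your fallback (``verify the arc criterion directly at a generic $y_0$'') is exactly the content of the theorem and is not carried out.

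The paper's proof of the generalization (Theorem~\ref{T2.2.8}) avoids multiplicities entirely and is a direct computation on a resolution. One passes to the ideal $\rho((J_z(X))_D)$ on $X\times_Y X\times\bP^{2p-1}$, takes its normalized blow-up $\pi:N\to X\times_Y X\times\bP^{2p-1}$, and works at a generic smooth point $q$ of a component of the exceptional divisor lying over $Y$. There one can choose coordinates $(y',u',x')$ with $y'$ lifting the $Y$-coordinates and $u'$ a local equation of $E$ independent of $y'$. The identity $[\,1,S_2/S_1,\dots\,]\cdot(F\circ p_1,F\circ p_2)^t\equiv 0$ on $X\times_Y X\times\bP^{2p-1}$ is pulled back to $N$ and differentiated in $y'$; the chain rule then expresses $\pi^*\rho((\partial F/\partial y)_D)$ as a sum of terms of two types: terms $\frac{\partial(z_i\circ\bar\pi_1)}{\partial y'}\cdot\pi^*\rho((\partial F/\partial z_i)_D)$, which are visibly in $\pi^*\rho((J_z(X))_D)$, and terms involving $\frac{\partial}{\partial y'}(z_i\circ\bar\pi_1-z_i\circ\bar\pi_2)$, which have the same $u'$-order as $z_i\circ\bar\pi_1-z_i\circ\bar\pi_2$ itself (because $u'$ and $y'$ are independent) and hence lie in the integral closure of $\pi^*\rho((J_z(X))_D)$ via the second block of generators of the double. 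Since $N$ is normal this integral closure equals the ideal itself, and the module criterion of \cite{GK} then gives $(J(X)_Y)_D\subseteq\overline{(J_z(X))_D}$ over a Zariski open of $Y$. No appeal to semicontinuity or PSID is needed; the key idea you are missing is this chain-rule identity on the normalized blow-up together with the observation that $\partial/\partial y'$ preserves $u'$-order.
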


In Section \ref{sec2} we generalize these results for $X$ of arbitrary codimension.

Let us recall two results about the integral closure of modules which will inspire good definitions for Lipschitz saturation of modules.

\vspace{0,5cm}

{\it The ideal sheaf $\rho(\cM)$ on $X\times \bP^{p-1}$ associated to a submodule sheaf $\cM$ of $\cOXp$ (see \cite{GK})}: Given $h=(h_1,...,h_p)\in\cOXp$ and $(x,[t_1,...,t_p])\in X\times\bP^{p-1}$, with $t_i\neq0$, we define $\rho(h)$ as the germ of the analytic map given by $\sum\limits_{j=1}^p h_j(z)\frac{T_j}{T_i}$ which is well-defined on a Zariski open subset of $X\times\bP^{p-1}$ that contains the point $(x,[t_1,...,t_p])$. We define $\rho(\cM)$ as the ideal generated by $\{\rho(h)\mid h\in \cM\}$. The next result, proved in \cite{GK}, gives a strong relation between the integral closure of modules and ideals.

\begin{proposition}[\cite{GK}, Proposition 3.4]\label{proposition GK}
Let $h\in\cO_{X,x}^p$. Then $h\in\overline{\cM}$ at $x$ if, and only if, $\rho(h)\in\overline{\rho(\cM)}$ at all point $(x,[t_1,...,t_p])\in V(\rho(\cM))$.
\end{proposition}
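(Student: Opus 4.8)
The plan is to prove the equivalence through the valuative (curve) criterion for integral closure: for ideals on $X\times\bP^{p-1}$ in one direction, and for modules on $X$ in the other. Throughout, fix a finite generating set $m_1,\dots,m_k$ of $\cM_x$, write $\cO_1:=\bC\{s\}$ for the local ring of $(\bC,0)$, and record two elementary facts: on the chart $\{T_i\neq 0\}$ the ideal $\rho(\cM)$ is generated by the functions $\rho(m_\nu)=\sum_j (m_\nu)_j\,T_j/T_i$; and $V(\rho(\cM))\cap(\{x\}\times\bP^{p-1})$ is exactly $\bP$ of the annihilator $\cM(x)^\perp\subseteq\bC^p$ of the span $\cM(x)$ of the values $m_1(x),\dots,m_k(x)$.

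For the implication $h\in\overline{\cM}\ \Rightarrow\ \rho(h)\in\overline{\rho(\cM)}$ at a point $q=(x,[t])$: let $\psi=(\phi,\ell)\colon(\bC,0)\to(X\times\bP^{p-1},q)$ be an arbitrary analytic arc and lift $\ell$ to $\tilde\ell\colon(\bC,0)\to\bC^p$ with $\tilde\ell(0)$ a nonzero representative of $[t]$. By the curve criterion for modules, $\phi^*h=\sum_{\nu=1}^{k} a_\nu\,\phi^*m_\nu$ for suitable $a_\nu\in\cO_1$; dehomogenizing in a chart containing $[t]$, a direct computation gives $\psi^*\rho(h)=\sum_\nu a_\nu\,\psi^*\rho(m_\nu)$, hence $\operatorname{ord}_s\psi^*\rho(h)\ge\operatorname{ord}_s\psi^*\rho(\cM)$. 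Since $\psi$ was arbitrary, the curve criterion for ideals yields $\rho(h)\in\overline{\rho(\cM)}$ at $q$. (This half holds at every point of $X\times\bP^{p-1}$; restricting to $V(\rho(\cM))$ matters only for the converse, where off $V(\rho(\cM))$ the ideal is the unit ideal and the claim is vacuous.)

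For the converse, assume $\rho(h)\in\overline{\rho(\cM)}$ at every point of $V(\rho(\cM))$. Fix an arbitrary arc $\phi\colon(\bC,0)\to(X,x)$; by the curve criterion for modules it suffices to show $\phi^*h$ lies in the $\cO_1$-submodule $N:=(\phi^*m_1,\dots,\phi^*m_k)\cO_1\subseteq\cO_1^p$. Suppose not. Since $\cO_1$ is a discrete valuation ring, choose a basis $f_1,\dots,f_p$ of $\cO_1^p$ with $N=\bigoplus_{i} s^{a_i}\cO_1 f_i$, $0\le a_1\le\cdots\le a_p\le\infty$; then $\phi^*h=\sum_i c_i f_i$ with $\operatorname{ord}_s c_{i_0}<a_{i_0}$ for some $i_0$, so in particular $a_{i_0}\ge 1$. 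The dual element $f_{i_0}^*$ is a member of a basis of $\operatorname{Hom}_{\cO_1}(\cO_1^p,\cO_1)$, hence unimodular, so writing $f_{i_0}^*(v)=\sum_j u_j v_j$ we have $u(0)\neq 0$; let $\ell(s)=[u_1(s):\cdots:u_p(s)]$, an arc with $\ell(0)=[u(0)]$. Since $f_{i_0}^*(\phi^*m_\nu)$ is the $f_{i_0}$-coordinate of $\phi^*m_\nu\in N$, it lies in $s^{a_{i_0}}\cO_1\subseteq s\cO_1$, so reduction mod $s$ gives $\sum_j u_j(0)(m_\nu)_j(x)=0$ for all $\nu$; thus $u(0)\in\cM(x)^\perp$, i.e.\ $(x,[u(0)])\in V(\rho(\cM))$. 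Finally, dehomogenizing $\psi:=(\phi,\ell)$ in a chart where $u(0)\neq 0$, one finds $\operatorname{ord}_s\psi^*\rho(h)=\operatorname{ord}_s c_{i_0}$ while $\operatorname{ord}_s\psi^*\rho(\cM)=\min_\nu\operatorname{ord}_s f_{i_0}^*(\phi^*m_\nu)=a_{i_0}$ (with $\psi^*\rho(\cM)=0$ when $a_{i_0}=\infty$). Hence $\operatorname{ord}_s\psi^*\rho(h)<\operatorname{ord}_s\psi^*\rho(\cM)$, contradicting $\rho(h)\in\overline{\rho(\cM)}$ at $\psi(0)\in V(\rho(\cM))$. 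Therefore $\phi^*h\in N$ for every $\phi$, i.e.\ $h\in\overline{\cM}$. The degenerate case $\cM(x)=\bC^p$ (equivalently $\cM_x=\cO_{X,x}^p$ by Nakayama, whence $V(\rho(\cM))$ is empty near the fibre) is handled trivially, since then $\overline{\cM}=\cO_{X,x}^p$.

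I expect the main obstacle to be the converse, and within it the point that the arc $\ell$ constructed from the projection covector actually passes through a point of $V(\rho(\cM))$: this is exactly what the unimodularity of $f_{i_0}^*$ combined with the inequality $a_{i_0}\ge 1$ provides, and it explains why the hypothesis is imposed only over $V(\rho(\cM))$. A secondary technical burden is the careful translation between the module-theoretic order on $\cO_1^p$ and the orders along arcs of the dehomogenized generators $\rho(m_\nu)$ across the affine charts of $\bP^{p-1}$, together with justifying $\operatorname{ord}_s\psi^*\rho(\cM)=a_{i_0}$ by passing from arbitrary elements of $\cM$ to the chosen generators.
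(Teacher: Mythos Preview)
The paper does not supply its own proof of this proposition: it is quoted verbatim as background from \cite{GK} (Proposition~3.4 there) and used as a tool, so there is nothing in the present paper to compare your argument against line by line.

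That said, your proof is correct and is essentially the standard valuative argument one would expect. Both implications go through the curve criterion, and for the converse you exploit that $\cO_1=\bC\{s\}$ is a DVR to put $N=\phi^*\cM$ in Smith normal form $\bigoplus_i s^{a_i}\cO_1 f_i$; the key observation, that the dual covector $f_{i_0}^*$ is unimodular and hence defines a genuine arc in $\bP^{p-1}$ landing in $V(\rho(\cM))$, is exactly the right one and isolates precisely why the hypothesis need only be imposed over $V(\rho(\cM))$. The one step you flag as a ``secondary technical burden'', namely $\min_\nu \operatorname{ord}_s f_{i_0}^*(\phi^*m_\nu)=a_{i_0}$, deserves one more line in the write-up: each $\phi^*m_\nu$ lies in $N$, so its $f_{i_0}$-coordinate has order $\ge a_{i_0}$; conversely $s^{a_{i_0}}f_{i_0}\in N$ is an $\cO_1$-combination of the $\phi^*m_\nu$, and applying $f_{i_0}^*$ shows the ideal they generate contains $s^{a_{i_0}}$, giving the reverse inequality. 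With that sentence added the argument is complete; the degenerate case $\cM(x)=\bC^p$ via Nakayama is also handled correctly.
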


In \cite{G3} there is another way to make a link between the integral closure of modules and ideals, using minors of a matrix of generators of $\cM$. 

Let $\cM$ be a sheaf of submodules of $\cOXp$, and $[\cM]$ a matrix of generators of $\cM$. For each $k$, let $J_k(\cM)$ denote the ideal of $\cO_X$ generated by the $k\times k$ minors of $[\cM]$. If $h\in\cOXp$, let $(h,\cM)$ be the submodule generated by $h$ and $\cM$.

\begin{proposition}[\cite{G3}, Corollary 1.8]\label{proposition G}
Suppose $(X,x)$ is a complex analytic germ with irreducible components $\{V_i\}$. Then, $h\in\overline{\cM}$ at $x$ if, and only if, $J_{k_i}((h,\cM_i))\subseteq \overline {J_{k_i}(\cM_i)}$ at $x$, where $\cM_i$ is the submodule of $\cO_{V_i,x}^p$ induced from $\cM$ and $k_i$ is the generic rank of $(h,\cM_i)$ on $V_i$. 
\end{proposition}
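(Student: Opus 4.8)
The plan is to use the curve criterion for integral closure of modules to transport the statement to a computation over the discrete valuation ring $\bC\{t\}$, where it becomes a fact of linear algebra coming from the Smith normal form. First I would reduce to the case in which $X$ is irreducible and reduced: every analytic arc $\phi\colon(\bC,0)\to(X,x)$ has image inside a single component $V_i$, so by the curve criterion $h\in\overline{\cM}$ is equivalent to $h|_{V_i}\in\overline{\cM_i}$ for every $i$; since the condition $J_{k_i}((h,\cM_i))\subseteq\overline{J_{k_i}(\cM_i)}$ is already intrinsic to $V_i$, it suffices to prove the proposition when $X$ is irreducible and reduced. From now on assume this, write $r$ for the generic rank of $\cM$ and $k$ for the generic rank of $(h,\cM)$, so $k\in\{r,r+1\}$, and the goal is $h\in\overline{\cM}\iff J_k((h,\cM))\subseteq\overline{J_k(\cM)}$.

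In the case $k=r+1$ both sides fail, so the equivalence is vacuous. Indeed, $\cM$ has rank $r$ on the irreducible reduced $X$, so $J_{r+1}(\cM)=0$ and hence $\overline{J_{r+1}(\cM)}=0$, while some $(r+1)\times(r+1)$ minor of a generating matrix of $(h,\cM)$ is nonzero; thus the right-hand inclusion fails. And if $h$ were in $\overline{\cM}$ then $(h,\cM)\subseteq\overline{\cM}$, which has the same generic rank $r$ as $\cM$ (localize at the generic point of $X$, where $\cO_X$ is a field and integral closure is trivial), contradicting $k=r+1$; so the left side fails too. From here on $k=r$.

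Now suppose $k=r$. For the forward direction, assume $h\in\overline{\cM}$ and let $g$ be any of the $r\times r$ minors generating $J_r((h,\cM))$; by the (Lejeune--Teissier) curve criterion for ideals it is enough to show, for every arc $\phi$, that $g\circ\phi\in\overline{\phi^*J_r(\cM)}=\phi^*J_r(\cM)$, the equality holding because $\bC\{t\}$ is a one-dimensional normal domain. But $h\in\overline{\cM}$ gives $h\circ\phi\in\overline{\phi^*\cM}=\phi^*\cM$, using the classical fact that a submodule of a free module over a DVR is integrally closed; hence $\phi^*(h,\cM)=\phi^*\cM$, and since forming the ideal of $r\times r$ minors of a generating matrix commutes with pullback along $\phi$, we get $\phi^*J_r((h,\cM))=\phi^*J_r(\cM)\ni g\circ\phi$. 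For the converse, assume $J_r((h,\cM))\subseteq\overline{J_r(\cM)}$ and fix an arc $\phi$; pulling back, $\phi^*J_r((h,\cM))\subseteq\overline{\phi^*J_r(\cM)}=\phi^*J_r(\cM)$, and the reverse inclusion is automatic, so $J_r(\phi^*(h,\cM))=J_r(\phi^*\cM)$. Both $\phi^*\cM$ and $\phi^*(h,\cM)$ have rank exactly $r$ over $\bC\{t\}$: the rank of $\phi^*\cM$ is at least $r$ (the fibre of $\cM$ at the generic point of the image arc has dimension at least the generic rank $r$), while the rank of $\phi^*(h,\cM)$ is at most $r$ since $J_{r+1}((h,\cM))=0$ pulls back to $J_{r+1}(\phi^*(h,\cM))=0$; as $\phi^*\cM\subseteq\phi^*(h,\cM)$, both ranks are squeezed to $r$. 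Then I invoke the DVR lemma: if $N\subseteq N'$ are submodules of a free $\bC\{t\}$-module of the same rank $r$ with $J_r(N)=J_r(N')$, then $N=N'$ --- to prove it, put $N'$ in Smith normal form $\bigoplus_{i=1}^r\bC\{t\}\,d_ie_i$, write a generating matrix of $N$ as $\mathrm{diag}(d_1,\dots,d_r)\cdot A$, observe that $J_r(N)=(d_1\cdots d_r)\,J_r(A)$, conclude $J_r(A)=\bC\{t\}$, so $A$ has a unimodular $r\times r$ block and $N=N'$. Applying this to $N=\phi^*\cM\subseteq N'=\phi^*(h,\cM)$ gives $h\circ\phi\in\phi^*\cM$; as $\phi$ was arbitrary, $h\in\overline{\cM}$.

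The main obstacle is the passage through the DVR: one needs the curve criterion for module integral closure and the compatibility of minor ideals with pullback as standing tools, one must check that after pullback the modules really have rank $r$ (which, pleasantly, is forced for \emph{every} arc once $k=r$), and one must prove the Smith--normal--form lemma identifying two nested modules of equal rank with equal ideal of top minors. That lemma is the genuinely new linear-algebra input; the component bookkeeping and the trivial $k=r+1$ case are routine.
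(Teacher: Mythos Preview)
The paper does not supply a proof of this proposition: it is quoted as background material from \cite{G3}, Corollary 1.8, with no argument given. So there is nothing in the present paper to compare your proof against; one would have to go to \cite{G3} for the original argument, which proceeds via $\mathrm{Projan}$ of the Rees algebra rather than directly through the curve criterion.

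That said, your argument has a genuine gap in the converse direction of the case $k=r$. You claim that for every arc $\phi$ the pullback $\phi^{*}\cM$ has rank at least $r$, justified by ``the fibre of $\cM$ at the generic point of the image arc has dimension at least the generic rank $r$.'' This is false. The rank of $\phi^{*}\cM$ over $\bC\{t\}$ equals the rank of the matrix $[\cM]$ at the generic point of the image curve, and this can be strictly less than $r$ whenever the image of $\phi$ lies inside the degeneracy locus $\Sigma(\cM)=\{x:\mathrm{rank}\,[\cM](x)<r\}$. (You may be conflating the coherent-sheaf fibre $\cM\otimes k(\eta)$, whose dimension is upper semicontinuous, with the rank of the image in $k(\eta)^{p}$, which is lower semicontinuous.) For such arcs both $J_{r}(\phi^{*}\cM)$ and $J_{r}(\phi^{*}(h,\cM))$ can vanish identically, so the equality of $r$-th minor ideals carries no information and your Smith-normal-form lemma does not apply. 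A concrete instance: $X=\bC^{2}$, $\cM=\langle(1,0),(0,z_{1})\rangle\subset\cO_{X}^{2}$ with $r=2$, and the arc $\phi(t)=(0,t)$ gives $\phi^{*}\cM=\bC\{t\}\cdot(1,0)$ of rank $1$.

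The rest of your proof is sound: the reduction to irreducible components, the vacuous $k=r+1$ case, the forward implication, and the DVR lemma itself are all correct. To repair the converse you must either argue that the curve criterion for $h\in\overline{\cM}$ can be checked on arcs not contained in $\Sigma(\cM)$ (this is true but needs justification, typically via the normalized $\mathrm{Projan}\,\cR(\cM)$ and its Rees valuations), or abandon the direct arc approach in favour of the $\mathrm{Projan}$/blow-up machinery as in \cite{G3} or the $\rho$-construction of \cite{GK} recalled just before the proposition.
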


\section{The double of a Module and basic properties}\label{sec1}

In this section we extend to modules the notion of the double of an ideal, getting some basic properties.

Let $X \subseteq \mathbb{C}^n$ be an analytic space, and let $\cM$ be an $\cO_X $-submodule of $\cO_X^p$. Consider the projection maps $\pi_1,\pi_2: X \times X \rightarrow X$. We assume that $\cM$ is finitely generated by global sections.

\begin{definition}
Let $h\in\cOXp$. The double of $h$ is defined as the element

\noindent $h_D:=(h\circ\pi_1,h\circ\pi_2) \in\cO_{X\times X}^{2p}$.

The double of $\cM$, denoted by $\cM_D$, is defined as the $\cO_{X\times X}$-submodule of $\cO_{X\times X}^{2p}$ generated by $\{h_D \mid  h\in \cM(X)\}$.

\end{definition}

We want to recover some results which are true in the ideal case, i.e, when $p=1$ and $\cM=\cI$ is an ideal sheaf. We start by obtaining a set of generators for $\cM_D$ from a set of generators of $\cM$.

Consider $z_1,...,z_n$ the coordinates on $\mathbb{C}^n$. 

\begin{lemma}\label{L1}
\begin{enumerate}
\item $(\alpha h)_D=-(0,(\alpha\circ\pi_1-\alpha\circ\pi_2)(h\circ\pi_2))+(\alpha\circ\pi_1)h_D$, for all $\alpha\in\cO_X$ and $h\in\cOXp$;

\item $(0,(\alpha\circ\pi_1-\alpha\circ\pi_2)(h\circ\pi_2))\in \cM_D$, for all $h\in \cM$ and $\alpha\in\cO_X$;

\item $\alpha\circ\pi_1-\alpha\circ\pi_2 \in I(\Delta(X))=(z_1\circ\pi_1-z_1\circ\pi_2,\ldots, z_n\circ\pi_1-z_n\circ\pi_2)$, for all $\alpha\in\cO_X$;

\item $(g+h)_D=g_D+h_D$, for all $g,h\in\cOXp$. 
\end{enumerate}
\end{lemma}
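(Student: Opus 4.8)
The plan is to verify each of the four items by direct computation with the maps $\pi_1,\pi_2\colon X\times X\to X$, using only the definition $h_D=(h\circ\pi_1,h\circ\pi_2)$ and the fact that pullback by $\pi_i$ is a ring homomorphism (so it is additive and multiplicative). Items (4) and (1) are purely formal: for (4) one writes $(g+h)_D=((g+h)\circ\pi_1,(g+h)\circ\pi_2)=(g\circ\pi_1+h\circ\pi_1,g\circ\pi_2+h\circ\pi_2)=g_D+h_D$. For (1), expand $(\alpha h)_D=((\alpha\circ\pi_1)(h\circ\pi_1),(\alpha\circ\pi_2)(h\circ\pi_2))$ and add and subtract $(\alpha\circ\pi_1)(h\circ\pi_2)$ in the second coordinate: the first coordinate is already $(\alpha\circ\pi_1)(h\circ\pi_1)$, and the second becomes $(\alpha\circ\pi_1)(h\circ\pi_2)-(\alpha\circ\pi_1-\alpha\circ\pi_2)(h\circ\pi_2)$, which rearranges exactly to $-(0,(\alpha\circ\pi_1-\alpha\circ\pi_2)(h\circ\pi_2))+(\alpha\circ\pi_1)h_D$.

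For item (3), the inclusion ``$\supseteq$'' of the ideal on the right into $I(\Delta(X))$ is clear since each $z_i\circ\pi_1-z_i\circ\pi_2$ vanishes on the diagonal $\Delta(X)=\{(x,x):x\in X\}$. For ``$\subseteq$'', given $\alpha\in\cO_X$, choose a representative $\tilde\alpha\in\cO_{\mathbb{C}^n}$; since $\mathbb{C}^n$ is smooth, Taylor expansion (or the Hadamard lemma) gives $\tilde\alpha(w)-\tilde\alpha(z)=\sum_{i=1}^n (w_i-z_i)\,g_i(z,w)$ for some analytic $g_i$ on $\mathbb{C}^n\times\mathbb{C}^n$. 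Pulling back along the inclusion $X\times X\hookrightarrow\mathbb{C}^n\times\mathbb{C}^n$ and identifying $z_i\circ\pi_1,z_i\circ\pi_2$ with the restrictions of the two sets of ambient coordinates yields $\alpha\circ\pi_1-\alpha\circ\pi_2=\sum_i (z_i\circ\pi_1-z_i\circ\pi_2)\,(g_i|_{X\times X})$, which lies in the stated ideal. (The value $\alpha\circ\pi_1-\alpha\circ\pi_2$ is independent of the choice of representative $\tilde\alpha$, since two representatives differ by something in the ideal of $X$, whose pullbacks under $\pi_1$ and $\pi_2$ both vanish on $X\times X$.)

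Item (2) then follows by combining (1) and (3): rearranging (1) gives $(0,(\alpha\circ\pi_1-\alpha\circ\pi_2)(h\circ\pi_2))=(\alpha\circ\pi_1)h_D-(\alpha h)_D$. If $h\in\cM$, then both $h_D$ and $(\alpha h)_D$ are generators of $\cM_D$ (the latter because $\alpha h\in\cM$, as $\cM$ is an $\cO_X$-module), so the right-hand side lies in $\cM_D$. Alternatively, and perhaps cleaner for reuse, one can use (3) to write $(0,(\alpha\circ\pi_1-\alpha\circ\pi_2)(h\circ\pi_2))$ as an $\cO_{X\times X}$-combination of the elements $(0,(z_i\circ\pi_1-z_i\circ\pi_2)(h\circ\pi_2))$, each of which equals $(z_i\circ\pi_1)(h_D)-(z_i h)_D$ and hence lies in $\cM_D$ by the same reasoning. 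None of these steps presents a real obstacle; the only point requiring a little care is item (3), where one must be explicit that the Hadamard-type decomposition on the ambient smooth space descends to $X\times X$ and is independent of the chosen representative — this is the step I would write out most carefully, while treating (1), (2), (4) as short formal verifications.
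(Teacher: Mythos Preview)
Your proof is correct and follows essentially the same approach as the paper for items (1), (2), and (4): direct computation for (1) and (4), and for (2) the rearrangement $(0,(\alpha\circ\pi_1-\alpha\circ\pi_2)(h\circ\pi_2))=(\alpha\circ\pi_1)h_D-(\alpha h)_D$ together with $\alpha h\in\cM$. For item (3) the paper simply notes that $\alpha\circ\pi_1-\alpha\circ\pi_2$ vanishes on $\Delta(X)$ and takes the description $I(\Delta(X))=(z_i\circ\pi_1-z_i\circ\pi_2)$ as known, whereas you supply the explicit Hadamard decomposition on the ambient $\mathbb{C}^n$ and descend it to $X\times X$; this is a harmless elaboration rather than a different idea.
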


\begin{proof}
(1) We have: $(\alpha h)_D=((\alpha\circ\pi_1)(h\circ\pi_1),(\alpha\circ\pi_2)(h\circ\pi_2))\newline=-(0_{\cO_{X\times X}^{p}},(\alpha\circ\pi_1-\alpha\circ\pi_2)(h\circ\pi_2))+(\alpha\circ\pi_1)h_D$.

(2) Since $h\in \cM$ then $\alpha h\in \cM$, so $h_D\in \cM_D$ and $(\alpha h)_D\in \cM_D$. Thus, by (a) we have that $(0_{\cO_{X\times X}^{p}},(\alpha\circ\pi_1-\alpha\circ\pi_2)(h\circ\pi_2))=(\alpha\circ\pi_1)h_D - (\alpha h)_D \in \cM_D$.

(3) Obviously $\alpha\circ\pi_1-\alpha\circ\pi_2$ vanishes on the diagonal of $X$.

(4) Notice that: $(g+h)_D=((g+h)\circ\pi_1,(g+h)\circ\pi_2)=(g\circ\pi_1+h\circ\pi_1,g\circ\pi_2+h\circ\pi_2)=(g\circ\pi_1,g\circ\pi_2)+(h\circ\pi_1,h\circ\pi_2)$.
\end{proof}

The next proposition gives a set of generators of $M_D$, from a known set of generators of $\cM$.

\begin{proposition}\label{P2}
Suppose that $\cM$ is generated by global sections \\ $\{h_1,\ldots,h_r \}$. Then, the following sets are generators of $\cM_D$:
\begin{enumerate}

\item $\cB=\{(h_1)_D,\ldots,(h_r)_D\} \cup \{(0_{\cO_{X\times X}^{p}},(z_i\circ\pi_1-z_i\circ\pi_2)(h_j\circ\pi_2))$ $|$ $i\in\{1,\ldots,n\}$ and $j\in\{1,\ldots,r\}\}$.

\item $\cB'=\{(h_1)_D,\ldots,(h_r)_D\} \cup \{((z_i\circ\pi_1-z_i\circ\pi_2)(h_j\circ\pi_1),0_{\cO_{X\times X}^{p}}))$ $|$ $i\in\{1,\ldots,n\}$ and $j\in\{1,\ldots,r\}\}$.

\item $\cB''=\{(h_1)_D,\ldots,(h_r)_D\} \cup \{(z_ih_j)_D$ $|$ $i\in\{1,\ldots,n\}$ and $j\in\{1,\ldots,r\}\}$.

\end{enumerate}

\end{proposition}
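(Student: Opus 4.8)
The plan is to show that each of the three sets generates the same submodule $\cM_D \subseteq \cO_{X\times X}^{2p}$, by proving two inclusions in each case: every listed generator lies in $\cM_D$, and conversely $\cM_D$ is contained in the submodule generated by the listed set. For the forward inclusion one only needs that each proposed generator is of the form $f_D$ with $f \in \cM$, or is an $\cO_{X\times X}$-combination of such. For $\cB''$ this is immediate since $z_i h_j \in \cM$, so $(z_i h_j)_D \in \cM_D$ by definition. For $\cB$, the elements $(0,(z_i\circ\pi_1 - z_i\circ\pi_2)(h_j\circ\pi_2))$ lie in $\cM_D$ by Lemma \ref{L1}(2) applied with $\alpha = z_i$ and $h = h_j$; for $\cB'$ the analogous statement follows from the symmetric version of Lemma \ref{L1}(2) (swapping the roles of $\pi_1$ and $\pi_2$, which is the same argument since $\cM_D$ is defined symmetrically in the two projections).

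The substantive direction is that $\cM_D$ is generated by each of these sets, i.e. that an arbitrary $h_D$ with $h \in \cM$ lies in the submodule generated by $\cB$ (resp.\ $\cB'$, $\cB''$). Write $h = \sum_{j=1}^r \alpha_j h_j$ with $\alpha_j \in \cO_X$. By Lemma \ref{L1}(4), $h_D = \sum_j (\alpha_j h_j)_D$, so it suffices to handle a single term $(\alpha h_j)_D$. Now apply Lemma \ref{L1}(1):
\[
(\alpha h_j)_D = -\bigl(0,\, (\alpha\circ\pi_1 - \alpha\circ\pi_2)(h_j\circ\pi_2)\bigr) + (\alpha\circ\pi_1)\,(h_j)_D .
\]
The second summand is $(\alpha\circ\pi_1)\cdot(h_j)_D$, an $\cO_{X\times X}$-multiple of a generator already in $\cB$. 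For the first summand, use Lemma \ref{L1}(3): $\alpha\circ\pi_1 - \alpha\circ\pi_2 \in I(\Delta(X)) = (z_1\circ\pi_1 - z_1\circ\pi_2, \ldots, z_n\circ\pi_1 - z_n\circ\pi_2)$, so we may write $\alpha\circ\pi_1 - \alpha\circ\pi_2 = \sum_{i=1}^n \beta_i\,(z_i\circ\pi_1 - z_i\circ\pi_2)$ for some $\beta_i \in \cO_{X\times X}$. Substituting,
\[
\bigl(0,\,(\alpha\circ\pi_1-\alpha\circ\pi_2)(h_j\circ\pi_2)\bigr) = \sum_{i=1}^n \beta_i\bigl(0,\,(z_i\circ\pi_1 - z_i\circ\pi_2)(h_j\circ\pi_2)\bigr),
\]
which exhibits it as an $\cO_{X\times X}$-combination of generators in $\cB$. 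This proves $\cB$ generates $\cM_D$.

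For $\cB'$, run the same computation with the roles of $\pi_1$ and $\pi_2$ interchanged, i.e.\ use the mirror identity $(\alpha h_j)_D = -\bigl((\alpha\circ\pi_2 - \alpha\circ\pi_1)(h_j\circ\pi_1),\,0\bigr) + (\alpha\circ\pi_2)(h_j)_D$ together with Lemma \ref{L1}(3). For $\cB''$, observe that $(z_i h_j)_D$ and $(0,(z_i\circ\pi_1 - z_i\circ\pi_2)(h_j\circ\pi_2))$ generate the same module modulo the $(h_j)_D$: by Lemma \ref{L1}(1) with $\alpha = z_i$, $h = h_j$ we have $(0,(z_i\circ\pi_1 - z_i\circ\pi_2)(h_j\circ\pi_2)) = (z_i\circ\pi_1)(h_j)_D - (z_ih_j)_D$, so over the ring $\cO_{X\times X}$ the span of $\{(h_j)_D\} \cup \{(z_ih_j)_D\}$ equals the span of $\{(h_j)_D\} \cup \{(0,(z_i\circ\pi_1 - z_i\circ\pi_2)(h_j\circ\pi_2))\} = \cB$, and we are done by the first part. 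I expect no real obstacle here; the only point requiring a little care is the bookkeeping with the two projections for $\cB'$ and making sure the membership $\alpha\circ\pi_1 - \alpha\circ\pi_2 \in I(\Delta(X))$ is applied with coefficients in $\cO_{X\times X}$ (not $\cO_X$), which is exactly what Lemma \ref{L1}(3) supplies.
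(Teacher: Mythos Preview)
Your proof is correct and follows essentially the same approach as the paper: both use Lemma \ref{L1}(1),(3),(4) to express $h_D$ for $h=\sum_j\alpha_j h_j$ as an $\cO_{X\times X}$-combination of the $(h_j)_D$ and the elements $(0,(z_i\circ\pi_1-z_i\circ\pi_2)(h_j\circ\pi_2))$, handle $\cB'$ by the symmetric argument, and treat $\cB''$ by rewriting $(0,(z_i\circ\pi_1-z_i\circ\pi_2)(h_j\circ\pi_2))=(z_i\circ\pi_1)(h_j)_D-(z_ih_j)_D$ to see that $\cB$ and $\cB''$ span the same module.
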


\begin{proof}
(1) Let $N$ be the submodule of $\cO_{X\times X}^{2p}$ generated by $\cB$. By Lemma \ref{L1} (b) we have that $N\subseteq \cM_D$. Now, to verify that $\cM_D\subseteq N$ it is enough to check that $h_D\in N, \forall h\in \cM$. Indeed, if $h\in \cM$ we can write $h= \sum\limits_{j=1}^{r} \alpha_j h_j $, for some $\alpha_j \in \cO_X$. By Lemma \ref{L1} (a) and (d) we have that $$h_D= \left( \sum\limits_{j=1}^{r} (\alpha_{j}\circ\pi_1)(h_j)_D  \right) - \left(\sum\limits_{j=1} ^{r} (0_{\cO_{X\times X}^{p}},(\alpha_j\circ\pi_1-\alpha_j\circ\pi_2)(h_j\circ\pi_2)) \right)$$ Clearly the first sum is in $N$. By Lemma \ref{L1} (c) we have that each $\alpha_j\circ\pi_1-\alpha_j\circ\pi_2$ belongs to the ideal $I(\Delta(X))$, so the second sum is in $N$.

(2) This is completely analogous to item (1). 

(3) We use (1). Let $N$ be the submodule of $\cO_{X\times X}^{2p}$ generated by $\cB''$. For all $j\in\{1,...,r\}$ and $i\in\{1,...,n\}$ we have $$(z_ih_j)_D=(z_i\circ\pi_1)(h_j)_D - (0_{\cO_{X\times X}^{p}},(z_i\circ\pi_1-z_i\circ\pi_2)(h_j\circ\pi_2))\in M_D,$$ by previous lemma. Hence, $N\subseteq \cM_D$. Now, to check that $\cM_D\subseteq N$, it is enough to verify that all the generators of $\cM_D$ given in (1) are in $N$. We already have $(h_j)_D\in N$, for all $j\in\{1,...,r\}$. Also, for all $j$ and $i$ we have $$(0_{\cO_{X\times X}^{p}},(z_i\circ\pi_1-z_i\circ\pi_2)(h_j\circ\pi_2))=(z_i\circ\pi_1)(h_j)_D-(z_ih_j)_D\in N.$$
\end{proof}

We can develop the notion of the double in the family case. Suppose that $\cX\subseteq \mathbb{C}^{n+k}$ is an analytic space and let $Y=0\times \bC^k\subseteq \cX$. Identifying $Y=0\times \bC^k=\bC^k$ we have that $\cX\subseteq \bC^{n}\times Y$. Let $p:\cX\subseteq \bC^{n}\times Y \longrightarrow Y$ be the projection, $\cX\underset{Y}{\times}\cX$ the fibered product, with the projections maps $\pi_1,\pi_2: \cX\underset{Y}{\times}\cX \rightarrow \cX$.

Let $h\in\cO_{\cX}^p$. The double of $h$ relative to $Y$ is defined by $$h_{D,Y}:=h_D:=(h\circ\pi_1,h\circ\pi_2)\in \cO_{\cX\underset{Y}{\times}\cX}^{2p}.$$

The double of a submodule $\cM$ of $\cO_{\cX}^p$ relative to $Y$ is defined as the $\cO_{\cX\underset{Y}{\times}\cX}$-submodule of $\cO_{\cX\underset{Y}{\times}\cX}^{2p}$ generated by $\{h_D  \mid h\in \cM(\cX)\}$, and is denoted by $\cM_D$ (or $\cM_{D,Y}$).

Let $z_1,\ldots,z_{n},y_1,\ldots,y_k$ be the coordinates on $\bC^{n+k}$. It is easy to see that Lemma \ref{L1} still holds when we are working with the projections restricted to the fibered product $\cX\underset{Y}{\times}\cX$, and since each $y_{\ell}\circ\pi_1-y_{\ell}\circ\pi_2$ vanishes on the fibered product, then we get the following analogous proposition.

\begin{proposition}\label{P3}
Suppose that $\cM$ is generated by $\{h_1,\ldots,h_r \}$. Then, the following sets are generators of $\cM_D$ relative to $Y$:
\begin{enumerate}

\item $\cB=\{(h_1)_D,\ldots,(h_r)_D\} \cup \{(0_{\cO_{X\times X}^{p}},(z_i\circ\pi_1-z_i\circ\pi_2)(h_j\circ\pi_2))$ $|$ $i\in\{1,\ldots,n\}$ and $j\in\{1,\ldots,r\}\}$.

\item $\cB'=\{(h_1)_D,\ldots,(h_r)_D\} \cup \{((z_i\circ\pi_1-z_i\circ\pi_2)(h_j\circ\pi_1),0_{\cO_{X\times X}^{p}}))$ $|$ $i\in\{1,\ldots,n\}$ and $j\in\{1,\ldots,r\}\}$.

\item $\cB''=\{(h_1)_D,\ldots,(h_r)_D\} \cup \{(z_ih_j)_D$ $|$ $i\in\{1,\ldots,n\}$ and $j\in\{1,\ldots,r\}\}$.

\end{enumerate}

\end{proposition}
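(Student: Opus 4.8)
The plan is to run the argument of Proposition \ref{P2} essentially verbatim, the only change being that the ideal playing the role of $I(\Delta(X))$ is now generated by the differences of the $z$-coordinates \emph{alone}. First I would record the relative version of Lemma \ref{L1}. Parts (1), (2) and (4) carry over word for word, since their proofs only manipulate $\pi_1^*$, $\pi_2^*$ and the componentwise structure of $\cO^{2p}$, and none of this is affected by restricting the projections to $\cX\underset{Y}{\times}\cX$. For part (3) I would lift $\alpha\in\cO_{\cX}$ to a germ $\tilde\alpha\in\cO_{\bC^{n+k},0}$ and use the standard expansion $\tilde\alpha(u)-\tilde\alpha(v)=\sum_{i}(u_i-v_i)g_i(u,v)+\sum_{\ell}(u_{n+\ell}-v_{n+\ell})\tilde g_\ell(u,v)$ on $\bC^{n+k}\times\bC^{n+k}$; restricting to $\cX\underset{Y}{\times}\cX$ kills each term $y_\ell\circ\pi_1-y_\ell\circ\pi_2$, since that is precisely what defines the fibered product, leaving
\[
\alpha\circ\pi_1-\alpha\circ\pi_2\in(z_1\circ\pi_1-z_1\circ\pi_2,\ldots,z_n\circ\pi_1-z_n\circ\pi_2)\subseteq\cO_{\cX\underset{Y}{\times}\cX}.
\]

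With this in hand the three assertions follow exactly as before. For (1): let $N$ be the submodule generated by $\cB$; the inclusion $N\subseteq\cM_D$ is immediate from the relative Lemma \ref{L1}(2), and for $\cM_D\subseteq N$ it suffices to show $h_D\in N$ for $h\in\cM$. Writing $h=\sum_j\alpha_jh_j$ and applying the relative Lemma \ref{L1}(1) and (4) gives $h_D=\sum_j(\alpha_j\circ\pi_1)(h_j)_D-\sum_j(0,(\alpha_j\circ\pi_1-\alpha_j\circ\pi_2)(h_j\circ\pi_2))$; the first sum lies in $N$ trivially, and each summand of the second lies in $N$ because $\alpha_j\circ\pi_1-\alpha_j\circ\pi_2$ is in the ideal generated by the $z_i\circ\pi_1-z_i\circ\pi_2$ by the relative part (3). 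Item (2) is the mirror-image argument using $\cB'$ and the ``$\pi_1$-side'' of Lemma \ref{L1}(1), and item (3) is deduced from (1) exactly as in Proposition \ref{P2}, by rewriting $(z_ih_j)_D=(z_i\circ\pi_1)(h_j)_D-(0,(z_i\circ\pi_1-z_i\circ\pi_2)(h_j\circ\pi_2))$ and, conversely, $(0,(z_i\circ\pi_1-z_i\circ\pi_2)(h_j\circ\pi_2))=(z_i\circ\pi_1)(h_j)_D-(z_ih_j)_D$.

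I expect no real obstacle here. The only point that deserves care — and the only place where the family structure genuinely enters — is the relative version of Lemma \ref{L1}(3): one must check that on $\cX\underset{Y}{\times}\cX$ the differences $y_\ell\circ\pi_1-y_\ell\circ\pi_2$ vanish identically and therefore drop out of the Taylor-type expansion, so that the $n$ elements $z_i\circ\pi_1-z_i\circ\pi_2$ already generate the relevant ideal. Once this is settled, the rest is a transcription of the absolute case.
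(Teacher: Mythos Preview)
Your proposal is correct and follows exactly the approach the paper takes: the paper simply remarks that Lemma \ref{L1} still holds on $\cX\underset{Y}{\times}\cX$ and that each $y_\ell\circ\pi_1-y_\ell\circ\pi_2$ vanishes on the fibered product, then declares Proposition \ref{P3} analogous to Proposition \ref{P2}. Your write-up is in fact more detailed than the paper's, spelling out the Taylor-type expansion behind the relative version of Lemma \ref{L1}(3).
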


In the next proposition we compute the generic rank of the double of a module.
\newpage
\begin{proposition}\label{T2.9}
	Let $(X,x)$ be an irreducible analytic complex germ of dimension $d\ge 1$, and $\cM\subseteq\cO_{X}^p$ a submodule of generic rank $k$. Then $\cM_D$ has generic rank $2k$. 
\end{proposition}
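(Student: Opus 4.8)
The plan is to work generically on $X\times X$ (resp.\ $\cX\underset{Y}{\times}\cX$), where the blowup-type subtleties along the diagonal disappear, and to use the generators $\cB$ of $\cM_D$ produced by Proposition~\ref{P2}(1) (or Proposition~\ref{P3}(1) in the family case). Concretely, if $[\cM]$ is a $p\times r$ matrix of generators $h_1,\dots,h_r$ of $\cM$, then in the coordinates on $X\times X$ the set $\cB$ gives a matrix of generators of $\cM_D$ of the shape
\[
[\cM_D]=\begin{pmatrix} h_1\circ\pi_1 & \cdots & h_r\circ\pi_1 & 0 & \cdots & 0\\[2pt] h_1\circ\pi_2 & \cdots & h_r\circ\pi_2 & (z_i\circ\pi_1-z_i\circ\pi_2)(h_j\circ\pi_2) \end{pmatrix},
\]
a $2p\times(r+nr)$ matrix. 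The generic rank of $\cM_D$ is, by definition, the rank of this matrix at a generic point of $X\times X$; since $X$ is irreducible of dimension $d\ge 1$, $X\times X$ is irreducible of dimension $2d$, and a generic point is of the form $(x_1,x_2)$ with $x_1,x_2$ generic \emph{distinct} smooth points of $X$.

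The key computation is then linear algebra over the function field. Using Proposition~\ref{P2.13} (the stalk of $\cM_D$ at $(x,x')$ with $x\ne x'$ is $\cM_x\oplus\cM_{x'}$) — or, if one prefers to avoid invoking it and argue directly — one shows that at a generic distinct pair the columns of $[\cM_D]$ span a subspace of $\bC^{2p}=\bC^p\oplus\bC^p$ that projects onto each summand with image of rank $k$, and in fact fills out the full $2k$-dimensional product. The mechanism: the first $r$ columns $(h_j\circ\pi_1,h_j\circ\pi_2)$ already give rank $k$ after projecting to the first factor. To decouple the two factors, note that for fixed generic $x_2$ the coefficients $z_i\circ\pi_1-z_i\circ\pi_2$ (as functions of $x_1$) are $n$ independent linear-ish functions vanishing only on a proper subset, so the extra columns $(0,(z_i\circ\pi_1-z_i\circ\pi_2)(h_j\circ\pi_2))$ span, over the local ring at a generic point, all of $0\oplus\cM_{x_2}$, which has rank $k$. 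Combining, the column space contains $\cM_{x_1}\oplus 0$ (obtained from the first block minus the second-factor part, which lies in $0\oplus\cM_{x_2}$) and $0\oplus\cM_{x_2}$, hence equals $\cM_{x_1}\oplus\cM_{x_2}$, of rank $2k$. An upper bound of $2k$ is immediate since every generator has each of its two $\bC^p$-components in the span of $h_1,\dots,h_r$ evaluated at the respective point.

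The main obstacle I anticipate is making precise the claim that the "difference coordinate" columns generate the \emph{entire} second-factor copy $0\oplus\cM_{x_2}$ at a generic point, rather than just a nonzero submodule — i.e.\ ruling out a pathological drop in rank caused by the vanishing locus of the $z_i\circ\pi_1-z_i\circ\pi_2$ or by $X$ being tangent to a coordinate hyperplane in a bad way. This is handled by observing that the ideal $(z_1\circ\pi_1-z_1\circ\pi_2,\dots,z_n\circ\pi_1-z_n\circ\pi_2)$ is exactly $I(\Delta(X))$ by Lemma~\ref{L1}(3), whose zero set is the diagonal, a proper analytic subset of the irreducible space $X\times X$; hence at a generic point not on the diagonal the relevant $n\times 1$ vector of scalars is nonzero, and after also invoking genericity in $x_1$ one gets that the span of $\{(z_i\circ\pi_1-z_i\circ\pi_2)(x_1)\cdot(h_j\circ\pi_2)(x_2)\}$ over $i,j$ is all of $\langle h_1(x_2),\dots,h_r(x_2)\rangle = \cM_{x_2}\otimes\bC$. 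The family case over $Y$ is identical, working on the irreducible $\cX\underset{Y}{\times}\cX$ and using that $y_\ell\circ\pi_1-y_\ell\circ\pi_2$ vanishes identically there, so only the $z_i$-differences contribute, exactly as in Proposition~\ref{P3}.
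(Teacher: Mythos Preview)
Your proposal is correct and follows essentially the same route as the paper: both arguments use the type~(1) generators of $\cM_D$ from Proposition~\ref{P2}, evaluate the resulting $2p\times(r+nr)$ matrix at a generic off-diagonal point $(x_1,x_2)\in (U\times U)\setminus\Delta$ where $U=\{\mathrm{rank}[\cM]=k\}$, observe that some $z_i\circ\pi_1-z_i\circ\pi_2$ is a nonzero scalar there so the lower-right block already contains a nonzero multiple of $[\cM(x_2)]$ (rank $k$), and then decouple the upper block to pick up the remaining rank $k$ from $[\cM(x_1)]$. The only cosmetic differences are that the paper phrases the decoupling as explicit column operations while you phrase it as a column-span argument, and that your optional appeal to Proposition~\ref{P2.13} is a forward reference in the paper's ordering (though not logically circular); your direct argument suffices and matches the paper, and note that you only need \emph{one} nonzero $z_i$-difference at the chosen point, not any independence among the $n$ of them.
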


\begin{proof}
	Let $\{h_1,...,h_r\}$ be a set of generators of $\cM$, $[\cM]$ the matrix whose columns are the $h_i$. Let $U$ be the Zariski open and dense subset of $X$ on which the rank of $[\cM]$ is $k$. We use the generators of $\cM$ to construct generators of $\cM_D$ of type 
	%specify type 1
$(1)$ in \ref{P2}, $[\cM_D]$ denoting the matrix of generators so constructed.
	
	Let $(x_1,x_2)\in U\times U$ off the diagonal. Since $x_1\neq x_2$, for some $i$, $z_i\circ\pi_1-z_i\circ\pi_2\neq 0$ at $(x_1,x_2)$. Then the matrix $[\cM_D(x_1,x_2)]$ has a lower right block which is a non-zero scalar multiple of $[\cM(x_2)]$. Using column operations we can reduce $[\cM_D(x_1,x_2)]$ to a matrix with a lower right block consisting of $p$ rows and $k$ columns of rank $k$ and the rest of the $p$ rows with zero entries. We can then use column operations again to reduce the first $p$ rows to another $p\times k$ block of rank $k$. The non-zero entries of the reduced matrix form a $2p\times 2k$ matrix made up of two blocks of rank $k$, with zeroes above and below them. Hence the reduced matrix has rank exactly $2k$. Since $U\times U-\Delta(U)$ is a Zariski open and dense subset of $X\times X$, the generic rank of $\cM_D$ is $2k$ at every point.
\end{proof}

It is easy to see that in the case when the dimension of $(X,x)$ is zero, the double of $\cM$ is isomorphic to $\cM$, therefore the generic rank does not change.

\begin{corollary}\label{C2.10}
	Let $\{V_i\}$ be the irreducible components of $(X,x)$. For each $i$, if $\cM$ has generic rank $k_i$ on $V_i$ then $\cM_D$ has generic rank $2k_i$ on $V_i\times V_i$. In particular, if $\cM$ has generic rank $k$ on each component of $X$ then $\cM_D$ has generic rank $2k$ on each component of $X\times X$.
\end{corollary}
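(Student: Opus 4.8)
The plan is to reduce Corollary \ref{C2.10} to Proposition \ref{T2.9} by localizing at each irreducible component. First I would recall that the irreducible components of $X \times X$ are exactly the products $V_i \times V_j$, where $\{V_i\}$ are the irreducible components of $X$; this is a standard fact about products of analytic germs over $\bC$. The generic rank of $\cM_D$ on an irreducible component is, by definition, its rank at a generic point of that component, so it suffices to compute the rank of $\cM_D$ at a generic point of each $V_i \times V_j$.

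Next I would treat the diagonal-type components $V_i \times V_i$. The submodule of $\cO_{V_i,x}^p$ induced from $\cM$ has generic rank $k_i$ by hypothesis, and one checks (as in the construction of generators of type $(1)$ in Proposition \ref{P2}) that the double of this induced module agrees, as a module over $\cO_{V_i \times V_i}$, with the module induced on $V_i \times V_i$ from $\cM_D$; indeed the generators of $\cM_D$ given in Proposition \ref{P2}(1), when restricted to $V_i \times V_i$, are precisely the generators one would write down for the double of the induced module on the irreducible germ $(V_i, x)$. Applying Proposition \ref{T2.9} to $(V_i,x)$ (which is irreducible of some dimension $d_i$; if $d_i \geq 1$ the proposition applies directly, and if $d_i = 0$ the remark following the proposition gives that the double is isomorphic to the module, so the generic rank stays $k_i = 2k_i$ only when $k_i$ is forced to behave trivially — here one simply notes a point germ contributes a single copy) yields generic rank $2k_i$ on $V_i \times V_i$. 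The ``in particular'' clause is then immediate: if all the $k_i$ are equal to a common $k$, then every component of the form $V_i \times V_i$ carries generic rank $2k$.

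For completeness I would remark on the off-diagonal components $V_i \times V_j$ with $i \neq j$, even though the statement only asserts the claim on components of the form $V_i \times V_i$: at a generic point $(x_1, x_2)$ of $V_i \times V_j$ one has $x_1 \neq x_2$ automatically, so the argument in the proof of Proposition \ref{T2.9} applies verbatim, showing $\cM_D$ has rank $k_i + k_j$ there. This is consistent with Proposition \ref{P2.13} (the stalk of $\cM_D$ at $(x,x')$ with $x \neq x'$ is the direct sum of the stalks), and when all $k_i = k$ gives rank $2k$ on those components too, so in fact $\cM_D$ has generic rank $2k$ on \emph{every} component of $X \times X$, matching the statement.

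The only genuine point requiring care — and the one I would flag as the main obstacle — is the identification, for each fixed $i$, of the module induced on $V_i \times V_i$ from $\cM_D$ with the double (in the sense of Section \ref{sec1}, taken intrinsically on the irreducible germ $(V_i,x)$) of the module induced on $V_i$ from $\cM$. This is where one must be careful that restriction and ``doubling'' commute: a set of generators $\{h_1, \dots, h_r\}$ of $\cM$ restricts to a generating set of the induced module on $V_i$, and by Proposition \ref{P2} the corresponding type-$(1)$ lists generate the respective doubles, so the two modules have the same generators after restriction and hence coincide. Once this bookkeeping is in place, the corollary follows formally from Proposition \ref{T2.9} applied componentwise.
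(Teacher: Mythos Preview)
Your proposal is correct and follows essentially the same approach the paper takes: the paper gives no explicit proof for this corollary, treating it as an immediate consequence of Proposition~\ref{T2.9} applied componentwise. You have simply spelled out the details the paper leaves tacit---that the irreducible components of $X\times X$ are the products $V_i\times V_j$, that restriction to $V_i\times V_i$ commutes with the doubling construction (via the common generating sets of Proposition~\ref{P2}), and that the off-diagonal components behave as expected---so your argument is a faithful expansion of what the paper intends.
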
 

Suppose the generic rank of $\cM$ is $k$; let $$\Sigma(\cM):=\{x\in X \mid rank[\cM(x)]<k\}.$$

In the next proposition we compute $\Sigma(\cM_D)$ in $\cO_{X\times X}^{2p}$.

\begin{proposition}\label{P4} Let $\cM$ be a sheaf of submodules of $\cOXp$ of generic rank $k$. Then
	$$\Sigma(\cM_D)= \Delta(X)\cup(X\times \Sigma(\cM))\cup(\Sigma(\cM)\times X).$$ 
\end{proposition}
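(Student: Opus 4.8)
The plan is to fix a matrix $[\cM_D]$ of generators of $\cM_D$ of type $(1)$ from Proposition \ref{P2}, built from a set $h_1,\dots,h_r$ of generators of $\cM$, and to compare $\mathrm{rank}[\cM_D(x,x')]$ with the generic rank $2k$ (Proposition \ref{T2.9}) at each point $(x,x')$. I will use repeatedly that at every point $\mathrm{rank}[\cM(y)]\le k$, with strict inequality exactly on $\Sigma(\cM)$, and likewise $\mathrm{rank}[\cM_D(x,x')]\le 2k$ everywhere. I assume $X$ has positive dimension along each component (so $k\ge 1$ and the generic rank of $\cM_D$ is genuinely $2k$); the zero-dimensional part is degenerate, since there $\cM_D\cong\cM$, and on it the statement needs an obvious modification.

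For $\supseteq$ I treat the three pieces separately. On $\Delta(X)$: each ``difference'' generator $(0,(z_i\circ\pi_1-z_i\circ\pi_2)(h_j\circ\pi_2))$ vanishes at a diagonal point $(x,x)$, so $[\cM_D(x,x)]$ has only the columns $(h_j(x),h_j(x))$ together with zero columns, whence $\mathrm{rank}[\cM_D(x,x)]=\mathrm{rank}[\cM(x)]\le k<2k$ and $(x,x)\in\Sigma(\cM_D)$. For $\Sigma(\cM)\times X$: split $[\cM_D(x,x')]$ into its top $p$ rows $A_1$ and its bottom $p$ rows $A_2$. The columns of $A_1$ are the $h_j(x)$ and zeros, so $\mathrm{rank}(A_1)=\mathrm{rank}[\cM(x)]$; the columns of $A_2$ are scalar multiples of the $h_j(x')$, so $\mathrm{rank}(A_2)\le\mathrm{rank}[\cM(x')]\le k$. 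Since the row space of $[\cM_D(x,x')]$ is spanned by those of $A_1$ and $A_2$, we get $\mathrm{rank}[\cM_D(x,x')]\le\mathrm{rank}(A_1)+\mathrm{rank}(A_2)$, which for $x\in\Sigma(\cM)$ is $\le(k-1)+k<2k$, so $(x,x')\in\Sigma(\cM_D)$. Finally $X\times\Sigma(\cM)\subseteq\Sigma(\cM_D)$ follows from the previous case because the factor-exchange involution of $X\times X$ swaps $\pi_1,\pi_2$ and hence carries $\cM_D$ to itself (up to swapping the two $\cO^p$-blocks), so $\Sigma(\cM_D)$ is symmetric; alternatively one repeats the computation with the type $(2)$ generators of Proposition \ref{P2}.

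For $\subseteq$ I show that a point $(x,x')$ outside the right-hand set has $\mathrm{rank}[\cM_D(x,x')]=2k$; this is the pointwise version of the reduction already carried out in the proof of Proposition \ref{T2.9}. Since $x\ne x'$ there is an index $i_0$ with $c:=z_{i_0}(x)-z_{i_0}(x')\ne 0$, and the corresponding difference generators $(0,c\,h_j(x'))$ span $\{0\}\times\mathrm{colspan}[\cM(x')]$, of dimension $\mathrm{rank}[\cM(x')]=k$ because $x'\notin\Sigma(\cM)$. Using these columns one clears the bottom block of each column $(h_j(x),h_j(x'))$, leaving the columns $(h_j(x),0)$, which span $\mathrm{colspan}[\cM(x)]\times\{0\}$ of dimension $k$ because $x\notin\Sigma(\cM)$; the remaining difference generators lie in $\{0\}\times\mathrm{colspan}[\cM(x')]$ and contribute nothing. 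Hence the reduced matrix has rank $k+k=2k$, and with the bound $\mathrm{rank}[\cM_D(x,x')]\le 2k$ this gives equality, i.e. $(x,x')\notin\Sigma(\cM_D)$.

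I do not expect a serious obstacle: this is elementary linear algebra once the type $(1)$ generating set is in place. The points that need care are the row-partition rank inequality used for $\Sigma(\cM)\times X$, the bookkeeping of the column reductions in the reverse inclusion, and the observation that the ``$\le 2k$'' bound — hence the whole comparison — rests on the identity (generic rank of $\cM_D$) $=2k$ from Proposition \ref{T2.9}, which is why the zero-dimensional components must be set aside.
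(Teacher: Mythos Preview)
Your proposal is correct and follows essentially the same approach as the paper: both directions rest on the column reduction already carried out in Proposition~\ref{T2.9}, and the diagonal case is handled identically. The only cosmetic difference is that for $\Sigma(\cM)\times X$ you use the row-partition rank inequality $\mathrm{rank}[\cM_D(x,x')]\le\mathrm{rank}(A_1)+\mathrm{rank}(A_2)$ directly, whereas the paper performs the column reduction explicitly to reach the same block form; your version is slightly cleaner and also covers the diagonal points of $\Sigma(\cM)\times X$ without needing to separate them out.
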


\begin{proof}
	By Proposition \ref{T2.9} we know $[\cM_D]$ has constant rank $2k$ on $(U\times U)-\Delta(U)$, $U:=X-\Sigma(\cM)$. Hence $\Sigma(\cM_D)$ lies in $\Delta(X)\cup(X\times \Sigma(\cM))\cup(\Sigma(\cM)\times X)$. 
	
	Suppose $(x_1,x_2)\in\Delta(X)$. Then $[\cM_D(x_1,x_2)]$ is a matrix of two identical $p\times n$ blocks, and the kernel vectors of the top block are also in the kernel of the bottom block, so the rank of $[\cM_D(x_1,x_2)]$ is equal to the rank of $[\cM(x_1)]$ which is $\ell\le k< 2k$. Thus, $(x_1,x_2)$ is in $\Sigma(\cM_D)$.
	
	Now suppose $(x_1,x_2)\in\Sigma(\cM)\times X$, $x_1\neq x_2$. Then the matrix $[\cM_D(x_1,x_2)]$ reduces as in the proof of Proposition \ref{T2.9} to a matrix with two blocks, the top left block of size $p\times k_1$, $k_1=\mbox{rank}[\cM(x_1)]$, $k_1<k$ and a bottom right block of size $p\times k_2$, of rank $k_2\le k$.   So the whole matrix has rank $k_1+k_2<2k$.
	
	A similar proof works in the case where $(x_1,x_2)\in X\times\Sigma(\cM)$.
\end{proof}
 
 It is easy to see this proposition still holds in the family case, by taking $X\times_{Y} \Sigma(\cM)$ and $\Sigma(\cM)\times_{Y}X$.
 
 The next proposition generalizes Corollary 3.4 of \cite{G1} for modules.
 
 \begin{proposition}\label{P10}
 Let $\cM\subseteq \cN\subseteq\overline{\cM}$ be $\cO_X$-submodules of $\cOXp$, with $X$ equidimensional. Suppose that $\cM_D$ has finite colength in $\cN_D$ and $\cN_D$ has finite colength in $(\overline{\cM})_D$. Then $$e(\cM_D,(\overline{\cM})_D)=e(\cN_D,(\overline{\cM})_D)\mbox{ if and only if } \overline{\cM_D}=\overline{\cN_D}.$$
 \end{proposition}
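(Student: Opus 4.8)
The plan is to reduce the proposition to two standard inputs from the theory of Buchsbaum--Rim multiplicities of modules: the additivity of the multiplicity of a pair of modules along a chain of inclusions, and the module version of Rees's theorem, which detects equality of integral closures by the vanishing of the relative multiplicity. The equidimensionality of $X$ is exactly what is needed here: it forces $X\times X$ to be equidimensional (of dimension $2\dim X$) and reduced, so that the module Rees theorem applies on $X\times X$. I would develop this following the module machinery of \cite{G3}, of which the ideal case used in Corollary 3.4 of \cite{G1} is the specialization $p=1$.

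First I would record the chain of inclusions. The operation $\cM\mapsto\cM_D$ is monotone: if $\cM\subseteq\cN$, the generators $h_D$ of $\cM_D$ with $h\in\cM$ are among the generators of $\cN_D$ described in Proposition \ref{P2}, so $\cM_D\subseteq\cN_D$; hence $\cM\subseteq\cN\subseteq\overline{\cM}$ gives $\cM_D\subseteq\cN_D\subseteq(\overline{\cM})_D$ inside $\cO_{X\times X}^{2p}$. From the short exact sequence $0\to\cN_D/\cM_D\to(\overline{\cM})_D/\cM_D\to(\overline{\cM})_D/\cN_D\to 0$ together with the two finite-colength hypotheses, $\cM_D$ also has finite colength in $(\overline{\cM})_D$, so the relative multiplicities $e(\cM_D,\cN_D)$, $e(\cN_D,(\overline{\cM})_D)$ and $e(\cM_D,(\overline{\cM})_D)$ are all defined. (One must use the genuinely relative multiplicity of a pair here: by Proposition \ref{P4} the cosupport of $\cM_D$ is the positive-dimensional set $\Delta(X)\cup(X\times\Sigma(\cM))\cup(\Sigma(\cM)\times X)$, so $\cM_D$ need not have finite colength in the free module $\cO_{X\times X}^{2p}$, and a naive difference $e(\cM_D)-e(\cN_D)$ makes no sense.) Now additivity along the chain gives $e(\cM_D,(\overline{\cM})_D)=e(\cM_D,\cN_D)+e(\cN_D,(\overline{\cM})_D)$; since each term is a non-negative integer and $e(\cN_D,(\overline{\cM})_D)$ is finite, $e(\cM_D,(\overline{\cM})_D)=e(\cN_D,(\overline{\cM})_D)$ holds if and only if $e(\cM_D,\cN_D)=0$. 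Finally, the module form of Rees's theorem, valid because $X\times X$ is equidimensional, says that for $\cM_D\subseteq\cN_D$ of finite colength, $e(\cM_D,\cN_D)=0$ if and only if $\overline{\cM_D}=\overline{\cN_D}$. Composing the two equivalences proves the proposition.

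The step I expect to require the most care is the multiplicity bookkeeping just sketched: one has to be sure that the relative Buchsbaum--Rim multiplicity of a pair of $\cO_{X\times X}$-modules --- with the smaller one only of finite colength in the larger, not in a free module --- is the right invariant, that it is additive along chains and non-negative, and that the Rees-type rigidity theorem holds in precisely this generality, the last point being where the equidimensionality and reducedness of $X\times X$, inherited from $X$, are indispensable. Once that framework is in place the argument is formal; the remaining verifications (monotonicity of doubling, the short exact sequence) are routine.
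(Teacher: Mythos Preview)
Your proposal is correct and follows essentially the same route as the paper: invoke additivity of the Buchsbaum--Rim multiplicity along the chain $\cM_D\subseteq\cN_D\subseteq(\overline{\cM})_D$ to reduce the equality to $e(\cM_D,\cN_D)=0$, then apply the module Rees theorem (both from \cite{KT}) to conclude $\overline{\cM_D}=\overline{\cN_D}$. Your version is in fact more careful than the paper's, explicitly verifying monotonicity of the double, the well-definedness of the three multiplicities, and that the equidimensionality needed for Rees is that of $X\times X$ rather than $X$.
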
 
 
 \begin{proof}
 By the principle of additivity \cite{KT}, we have that $$e(\cM_D,(\overline{\cM})_D)=e(\cM_D,\cN_D)+e(\cN_D,(\overline{\cM})_D).$$
 
 Notice that all these multiplicities are well-defined by hypothesis. So, $e(\cM_D,(\overline{\cM})_D)=e(\cN_D,(\overline{\cM})_D)$ if, and only if, $e(\cM_D,\cN_D)=0$, which is equivalent to the equality $\overline{\cM_D}=\overline{\cN_D}$, since $X$ is equidimensional (see \cite{KT}).
 \end{proof}
 
 The following proposition and corollary are useful to make a relation between the saturation and the double of a module, and to work with the infinitesimal Lipschitz conditions.
 
 \begin{proposition}\label{P12}
 Let $h\in\cOXp$. 
 \begin{enumerate}
 \item If $h_D\in\overline{\cM_D}$ at $(x,x')$ then $h\in\overline{\cM}$ at $x$ and $x'$. 
 \item If $h_D\in (\cM_D)^{\dagger}$ at $(x,x')$ then $h\in \cM^{\dagger}$ at $x$ and $x'$.
 \end{enumerate}
 The same result still holds in the family case.
 \end{proposition}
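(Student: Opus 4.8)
The plan is to reduce both statements, via the explicit generators of $\mathcal{M}_D$ furnished by Proposition~\ref{P2}, to the functoriality of integral closure of modules under $\mathcal{O}$-linear maps and under analytic pullback. Recall (from the curve criterion, see \cite{G3}) that if $\phi$ is an $\mathcal{O}$-linear map of free modules and $h\in\overline{\mathcal{N}}$, then $\phi(h)\in\overline{\phi(\mathcal{N})}$; and if $f$ is an analytic map and $h\in\overline{\mathcal{N}}$ at $f(z)$, then $f^{*}h\in\overline{f^{\sharp}\mathcal{N}}$ at $z$, where $f^{\sharp}\mathcal{N}$ denotes the submodule generated by the pullbacks $g\circ f$, $g\in\mathcal{N}$. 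The operation $(\cdot)^{\dagger}$ enjoys both of these properties for the same reason, so it suffices to treat (1); the proof of (2) is obtained by replacing $\overline{(\cdot)}$ by $(\cdot)^{\dagger}$ throughout.

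First I would project onto the two blocks of coordinates. Fix global generators $h_1,\dots,h_r$ of $\mathcal{M}$ and take the generating set $\mathcal{B}$ of $\mathcal{M}_D$ from Proposition~\ref{P2}(1). Let $q_1,q_2\colon\mathcal{O}_{X\times X}^{2p}\to\mathcal{O}_{X\times X}^{p}$ be the projections onto the first, respectively last, $p$ coordinates; these are $\mathcal{O}_{X\times X}$-linear. Inspecting the coordinates of the elements of $\mathcal{B}$ shows that $q_1(\mathcal{M}_D)=\pi_1^{\sharp}\mathcal{M}$ and $q_2(\mathcal{M}_D)=\pi_2^{\sharp}\mathcal{M}$: the doubles $(h_j)_D$ contribute $h_j\circ\pi_1$ (resp.\ $h_j\circ\pi_2$), whereas the auxiliary generators $(0,(z_i\circ\pi_1-z_i\circ\pi_2)(h_j\circ\pi_2))$ contribute $0$ under $q_1$ and an $\mathcal{O}_{X\times X}$-multiple of $h_j\circ\pi_2$ under $q_2$, hence nothing new. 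Applying $q_1$ and $q_2$ to the hypothesis $h_D\in\overline{\mathcal{M}_D}$ at $(x,x')$ then yields $h\circ\pi_1\in\overline{\pi_1^{\sharp}\mathcal{M}}$ and $h\circ\pi_2\in\overline{\pi_2^{\sharp}\mathcal{M}}$ at $(x,x')$.

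Next I would pull back along the slice embeddings $\iota_1\colon(X,x)\hookrightarrow(X\times X,(x,x'))$, $z\mapsto(z,x')$, and $\iota_2\colon(X,x')\hookrightarrow(X\times X,(x,x'))$, $z\mapsto(x,z)$, which satisfy $\pi_1\circ\iota_1=\mathrm{id}$ and $\pi_2\circ\iota_2=\mathrm{id}$. Then $\iota_1^{\sharp}(\pi_1^{\sharp}\mathcal{M})=\mathcal{M}$ and $\iota_1^{*}(h\circ\pi_1)=h$, so pulling back along $\iota_1$ gives $h\in\overline{\mathcal{M}}$ at $x$; symmetrically $\iota_2$ gives $h\in\overline{\mathcal{M}}$ at $x'$. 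This proves (1). In the family case one replaces $X\times X$ by $\mathcal{X}\underset{Y}{\times}\mathcal{X}$ and uses the generators of $\mathcal{M}_D$ from Proposition~\ref{P3}; the identities $q_i(\mathcal{M}_D)=\pi_i^{\sharp}\mathcal{M}$ hold for exactly the same reason, and $\iota_1,\iota_2$ are replaced by sections of $\pi_1$ and $\pi_2$ through $(x,x')$ (obtained from sections of $p\colon\mathcal{X}\to Y$ through $x$, resp.\ $x'$), after which the restriction argument is unchanged.

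I expect the delicate point to be the identifications $q_i(\mathcal{M}_D)=\pi_i^{\sharp}\mathcal{M}$. A priori $\mathcal{M}_D$ is generated by \emph{all} the doubles $h_D$, $h\in\mathcal{M}$, and it is not evident that the coordinate projections of this module are precisely the pullback modules; what makes this transparent is the finite, explicit generating set of Proposition~\ref{P2} (resp.\ \ref{P3}), whose generators have coordinates that manifestly lie in $\pi_i^{\sharp}\mathcal{M}$. The only additional subtlety, in the family case, is to ensure the projection $\pi_i\colon\mathcal{X}\underset{Y}{\times}\mathcal{X}\to\mathcal{X}$ admits a local section through $(x,x')$; alternatively one can bypass this by testing membership in $\overline{\mathcal{M}}$ with analytic arcs and lifting them along the surjective germ $\pi_i$ — always possible after a reparametrization by the curve selection lemma — which realises the same reduction directly.
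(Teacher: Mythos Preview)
Your proof is correct and follows essentially the same route as the paper. The paper applies the curve criterion directly: given an arbitrary arc $\phi:(\mathbb{C},0)\to(X,x)$, it lifts to $\gamma=(\phi,x')$, pulls back the hypothesis $h_D\in\overline{\mathcal{M}_D}$ along $\gamma$, and compares the first $p$ coordinates to obtain $h\circ\phi\in\mathcal{M}\circ\phi$; this is precisely your projection $q_1$ followed by restriction along the slice $\iota_1$, carried out at the level of curves rather than via module-level functoriality.
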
  
 \begin{proof}
(1) Let us prove that $h\in\overline{\cM}$ at $x$ (the case at $x'$ is completely analogous). Let $\phi:(\bC,0)\rightarrow(X,x)$ be an arbitrary analytic curve. Define $\gamma:(\bC,0)\rightarrow(X\times X,(x,x'))$ given by $\gamma(t)=(\phi(t),x')$. Since $h_D\in \overline{\cM_D}$ then $h_D\circ\gamma\in \cM_D\circ\gamma$, so we can write $$h_D\circ\gamma=\sum\alpha_j((g_j)_D\circ\gamma)$$ with $g_j\in \cM$ and $\alpha_j\in\cO_{\bC,0}$. Since $\pi_1\circ\gamma=\phi$, comparing the first $p$ coordinates of the above equation, we get $h\circ\phi=\sum\alpha_j(g_j\circ\phi)\in \cM\circ\phi$. Therefore, $h\in\overline{\cM}$ at $x$.

(2) The proof is completely analogous to item (a), working on the strict integral closure.
\end{proof}

The proof in the family case is also analogous, working on the fibered product $X\underset{Y}{\times}X$.

\begin{corollary}\label{C13}
Let $\cM$ and $N$ be $\cO_X$-submodules of $\cOXp$.
\begin{enumerate}
\item If $\cM_D\subseteq\overline{N_D}$ at $(x,x)$ then $\cM\subseteq\overline{\cN}$ at $x$;
\item If $\cM_D\subseteq(\cN_D)^{\dagger}$ at $(x,x)$ then $\cM\subseteq\ \cN^{\dagger}$ at $x$.
\end{enumerate}
The same result still holds in the family case.
\end{corollary}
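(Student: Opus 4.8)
The plan is to deduce Corollary \ref{C13} directly from Proposition \ref{P12} by applying the latter to each generator of $\cM$. Concretely, suppose $\cM_D \subseteq \overline{N_D}$ at $(x,x)$, and let $\{h_1,\ldots,h_r\}$ be a set of generators of $\cM$. For each $j$ the double $(h_j)_D$ is one of the generators of $\cM_D$ listed in Proposition \ref{P2}(1), hence $(h_j)_D \in \cM_D \subseteq \overline{N_D}$ at $(x,x)$. Apply Proposition \ref{P12}(1) with the point $(x,x')=(x,x)$: we conclude $h_j \in \overline{N}$ at $x$ (and also at $x$, trivially the same statement). Since this holds for every generator $h_j$ and $\overline{N}$ is a module containing $N$, it follows that $\cM = \cO_X\langle h_1,\ldots,h_r\rangle \subseteq \overline{N}$ at $x$. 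This proves part (1). Part (2) is identical, replacing integral closure $\overline{\,\cdot\,}$ by the strict closure $(\,\cdot\,)^{\dagger}$ throughout and invoking Proposition \ref{P12}(2) instead of (1).

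The one point that needs a word of care is that $\overline{N}$ (resp.\ $N^{\dagger}$) is itself an $\cO_{X,x}$-submodule of $\cO_{X,x}^p$, so that knowing each generator $h_j$ of $\cM$ lies in it forces the entire submodule $\cM$ generated by the $h_j$ to lie in it; this is a standard property of integral closure of modules and of the strict closure, so it may be cited rather than reproved. The family case is handled the same way: one uses the generators of $\cM_D$ relative to $Y$ from Proposition \ref{P3}(1) and the family version of Proposition \ref{P12}, working over the fibered product $X\underset{Y}{\times}X$ instead of $X\times X$. No new idea is required.

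I do not anticipate a genuine obstacle here; the corollary is a routine ``generators suffice'' consequence of the pointwise statement in Proposition \ref{P12}. The only mild subtlety worth flagging in the writeup is that the hypothesis is stated at the \emph{diagonal} point $(x,x)$ rather than at a general pair $(x,x')$ with $x \neq x'$, which is exactly what makes both conclusions land at the single point $x$ (Proposition \ref{P12} applied at $(x,x)$ yields membership ``at $x$ and $x$,'' i.e.\ simply at $x$). So the proof is essentially: pass to generators, invoke Proposition \ref{P12} at $(x,x)$, and use that closures are modules.

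\begin{proof}
(1) Let $\{h_1,\ldots,h_r\}$ be a set of generators of $\cM$. By Proposition \ref{P2}(1), each $(h_j)_D$ is one of the generators of $\cM_D$, so $(h_j)_D \in \cM_D \subseteq \overline{N_D}$ at $(x,x)$. Applying Proposition \ref{P12}(1) at the point $(x,x)$, we obtain $h_j \in \overline{N}$ at $x$, for every $j \in \{1,\ldots,r\}$. Since $\overline{N}$ is an $\cO_{X,x}$-submodule of $\cO_{X,x}^p$, it contains the submodule generated by $h_1,\ldots,h_r$, namely $\cM$. Hence $\cM \subseteq \overline{N}$ at $x$.

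(2) This is completely analogous to item (1), replacing $\overline{\,\cdot\,}$ by $(\,\cdot\,)^{\dagger}$ and using Proposition \ref{P12}(2) in place of Proposition \ref{P12}(1); the strict closure $N^{\dagger}$ is also an $\cO_{X,x}$-submodule of $\cO_{X,x}^p$, so the same argument applies.

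In the family case one argues in exactly the same way, using the generators of $\cM_D$ relative to $Y$ from Proposition \ref{P3}(1) and the family version of Proposition \ref{P12}, working over the fibered product $X\underset{Y}{\times}X$.
\end{proof}
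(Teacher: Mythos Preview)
Your proposal is correct and is precisely the intended argument: the paper states the corollary without proof, treating it as an immediate consequence of Proposition~\ref{P12}, and your write-up simply makes that deduction explicit by applying Proposition~\ref{P12} at $(x,x)$ to each generator of $\cM$. One trivial remark: you do not really need Proposition~\ref{P2}(1) to know $(h_j)_D\in\cM_D$, since this holds directly from the definition of $\cM_D$; but this is harmless.
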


In next proposition we prove that the integral closure of modules commutes with finite direct sum of modules.

\begin{proposition}\label{P2.13}
	Let $\cM\subseteq\cO_X^p$ be a sheaf of submodules. Consider $(x,x')\in X\times X$ with $x\neq x'$. Then:
	\begin{enumerate}
		\item[a)] $\cM_D=(\cM_x\circ\pi_1)\oplus (\cM_{x'}\circ\pi_2)$ at $(x,x')$;
		
		\item[b)]  $\overline{\cM_D}=(\overline{\cM_x}\circ\pi_1)\oplus (\overline{\cM_{x'}}\circ\pi_2)$ at $(x,x')$. 
	\end{enumerate}
The same result still holds in the family case.
\end{proposition}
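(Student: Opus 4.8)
The plan is to deduce (a) directly from the explicit generators of $\cM_D$ produced in Proposition \ref{P2}, and then obtain (b) from (a) together with the general fact that the integral closure of modules commutes with finite direct sums.

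For (a) I would fix global generators $h_1,\dots,h_r$ of $\cM$ and pass to the stalk at $(x,x')$, using the generating set $\cB$ of type $(1)$ from Proposition \ref{P2}. Because $x\neq x'$, some coordinate satisfies $z_{i_0}(x)\neq z_{i_0}(x')$, so $z_{i_0}\circ\pi_1-z_{i_0}\circ\pi_2$ is a unit in $\cO_{X\times X,(x,x')}$; dividing the generator $\bigl(0,(z_{i_0}\circ\pi_1-z_{i_0}\circ\pi_2)(h_j\circ\pi_2)\bigr)$ by this unit gives $(0,h_j\circ\pi_2)\in\cM_D$ at $(x,x')$, and then $(h_j\circ\pi_1,0)=(h_j)_D-(0,h_j\circ\pi_2)\in\cM_D$ at $(x,x')$ as well. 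Since the $h_j$ also generate $\cM_x$ and $\cM_{x'}$, the $\cO_{X\times X,(x,x')}$-submodule generated by all $(h_j\circ\pi_1,0)$ and $(0,h_j\circ\pi_2)$ is exactly $(\cM_x\circ\pi_1)\oplus(\cM_{x'}\circ\pi_2)$, which proves one inclusion. The reverse inclusion is immediate: every element of $\cB$ already lies in $(\cM_x\circ\pi_1)\oplus(\cM_{x'}\circ\pi_2)$, since $(h_j)_D=(h_j\circ\pi_1,0)+(0,h_j\circ\pi_2)$ and $\bigl(0,(z_i\circ\pi_1-z_i\circ\pi_2)(h_j\circ\pi_2)\bigr)=(z_i\circ\pi_1-z_i\circ\pi_2)\,(0,h_j\circ\pi_2)$.

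For (b) the key step is a lemma with no reference to doubles: for submodules $N_1\subseteq\cO_Z^{q_1}$ and $N_2\subseteq\cO_Z^{q_2}$ one has $\overline{N_1\oplus N_2}=\overline{N_1}\oplus\overline{N_2}$ inside $\cO_Z^{q_1+q_2}$. I would prove this with the curve criterion used already in the proof of Proposition \ref{P12}: for any analytic arc $\phi$ one has $(N_1\oplus N_2)\circ\phi=(N_1\circ\phi)\oplus(N_2\circ\phi)$, so $(v_1,v_2)\in\overline{N_1\oplus N_2}$ if and only if $v_i\circ\phi\in N_i\circ\phi$ for $i=1,2$ and every $\phi$, if and only if $v_1\in\overline{N_1}$ and $v_2\in\overline{N_2}$. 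Combining this with part (a) gives $\overline{\cM_D}=\overline{\cM_x\circ\pi_1}\oplus\overline{\cM_{x'}\circ\pi_2}$ at $(x,x')$, and it remains only to identify $\overline{\cM_x\circ\pi_1}$ with $\overline{\cM_x}\circ\pi_1$ (and similarly for $\pi_2$). One inclusion, $\overline{\cM_x}\circ\pi_1\subseteq\overline{\cM_x\circ\pi_1}$, is formal — by the curve criterion, a pullback along $\pi_1$ of an element integral over $\cM_x$ is integral over $\cM_x\circ\pi_1$, and $\overline{\cM_x\circ\pi_1}$ is a module.

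The step I expect to be the real obstacle is the reverse inclusion, namely that $\overline{\cM_x}\circ\pi_1$ is already integrally closed in $\cO_{X\times X,(x,x')}^p$; equivalently, that the flat pullback $\pi_1^{*}\colon\cO_{X,x}\to\cO_{X\times X,(x,x')}=\cO_{X,x}\,\widehat{\otimes}_{\bC}\,\cO_{X,x'}$ commutes with integral closure. This holds here because the fibre of $\pi_1$ over $x$ is (a neighbourhood of $x'$ in) $X$, a reduced space, so $\pi_1^{*}$ is flat with reduced fibres; alternatively one argues directly, writing an arc $\gamma$ on $X\times X$ through $(x,x')$ as the pair $(\pi_1\circ\gamma,\pi_2\circ\gamma)$ of arcs on $X$ and using part (a) to reconstruct membership in $\cM_D\circ\gamma$ coordinatewise. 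If one prefers to sidestep this point, the statement $\overline{\cM_D}=\overline{\cM_x\circ\pi_1}\oplus\overline{\cM_{x'}\circ\pi_2}$ is what is actually needed downstream. The family case needs no new idea: replace $\pi_1,\pi_2$ by the projections of $\cX\underset{Y}{\times}\cX$ and Proposition \ref{P2} by Proposition \ref{P3}; for a point $(x,x')$ of the fibered product with $x\neq x'$ one still has $z_{i_0}(x)\neq z_{i_0}(x')$ for some $i_0$, so the unit argument in (a) is unchanged, and the direct-sum lemma is insensitive to the ambient space.
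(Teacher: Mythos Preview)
Your proof of (a) is essentially the paper's, just packaged through the explicit generating set of Proposition~\ref{P2} rather than directly through Lemma~\ref{L1}.

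For (b) you and the paper diverge. The paper argues each inclusion separately: for $(\overline{\cM_x}\circ\pi_1)\oplus(\overline{\cM_{x'}}\circ\pi_2)\subseteq\overline{\cM_D}$ it uses the curve criterion to show $(h\circ\pi_1,0)\in\overline{\cM_D}$ whenever $h\in\overline{\cM_x}$ --- any arc $\gamma=(\gamma_1,\gamma_2)$ through $(x,x')$ satisfies $\gamma^*\cM_D=\gamma_1^*\cM_x\oplus\gamma_2^*\cM_{x'}$ by (a), and $h\circ\gamma_1\in\gamma_1^*\cM_x$ --- and then invokes Proposition~\ref{P12} for the reverse inclusion. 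You instead first prove the general lemma $\overline{N_1\oplus N_2}=\overline{N_1}\oplus\overline{N_2}$ and reduce to the identification $\overline{\cM_x\circ\pi_1}=\overline{\cM_x}\circ\pi_1$. That identification is precisely Corollary~\ref{C2.14}, which the paper \emph{derives from} this proposition rather than using to prove it, so your logical order is inverted relative to the paper. Your base-change justification is fine (the fibre of $\pi_1$ through $(x,x')$ is $(X,x')$, reduced under the standing hypotheses), but it is an extra input the paper does not need for the $\supseteq$ half. What you gain is transparency: your direct-sum lemma is exactly the mechanism underlying the paper's rather terse appeal to Proposition~\ref{P12} for $\subseteq$, and your route makes explicit that the nontrivial content of (b) beyond (a) is the commutation of integral closure with $\pi_i^*$.
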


\begin{proof}
	 Since $x\neq x'$ then $z_{\ell}\circ\pi_1-z_{\ell}\circ\pi_2$ is an invertible element of $\cO_{X\times X,(x,x')}$, for some $\ell\in\{1,...,n\}$.
	
	(a) Given $h\in\cM_x$ arbitrary, Lemma \ref{L1} implies that \\ $((z_{\ell}\circ\pi_1-z_{\ell}\circ\pi_2)(h\circ\pi_1),0)\in\cM_D$. Since $z_{\ell}\circ\pi_1-z_{\ell}\circ\pi_2$ is invertible then $(h\circ\pi_1,0)\in\cM_D$. Thus, $(\cM_x\circ\pi_1)\oplus 0\subseteq \cM_D$ at $(x,x')$. Analogously, $0\oplus(\cM_{x'}\circ\pi_2)\subseteq \cM_D$ at $(x,x')$. Hence $(\cM_x\circ\pi_1)\oplus (\cM_{x'}\circ\pi_2)\subseteq \cM_D$ at $(x,x')$. The other inclusion is obvious.
	
	(b) Since $z_{\ell}\circ\pi_1-z_{\ell}\circ\pi_2$ is an invertible element of $\cO_{X\times X,(x,x')}$ then using the curve criterion it is easy to see that $(h\circ\pi_1,0)\in\overline{\cM_D}$, $\forall h\in\overline{\cM_x}$. Thus, $(\overline{\cM_x}\circ\pi_1)\oplus 0\subseteq \overline{\cM_D}$ at $(x,x')$. Analogously, $0\oplus(\overline{\cM_{x'}}\circ\pi_2)\subseteq \overline{\cM_D}$ at $(x,x')$. Hence $(\overline{\cM_x}\circ\pi_1)\oplus (\overline{\cM_{x'}}\circ\pi_2)\subseteq \overline{\cM_D}$ at $(x,x')$. 
	
	Proposition \ref{P12} (a) implies the other inclusion.
\end{proof}

\begin{corollary}\label{C2.14}
	Let $\cM\subseteq\cO_X^p$ be a sheaf of submodules. Let $(x,x')\in X\times X$ with $x\neq x'$. 
	
	Then $\overline{\cM_x\circ\pi_1}=\overline{\cM_x}\circ\pi_1$ and  $\overline{\cM_{x'}\circ\pi_2}=\overline{\cM_{x'}}\circ\pi_2$ at $(x,x')$.
\end{corollary}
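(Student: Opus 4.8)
The plan is to deduce the corollary formally from Proposition~\ref{P2.13}, using that integral closure of modules commutes with direct sums of submodules placed in complementary blocks of coordinates.

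First I would isolate that auxiliary fact. Suppose $Z$ is a reduced complex analytic germ and $A\subseteq\cO_Z^p\oplus 0$, $B\subseteq 0\oplus\cO_Z^p$ are $\cO_Z$-submodules of $\cO_Z^{2p}$ sitting respectively in the first and last $p$ coordinates; then $\overline{A\oplus B}=\overline{A}\oplus\overline{B}$. This will be immediate from the curve criterion: for every analytic curve $\phi\colon(\bC,0)\to Z$ the $\cO_{\bC,0}$-module generated by $(A\oplus B)\circ\phi$ is the internal direct sum of the $\cO_{\bC,0}$-modules generated by $A\circ\phi$ and by $B\circ\phi$ — because $0$ lies in each of $A$ and $B$ and the nonzero coordinate blocks are disjoint — so a germ $(h_1,h_2)\in\cO_Z^{2p}$ has $(h_1,h_2)\circ\phi$ in that module for every $\phi$ precisely when $h_1\circ\phi\in\cO_{\bC,0}(A\circ\phi)$ and $h_2\circ\phi\in\cO_{\bC,0}(B\circ\phi)$ for every $\phi$, i.e. precisely when $h_1\in\overline{A}$ and $h_2\in\overline{B}$.

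Then I would apply this at $(x,x')$, taking $Z=X\times X$ (which is reduced), $A=\cM_x\circ\pi_1$ in the first $p$ coordinates of $\cO_{X\times X,(x,x')}^{2p}$ and $B=\cM_{x'}\circ\pi_2$ in the last $p$. By Proposition~\ref{P2.13}(a), $\cM_D=A\oplus B$ at $(x,x')$, so the auxiliary fact yields $\overline{\cM_D}=\overline{\cM_x\circ\pi_1}\oplus\overline{\cM_{x'}\circ\pi_2}$ at $(x,x')$; by Proposition~\ref{P2.13}(b), $\overline{\cM_D}=(\overline{\cM_x}\circ\pi_1)\oplus(\overline{\cM_{x'}}\circ\pi_2)$ at $(x,x')$. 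Both are decompositions of the same module $\overline{\cM_D}$ along the fixed splitting $\cO_{X\times X,(x,x')}^{2p}=\cO_{X\times X,(x,x')}^{p}\oplus\cO_{X\times X,(x,x')}^{p}$ into the $\pi_1$- and $\pi_2$-coordinates, so intersecting with the first factor gives $\overline{\cM_x\circ\pi_1}=\overline{\cM_D}\cap(\cO_{X\times X,(x,x')}^{p}\oplus 0)=\overline{\cM_x}\circ\pi_1$, and intersecting with the second gives $\overline{\cM_{x'}\circ\pi_2}=\overline{\cM_{x'}}\circ\pi_2$. The family case runs identically over $X\underset{Y}{\times}X$.

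I do not anticipate a real obstacle here: the only care needed is in the bookkeeping — checking that the curve criterion genuinely factors through a direct sum (which reduces to disjointness of the coordinate blocks together with $0$ belonging to each summand), and making sure the two direct-sum decompositions of $\overline{\cM_D}$ produced by the two halves of Proposition~\ref{P2.13} are read along the identical coordinate splitting, so that uniqueness of the summands lets one match them term by term.
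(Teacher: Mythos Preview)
Your proposal is correct and follows essentially the same route as the paper: use Proposition~\ref{P2.13}(a) together with the fact that integral closure commutes with a direct sum in complementary coordinate blocks to get one decomposition of $\overline{\cM_D}$, use Proposition~\ref{P2.13}(b) to get another, and match the summands. The only difference is that you actually supply the curve-criterion justification for $\overline{A\oplus B}=\overline{A}\oplus\overline{B}$ and spell out the matching via intersection with the coordinate factors, whereas the paper simply asserts the direct-sum step (it is announced just before Proposition~\ref{P2.13}) and leaves the comparison implicit.
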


\begin{proof}
	Item (a) of the previous proposition implies that \\$\overline{\cM_D}=\overline{(\cM_x\circ\pi_1)\oplus (\cM_{x'}\circ\pi_2)}=\overline{(\cM_x\circ\pi_1)}\oplus\overline{ (\cM_{x'}\circ\pi_2)}$ at $(x,x')$. In the other hand, item (b) gives the equation $\overline{\cM_D}=(\overline{\cM_x}\circ\pi_1)\oplus (\overline{\cM_{x'}}\circ\pi_2)$ at $(x,x')$.
\end{proof}

\begin{corollary}\label{C2.13}
	Let $\cM\subseteq\cO_X^p$ be a sheaf of submodules. Consider $(x,x')\in X\times X$ with $x\neq x'$. Then $(\overline{\cM})_D=\overline{\cM_D}$ at $(x,x')$.
\end{corollary}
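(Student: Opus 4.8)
The plan is to deduce Corollary \ref{C2.13} directly from Proposition \ref{P2.13}, which is the hard technical input already in hand. The statement to prove is $(\overline{\cM})_D = \overline{\cM_D}$ at $(x,x')$ for $x \neq x'$, so the key observation is that the left-hand side is the double of the sheaf $\overline{\cM}$, and I can apply part (a) of Proposition \ref{P2.13} with $\overline{\cM}$ in place of $\cM$.

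First I would write, using part (a) of Proposition \ref{P2.13} applied to the submodule sheaf $\overline{\cM}$ (which is again a sheaf of submodules of $\cOXp$), the identity $(\overline{\cM})_D = (\overline{\cM}_x \circ \pi_1) \oplus (\overline{\cM}_{x'} \circ \pi_2)$ at $(x,x')$. Next I would invoke part (b) of Proposition \ref{P2.13}, applied to $\cM$ itself, which gives $\overline{\cM_D} = (\overline{\cM_x} \circ \pi_1) \oplus (\overline{\cM_{x'}} \circ \pi_2)$ at $(x,x')$. Since $\overline{\cM}_x = \overline{\cM_x}$ and $\overline{\cM}_{x'} = \overline{\cM_{x'}}$ (integral closure of the sheaf localizes to integral closure of the stalk), the two right-hand sides coincide, and the corollary follows. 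The same argument applies verbatim in the family case, using the family version of Proposition \ref{P2.13} on the fibered product $X \underset{Y}{\times} X$.

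I do not expect any serious obstacle here, since all the work is done in Proposition \ref{P2.13}; the only point requiring a word of care is the identification $(\overline{\cM})_x = \overline{(\cM_x)}$, i.e. that forming the integral closure commutes with taking stalks, which is standard for coherent sheaves of modules and is implicit in the notation already used throughout the section (for instance in Corollary \ref{C2.14}). One could alternatively phrase the proof so as to avoid even mentioning this: apply part (a) of Proposition \ref{P2.13} to $\overline{\cM}$ and part (b) to $\cM$, and note that in both the resulting direct-sum decompositions the summands are the integral closures of the stalks of $\cM$ pulled back along $\pi_1$ and $\pi_2$ respectively, hence equal. This is exactly the pattern already used to prove Corollary \ref{C2.14}, so the write-up can be kept to two or three lines.

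\begin{proof}
	Applying Proposition \ref{P2.13} (a) to the sheaf of submodules $\overline{\cM}$, we obtain $(\overline{\cM})_D=(\overline{\cM}_x\circ\pi_1)\oplus(\overline{\cM}_{x'}\circ\pi_2)$ at $(x,x')$. On the other hand, Proposition \ref{P2.13} (b) applied to $\cM$ gives $\overline{\cM_D}=(\overline{\cM_x}\circ\pi_1)\oplus(\overline{\cM_{x'}}\circ\pi_2)$ at $(x,x')$. Since $\overline{\cM}_x=\overline{\cM_x}$ and $\overline{\cM}_{x'}=\overline{\cM_{x'}}$, the right-hand sides of these two identities coincide, and hence $(\overline{\cM})_D=\overline{\cM_D}$ at $(x,x')$. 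The same argument, using the family version of Proposition \ref{P2.13}, proves the statement in the family case.
\end{proof}
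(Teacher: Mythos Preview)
Your proof is correct and follows essentially the same approach as the paper: both deduce the result from Proposition \ref{P2.13} by comparing the direct-sum decompositions at $(x,x')$. The paper's chain of equalities routes through Corollary \ref{C2.14} and the fact that integral closure commutes with direct sums, whereas you go directly from part (a) applied to $\overline{\cM}$ to part (b) applied to $\cM$; this is a minor reorganization of the same argument, and your version is arguably slightly more direct.
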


\begin{proof}
	Using the previous results we have \\ $(\overline{\cM})_D=(\overline{\cM_x}\circ\pi_1)\oplus (\overline{\cM_{x'}}\circ\pi_2)=(\overline{\cM_x\circ\pi_1})\oplus (\overline{\cM_{x'}\circ\pi_2})\\=\overline{(\cM_x\circ\pi_1)\oplus (\cM_{x'}\circ\pi_2)}=\overline{\cM_D}$.
\end{proof}

\newpage

\begin{proposition}\label{P2.14}
	Let $\cM\subseteq\cO_X^p$ be a sheaf of submodules. Suppose $X^d$ is reduced and equidimensional, and $\Sigma(\cM)\subseteq\{0\}$. Then the multiplicity of the pair $e(\cM_D,(\overline{\cM})_D)$ is well-defined at $(0,0)$. 
	
	In particular, in the notation of \cite{G5}, if $\cM$ has finite colength in $\cOXp$ then $H_{2d-1}(\cM_D)=(\overline{\cM})_D$. 
\end{proposition}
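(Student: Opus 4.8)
The plan is to reduce the statement to the structural results already established in this section --- Corollary \ref{C2.10} (generic ranks of doubles), Proposition \ref{P2.13} (splitting of a double off the diagonal), Corollary \ref{C2.13}, and Proposition \ref{P12} --- together with the definitions of the multiplicity of a pair from \cite{KT} and of the modules $H_j$ from \cite{G5}. First, the ambient bookkeeping: since $X^d$ is reduced and equidimensional, $X\times X$ is reduced and equidimensional of pure dimension $2d$, with irreducible components the products of components of $X$. From $\cM\subseteq\overline{\cM}$ we get $\cM_D\subseteq(\overline{\cM})_D$, and since $\cM$ and $\overline{\cM}$ have the same generic rank on each component of $X$, Corollary \ref{C2.10} shows $\cM_D$ and $(\overline{\cM})_D$ have the same generic rank on every component of $X\times X$. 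Equidimensionality of $X\times X$ together with this equality of generic ranks is exactly what makes the multiplicity of the pair $e(\cM_D,(\overline{\cM})_D)$ well-defined at $(0,0)$ in the sense of \cite{KT}, which settles the first assertion.

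For the ``in particular'', the key step is to locate the relevant cokernel sheaves. The hypothesis $\Sigma(\cM)\subseteq\{0\}$ says $[\cM(x)]$ has maximal rank for all $x\neq 0$; restricting to the branches of $X$ at such an $x$ (the local ring being a domain since $X$ is reduced), a maximal-rank family of generators is a basis of a free submodule, and a submodule of $\cOXp$ that is free of the generic rank at a point is integrally closed there, so $\overline{\cM}=\cM$ on $X\setminus\{0\}$ and $\overline{\cM}/\cM$ has finite length. For $(x,x')$ near $(0,0)$ with $x\neq x'$, Proposition \ref{P2.13}(a), applied to $\cM$ and to $\overline{\cM}$ and using $(\overline{\cM})_x=\overline{\cM_x}$, gives $\cM_D=(\cM_x\circ\pi_1)\oplus(\cM_{x'}\circ\pi_2)$ and $(\overline{\cM})_D=(\overline{\cM_x}\circ\pi_1)\oplus(\overline{\cM_{x'}}\circ\pi_2)$, so $(\overline{\cM})_D/\cM_D$ there is $\big((\overline{\cM_x}/\cM_x)\circ\pi_1\big)\oplus\big((\overline{\cM_{x'}}/\cM_{x'})\circ\pi_2\big)$, supported on $(\{0\}\times X)\cup(X\times\{0\})$. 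For a diagonal point $(x,x)$ with $x\neq 0$ we have $\overline{\cM}=\cM$ near $x$, so $(\overline{\cM})_D=\cM_D$ near $(x,x)$, and one checks from the explicit generators of Proposition \ref{P2} that $\cM_D$ is integrally closed near $(x,x)$ --- its only rank drop there is along $\Delta(X)$, which the double absorbs --- so $\overline{\cM_D}=(\overline{\cM})_D$ near $(x,x)$ as well. Combining with Corollary \ref{C2.13} ($(\overline{\cM})_D=\overline{\cM_D}$ off $\Delta(X)$), we get $\mathrm{Supp}\big((\overline{\cM})_D/\cM_D\big)\subseteq(\{0\}\times X)\cup(X\times\{0\})$ while $\mathrm{Supp}\big(\overline{\cM_D}/(\overline{\cM})_D\big)\subseteq\{(0,0)\}$.

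Finally, when $\cM$ has finite colength in $\cOXp$ the modules $\cM_D\subseteq(\overline{\cM})_D\subseteq\overline{\cM_D}$ all have generic rank $2p$ on each component of $X\times X$, and by the previous paragraph $(\overline{\cM})_D$ is the submodule $\cN$ with $\cM_D\subseteq\cN\subseteq\overline{\cM_D}$ whose cokernel in $\overline{\cM_D}$ has finite length and whose cokernel over $\cM_D$ is concentrated on $(\{0\}\times X)\cup(X\times\{0\})$. This is exactly the stage $H_{2d-1}(\cM_D)$ of the integral-closure filtration of $\cM_D$ of \cite{G5}: the inclusion $(\overline{\cM})_D\subseteq H_{2d-1}(\cM_D)$ comes from $(\overline{\cM})_D\subseteq\overline{\cM_D}$ with finite-length cokernel, while for the reverse inclusion one tests $h\in H_{2d-1}(\cM_D)\subseteq\overline{\cM_D}$ against curves $\gamma(t)=(\phi(t),x')$ and applies Proposition \ref{P12}(a), which forces $h$ to be a double of an element of $\overline{\cM}$; hence $H_{2d-1}(\cM_D)=(\overline{\cM})_D$. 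I expect the two genuinely technical points to be the integral-closedness of $\cM_D$ near a diagonal point $(x,x)$ with $x\neq 0$ (to be extracted from the generators of Proposition \ref{P2}), and the precise matching with the definition of $H_{2d-1}$ in \cite{G5}, namely verifying that the two support statements above are the defining property of the $(2d-1)$-st filtration stage; the remaining steps are routine.
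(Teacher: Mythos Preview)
Your first paragraph misidentifies the well-definedness criterion. Equality of generic ranks on an equidimensional base is necessary for $e(\cM_D,(\overline{\cM})_D)$ to make sense as a pair, but it is not what makes the multiplicity a finite number at $(0,0)$: for that one needs $\cM_D$ to be a reduction of $(\overline{\cM})_D$ on a punctured neighborhood, i.e.\ $\overline{\cM_D}=\overline{(\overline{\cM})_D}$ at every $(x,x')\neq(0,0)$. The paper proves exactly this, in two lines: off the diagonal Corollary~\ref{C2.13} gives $(\overline{\cM})_D=\overline{\cM_D}$ directly; on the diagonal at $(x,x)$ with $x\neq 0$ one has $x\notin\Sigma(\cM)$, hence $\overline{\cM}=\cM$ at $x$ (this is Proposition~1.7 of \cite{G5}), so $(\overline{\cM})_D=\cM_D$ at $(x,x)$ and the closures agree trivially. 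You actually carry out this support analysis in your second paragraph, but you frame it as input for the ``in particular'' rather than as the proof of the first assertion; and you add an unnecessary claim that $\cM_D$ is integrally closed near a diagonal point $(x,x)$, which is both harder than needed and not obviously true from Proposition~\ref{P2}. All that is required there is $(\overline{\cM})_D=\cM_D$, which is immediate from $\overline{\cM}=\cM$ at $x$.

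Your reverse inclusion $H_{2d-1}(\cM_D)\subseteq(\overline{\cM})_D$ in the last paragraph has a genuine gap. Proposition~\ref{P12}(a) applies only to elements of the form $h_D$, i.e.\ doubles of sections of $\cO_X^p$; an arbitrary element of $H_{2d-1}(\cM_D)\subseteq\cO_{X\times X}^{2p}$ has no reason to be a double, so the curve test $\gamma(t)=(\phi(t),x')$ followed by Proposition~\ref{P12}(a) does not ``force $h$ to be a double of an element of $\overline{\cM}$''. The identification $H_{2d-1}(\cM_D)=(\overline{\cM})_D$ should instead be read off directly from the definition in \cite{G5} once you know $\overline{\cM_D}=\overline{(\overline{\cM})_D}$ off $(0,0)$: that equality is precisely the statement that $(\overline{\cM})_D$ agrees with $\overline{\cM_D}$ outside a set of dimension~$0$, which is the defining property of the $(2d-1)$st stage of the filtration on a $2d$-dimensional space.
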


\begin{proof}
	We need to show that $\overline{(\overline{\cM})_D}=\overline{\cM_D}$ at any point $(x,x')\neq(0,0)$.
	
	Suppose first $x\neq x'$. So by Corollary \ref{C2.13} one has $(\overline{\cM})_D=\overline{\cM_D}$ at $(x,x')$ which implies that $\overline{(\overline{\cM})_D}=\overline{\overline{\cM_D}}=\overline{\cM_D}$ at $(x,x')$.
	
	Now, we may assume $x=x'$. Since $(x,x)\neq(0,0)$ then $x\neq 0$, i.e, $x\in X-\Sigma(\cM)$. Proposition 1.7 of \cite{G5} implies that $\overline{\cM}=\cM$ at $x$. Thus, taking the double at $(x,x)$ we have $(\overline{\cM})_D=\cM_D$ at $(x,x)$ which implies $\overline{(\overline{\cM})_D}=\overline{\cM_D}$ at $(x,x)$.
\end{proof}

Let $F:(\bC^n,0)\rightarrow(\bC^p,0)$ be an analytic map, $X=F^{-1}(0)$, $d=\dim X$ and $JM(X)$ the jacobian module defined on $X$. Denote by $\Sigma(X)$ the singular set of $X$. The next result is a straightforward consequence of Proposition \ref{P2.14} and the inclusion $\Sigma(JM(X))\subseteq\Sigma(X)$.

\begin{corollary}\label{C2.18}
 If $X$ has isolated singularity at the origin then the multiplicity of the pair of modules $e((JM(X))_D,(\overline{JM(X)})_D)$ is well defined at $(0,0)$.
\end{corollary}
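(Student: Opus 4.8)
The plan is to deduce this directly from Proposition \ref{P2.14} applied to the module $\cM := JM(X)$ over the germ $(X,0)$. First I would observe that the hypothesis that $X$ has an isolated singularity at the origin means precisely $\Sigma(X)\subseteq\{0\}$, and that $X=F^{-1}(0)$ is reduced and equidimensional of dimension $d$ near the origin: off $0$ the germ $X$ is smooth of pure dimension $d$, and the standing assumption of an isolated singularity presupposes a reduced germ. Then I would invoke the inclusion $\Sigma(JM(X))\subseteq\Sigma(X)$ already quoted before the statement — the Jacobian module drops rank only where $X$ itself is singular — so that $\Sigma(\cM)\subseteq\Sigma(X)\subseteq\{0\}$. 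This is exactly the hypothesis $\Sigma(\cM)\subseteq\{0\}$ required by Proposition \ref{P2.14}, together with the reducedness and equidimensionality of $X^d$.

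With all the hypotheses of Proposition \ref{P2.14} in place, its conclusion asserts that the multiplicity of the pair $e(\cM_D,(\overline{\cM})_D)=e\big((JM(X))_D,(\overline{JM(X)})_D\big)$ is well-defined at $(0,0)$, which is the claim. The only step that demands any care is the verification of the module-theoretic hypotheses, namely the inclusion $\Sigma(JM(X))\subseteq\Sigma(X)$ and the reducedness/equidimensionality of $X$; once these standard facts are recorded, nothing further is needed, since Proposition \ref{P2.14} already does all the work — it reduces well-definedness to the equality $\overline{(\overline{\cM})_D}=\overline{\cM_D}$ at every point $(x,x')\neq(0,0)$, which is handled by Corollary \ref{C2.13} off the diagonal and by the persistence of integral closure on the smooth locus on the diagonal.
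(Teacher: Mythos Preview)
Your proposal is correct and follows exactly the paper's approach: the paper states the corollary as ``a straightforward consequence of Proposition \ref{P2.14} and the inclusion $\Sigma(JM(X))\subseteq\Sigma(X)$,'' and you supply precisely that verification, checking the reducedness/equidimensionality and the inclusion $\Sigma(\cM)\subseteq\{0\}$ needed to invoke Proposition \ref{P2.14}. Your final paragraph recapitulating how Proposition \ref{P2.14} works is accurate but unnecessary for the corollary itself.
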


Proposition \ref{P2.13} provides additional motivation for the idea of the double: In order to control the Lipschitz behavior of pairs of tangent planes at two different points $x$ and $x'$ of a family $\cX$, it is helpful to have each module which determines the tangent hyperplanes at each point as part of the construction. Furthermore, this proposition shows that $JM(\cX)_D$ at $(x,x')$ contains both $JM(\cX)_x$ and $JM(\cX)_{x'}$.

 %colocar na tese da proposicao 2.11 para baixo, 
% e trocar M por \cM quando for sheaf; 
 %substituir \mbox{ / } por \mid na tese ao definir conjuntos. %Conferir \subseteq aqui no artigo e na tese.
 % Definir a notacao H_{2n-p}((JM(F))_D) dada particularmente antes %do Lemma 3.3 de G1.
 %colocar um corolario parecido com o 2.16 para familias na secao 3

\section{The Infinitesimal Lipschitz conditions $iLA$ and $iLm_Y$}\label{sec2}
 
 Now we use some of the results presented in last section to recover some properties about the infinitesimal Lipschitz conditions for the following more general setup.

\begin{setup}\label{setup2.1}
Let $(\cX,0)\subseteq (\bC^{n+k},0) $ be the germ of the analytic space defined by an analytic map $F: \bC^{n}\times\bC^{k}\rightarrow \bC^p$, $n\geq p$, %where $\cX$ is a sufficiently small representative such that $Y=\bC^{k}=0\times\bC^{k}\subseteq \cX$ is the singular set $\Sigma(\cX)$ of $\cX$, 
$Y=\bC^{k}=0\times\bC^{k}\subseteq \cX$.
Let $F_1,\dots,F_p: \bC^{n}\times\bC^{k}\rightarrow \bC$ be the coordinates functions of $F$, for each $y\in Y$ let $f_y: \bC^{n}\rightarrow \bC^p$ given by $f_y(z):=F(z,y)$ and let $\cX_y:=f_y^{-1}(0)$. Let $z_1,\ldots,z_{n},y_1,\ldots,y_k$ be the coordinates on $\bC^{n+k}$, let $m_Y$ be the ideal of $\cO_{\cX}$ generated by $\{z_1,\ldots,z_{n}\}$, let $JM(\cX)$ be the Jacobian module of $\cX$, let $JM(\cX)_{Y}$ be the module generated by $\{ \frac{\partial F}{\partial y_1},\dots,\frac{\partial F}{\partial y_k}\}$ and let $J_{z}M(\cX)$ be the module generated by  $\{\frac{\partial F}{\partial z_1},\dots,\frac{\partial F}{\partial z_{n}}\}$.
\end{setup}
 
 In this section we work with the double relative to $Y$ and with the projections $\pi_1,\pi_2: \cX\underset{Y}{\times}\cX\rightarrow \cX$.
 
 \begin{definition}
 \begin{itemize}
 \item The pair $(\cX_0,Y)$ satisfy the $iL_{m_Y}$ condition at $(y,0\times 0)\in \cX\underset{Y}{\times}\cX$ if $(JM(\cX)_Y)_D\subseteq\overline{(m_YJ_zM(\cX))_D}$ at $(y,0\times 0)$;
 \item The pair $(\cX_0,Y)$ satisfy the $iL_{A}$ condition at $(y,0\times 0)\in \cX\underset{Y}{\times}\cX$ \newline if $(JM(\cX)_Y)_D\subseteq\overline{(J_zM(\cX))_D}$ at $(y,0\times 0)$.
  \end{itemize}
 \end{definition}
 
 Notice that $iL_{m_Y}$ implies $iL_A$. 

\begin{lemma}\label{L8}
\begin{enumerate}
\item If $(JM(\cX)_{Y})_D\subseteq\overline{(m_YJM(\cX))_D}$ at the origin then $$JM(\cX)_Y\subseteq\overline{m_YJ_zM(\cX)}$$ at the origin, i.e, the W condition holds at the origin.
\item If $(JM(\cX)_{Y})_D\subseteq((m_YJM(\cX))_D)^{\dagger}$ at the origin then $$JM(\cX)_Y\subseteq(m_YJ_zM(\cX))^{\dagger}$$ at the origin.

\end{enumerate}
\end{lemma}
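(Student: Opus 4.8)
The plan is to obtain both assertions as near-immediate consequences of Corollary~\ref{C13}, which is precisely the device for descending an inclusion of doubles at a diagonal point to an inclusion of the original modules. The first thing I would record is that the point at which the hypothesis is made --- the origin $0\times 0$ of $\cX\underset{Y}{\times}\cX$ --- lies on the diagonal $\Delta(\cX)$, hence is of the form $(x,x)$ with $x=0$; this is exactly the shape of point to which the family-case version of Corollary~\ref{C13} applies.

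For part (1): applying Corollary~\ref{C13}(1) (family case) at $(0,0)$ with $\cM=JM(\cX)_Y$ and $\cN=m_YJM(\cX)$ converts the hypothesis $(JM(\cX)_Y)_D\subseteq\overline{(m_YJM(\cX))_D}$ into $JM(\cX)_Y\subseteq\overline{m_YJM(\cX)}$ at $0$. Since the Jacobian module satisfies $JM(\cX)=JM(\cX)_Y+J_zM(\cX)$, we have $m_YJM(\cX)=m_YJ_zM(\cX)+m_YJM(\cX)_Y$, so the inclusion reads $JM(\cX)_Y\subseteq\overline{m_YJ_zM(\cX)+m_YJM(\cX)_Y}$. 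Because $m_Y$ is contained in the maximal ideal of $\cO_{\cX,0}$, one can now absorb the $m_YJM(\cX)_Y$ summand: pulling back by an arbitrary analytic arc through the origin and invoking Nakayama over $\bC\{t\}$ (where the pullback of $m_Y$ lies in $(t)$) gives $JM(\cX)_Y\subseteq\overline{m_YJ_zM(\cX)}$ at the origin. This last inclusion is the integral-closure characterization of Verdier's condition $W$ for $\cX$ along $Y$ at the origin, recalled for hypersurfaces in Section~\ref{sec0}, which is exactly the claim.

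Part (2) follows the same two steps verbatim, with the strict integral closure $(\cdot)^{\dagger}$ replacing $\overline{(\,\cdot\,)}$ throughout and Corollary~\ref{C13}(2) replacing Corollary~\ref{C13}(1): the hypothesis yields $JM(\cX)_Y\subseteq(m_YJM(\cX))^{\dagger}$ at $0$, and the same arc-and-Nakayama reduction (now for the strict integral closure, whose curve criterion again forces the pulled-back $m_YJM(\cX)_Y$ part to be superfluous) gives $JM(\cX)_Y\subseteq(m_YJ_zM(\cX))^{\dagger}$ at the origin.

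Most of this is bookkeeping on top of Section~\ref{sec1}: one must keep the fibered-product (relative to $Y$) setting throughout and identify the origin of $\cX\underset{Y}{\times}\cX$ as a point of the diagonal so that Corollary~\ref{C13} is available. The only genuinely non-quotational step --- and the one I would expect to write out with some care --- is the reduction that discards $m_YJM(\cX)_Y$ from inside the (strict) integral closure; everything before and after it is a direct appeal to results already established.
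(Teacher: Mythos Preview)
Your proposal is correct and follows essentially the same approach as the paper: invoke Corollary~\ref{C13} at the diagonal origin to descend to $JM(\cX)_Y\subseteq\overline{m_YJM(\cX)}$ (respectively $(m_YJM(\cX))^{\dagger}$), then use the decomposition $JM(\cX)=JM(\cX)_Y+J_zM(\cX)$ together with the curve criterion and Nakayama over $\cO_{\bC,0}$ to absorb the $m_YJM(\cX)_Y$ summand. The only cosmetic difference is that the paper starts with the curve $\phi$ and records the pullback inclusion directly, whereas you state the module inclusion first and then pull back; the content is identical.
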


\begin{proof}
(1) Let $\phi: (\bC,0)\rightarrow(\cX,0)$ be an arbitrary analytic curve. By hypothesis and Corollary \ref{C13} (1) we have $$\phi^{*}(JM(\cX)_Y)\subseteq\phi^{*}(m_YJM(\cX)).$$ 

Thus, $\phi^{*}(JM(\cX)_{Y})\subseteq m_1\phi^{*}(JM(\cX)_Y)+\phi^{*}(m_YJ_zM(\cX))$. \\By Nakayama's Lemma we conclude that $$\phi^{*}(JM(\cX)_{Y})\subseteq \phi^{*}(m_YJ_zM(\cX)).$$

(2) The analogous proof goes through, working with the strict integral closure.
\end{proof}

The next result says the $iL_{m_Y}$ condition is independent of the projection onto $Y$.

\begin{proposition}\label{P9}
$(JM(\cX)_{Y})_D\subseteq\overline{((m_YJM(\cX))_D)}$ at the origin if and only if $(JM(\cX)_{Y})_D\subseteq\overline{(m_YJ_zM(\cX))_D}$ at the origin.
\end{proposition}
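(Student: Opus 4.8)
The implication $(\Leftarrow)$ is immediate, since $m_YJ_zM(\cX)\subseteq m_YJM(\cX)$ forces $(m_YJ_zM(\cX))_D\subseteq (m_YJM(\cX))_D$, hence $\overline{(m_YJ_zM(\cX))_D}\subseteq\overline{(m_YJM(\cX))_D}$. For $(\Rightarrow)$ the plan is to pull everything back along an arbitrary analytic curve $\gamma\colon(\bC,0)\to(\cX\underset{Y}{\times}\cX,(0,0))$ and use the curve criterion for the integral closure of modules. Over the DVR $\cO_{\bC,0}$ every finitely generated submodule of a free module is integrally closed (diagonalize and use that integral closure commutes with direct sums, together with the fact that ideals of a DVR are integrally closed), so it is enough to prove the plain inclusion $\gamma^{*}\big((JM(\cX)_{Y})_D\big)\subseteq\gamma^{*}\big((m_YJ_zM(\cX))_D\big)$ for every such $\gamma$. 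Since $JM(\cX)=J_zM(\cX)+JM(\cX)_{Y}$, Lemma \ref{L1}(4) gives $(m_YJM(\cX))_D=(m_YJ_zM(\cX))_D+(m_YJM(\cX)_{Y})_D$, so the hypothesis plus the curve criterion yields
$$\gamma^{*}\big((JM(\cX)_{Y})_D\big)\ \subseteq\ \gamma^{*}\big((m_YJ_zM(\cX))_D\big)+\gamma^{*}\big((m_YJM(\cX)_{Y})_D\big).$$

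Write $\phi_a=\pi_a\circ\gamma$ and $\mathfrak d=\big(z_1\circ\phi_1-z_1\circ\phi_2,\dots,z_n\circ\phi_1-z_n\circ\phi_2\big)\subseteq\cO_{\bC,0}$, an ideal contained in the maximal ideal because $\phi_1(0)=\phi_2(0)=0$. From the description of generators of a relative double in Proposition \ref{P3}(1)--(2), for any finitely generated $\cN\subseteq\cO_{\cX}^{p}$ with generators $g_1,\dots,g_s$ one obtains
$$\gamma^{*}(\cN_D)=\big\langle (g_l\circ\phi_1,g_l\circ\phi_2):l\big\rangle+\mathfrak d\big\langle (g_l\circ\phi_1,0):l\big\rangle+\mathfrak d\big\langle (0,g_l\circ\phi_2):l\big\rangle .$$
Apply this to $\cN=JM(\cX)_{Y}$ (generators $h_j=\partial F/\partial y_j$), to $\cN=m_YJ_zM(\cX)$, and to $\cN=m_YJM(\cX)_{Y}$; put $B=\gamma^{*}\big((m_YJ_zM(\cX))_D\big)$, $V=\big\langle (h_j\circ\phi_1,h_j\circ\phi_2):j\big\rangle$, $D_1=\mathfrak d\langle(h_j\circ\phi_1,0):j\rangle$, $D_2=\mathfrak d\langle(0,h_j\circ\phi_2):j\rangle$. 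By Lemma \ref{L8}(1) the hypothesis forces the $W$-condition $JM(\cX)_{Y}\subseteq\overline{m_YJ_zM(\cX)}$ at $0$, so the curve criterion applied to the arcs $\phi_1$ and $\phi_2$ gives $h_j\circ\phi_a\in\phi_a^{*}\big(m_YJ_zM(\cX)\big)$ for $a=1,2$. Two things follow: first, $D_1,D_2\subseteq B$; second, since each generator $z_ih_j$ of $m_YJM(\cX)_{Y}$ carries a factor $z_i$ that becomes divisible by the uniformizer $t$ after pull-back, the same decomposition (splitting the diagonal part $(z_i\circ\phi_1)(h_j\circ\phi_1,h_j\circ\phi_2)-(0,(z_i\circ\phi_1-z_i\circ\phi_2)(h_j\circ\phi_2))$ and absorbing the $\mathfrak d$-terms into $D_1,D_2$) gives $\gamma^{*}\big((m_YJM(\cX)_{Y})_D\big)\subseteq tV+B$.

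Now $\gamma^{*}\big((JM(\cX)_{Y})_D\big)=V+D_1+D_2$ with $D_1,D_2\subseteq B$, while $V\subseteq\gamma^{*}\big((JM(\cX)_{Y})_D\big)\subseteq B+\gamma^{*}\big((m_YJM(\cX)_{Y})_D\big)\subseteq B+tV$. Hence the image of $V$ in the finitely generated $\cO_{\bC,0}$-module $\cO_{\bC,0}^{2p}/B$ equals $t$ times itself, so it vanishes by Nakayama's lemma; that is, $V\subseteq B$, whence $\gamma^{*}\big((JM(\cX)_{Y})_D\big)\subseteq B$. Since $\gamma$ was arbitrary, the curve criterion gives $(JM(\cX)_{Y})_D\subseteq\overline{(m_YJ_zM(\cX))_D}$ at the origin.

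The step requiring the most care is exactly this last absorption. One cannot run Nakayama directly on $\cX\underset{Y}{\times}\cX$ at $(0,0)$: the elements $(z_ih_j)_D$ are in general \emph{minimal} generators of $(JM(\cX)_{Y})_D$, so $(m_YJM(\cX)_{Y})_D\not\subseteq\mathfrak m_{(0,0)}\,(JM(\cX)_{Y})_D$. Passing to a curve is what turns the factors $z_i$ into genuine powers of $t$, and the $W$-condition supplied by Lemma \ref{L8} is precisely what allows the ``one-sided'' pieces to be swallowed into $B$; these two facts together make the Nakayama argument run over $\cO_{\bC,0}$.
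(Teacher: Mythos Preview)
Your proof is correct and follows essentially the same approach as the paper's: both pull back along an arbitrary curve into $\cX\underset{Y}{\times}\cX$, invoke Lemma \ref{L8}(1) to obtain the $W$-condition so that the one-sided pieces $((z_i\circ\pi_1-z_i\circ\pi_2)(\partial F/\partial y_\ell\circ\pi_a),0)$ land in $\phi^*((m_YJ_zM(\cX))_D)$, and then run Nakayama over $\cO_{\bC,0}$ to absorb the $m_YJM(\cX)_Y$ part. The only cosmetic difference is that the paper applies Nakayama to $\phi^*((m_YJM(\cX))_D)$ modulo $\phi^*((m_YJ_zM(\cX))_D)$, whereas you apply it to your $V$ modulo $B$; your closing remark on why Nakayama must be done over the curve rather than at $(0,0)$ is a useful observation not made explicit in the paper.
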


\begin{proof}
The implication $(\Longleftarrow)$ is obvious. Let us to prove $(\Longrightarrow)$.

Let $\phi=(\phi_1,\phi_2): (\bC,0)\rightarrow(\cX\underset{Y}{\times}\cX,(0,0))$ be an arbitrary analytic curve.

Let us prove that $$((z_i\circ\phi_1-z_i\circ\phi_2)(\frac{\partial F}{\partial y_{\ell}}\circ\phi_1),0)\in \phi^{*}((m_YJ_zM(\cX))_D),$$ for all $i\in\{1,...,n\}$ and $\ell\in\{1,...,k\}$. 

In fact, by Lemma \ref{L8} (1) we have $(JM(\cX))_Y\subseteq\overline{(m_YJ_zM(\cX))}$, so $\frac{\partial F}{\partial y_{\ell}}\circ\phi_1\in \phi_{1}^{*}(m_YJ_zM(\cX))$ and we can write $\frac{\partial F}{\partial y_{\ell}}\circ\phi_1=\sum\limits_{r,j}\beta_{rj}((z_r\frac{\partial F}{\partial z_j})\circ\phi_1)$, with $\beta_{rj}\in\cO_{\bC,0}$. Then, $((z_i\circ\phi_1-z_i\circ\phi_2)(\frac{\partial F}{\partial y_{\ell}}\circ\phi_1),0)=\sum\limits_{r,j}\beta_{rj}\phi^{*}(((z_i\circ\pi_1-z_i\circ\pi_2)((z_r\frac{\partial F}{\partial z_j})\circ\pi_1),0))\in \phi^{*}((m_YJ_zM(\cX))_D)$. 

Let us prove that $$\phi^{*}((m_YJM(\cX)_Y)_D)\subseteq m_1\phi^{*}((m_YJM(\cX))_D)+\phi^{*}((m_YJ_zM(\cX))_D).$$ 

In fact, it is enough to look to the images of the generators of $(m_YJM(\cX)_Y)_D$. For all $i,j\in\{1,...,n\}$ and $\ell\in\{1,...,k\}$ we have $$\phi^{*}((z_i\frac{\partial F}{\partial y_{\ell}})_D)=(z_i\circ\phi_2)(\frac{\partial F}{\partial y_{\ell}}\circ\phi_1,\frac{\partial F}{\partial y_{\ell}}\circ\phi_2)+((z_i\circ\phi_1-z_i\circ\phi_2)(\frac{\partial F}{\partial y_{\ell}}\circ\phi_1),0).$$
\noindent Hence, $\phi^{*}((z_i\frac{\partial F}{\partial y_{\ell}})_D)\in m_1\phi^{*}((m_YJM(\cX))_D)+\phi^{*}((m_YJ_zM(\cX))_D)$. By Nakayama's Lemma we conclude that $$\phi^{*}((m_YJM(\cX))_D)\subseteq \phi^{*}((m_YJ_zM(\cX))_D).$$ 
\end{proof}

While a similar result for $iL_A$ does not make sense, if we work with the strict $iL_A$ condition, then we get an analogous result.

The next result generalizes Proposition 4.2 of \cite{G1}. It extends 4.2 even in the hypersurface case, because we do not assume $\cX$ is a family of isolated singularities. In our setup, the integral closure condition that defines $iL_A$ makes sense at all points $(x,x')\in \cX\underset{Y}{\times}\cX$. In the proof of our main result \ref {T2.2.8} we will need  to know that the integral closure condition holds off $Y$. 

\begin{proposition}\label{P7}
	Consider the family $\cX$ as above, and assume condition $W$ holds for the pair $\cX_0, Y$ along $Y$.
	\begin{enumerate}
		\item[a)] If $(x,x)\in\Delta(\cX)-(\Sigma(\cX)\underset{Y}{\times}\Sigma(\cX))$ then $(JM(\cX)_{Y})_D\subseteq(J_zM(\cX))_D$ at $(x,x)$. 
		%In particular, $iL_A$ holds at $(x,x)$.
		
		%\item [b)] Suppose that $W$ condition holds for $(\cX_0,Y)$ at $(0,0)$. Then, there exists a dense open subset $\cU$ of $\cX$  such that $iL_A$ holds at $(x,x')$, for every $ (x,x')\in\cU\underset{Y}{\times}\cU$, $x\neq x'$.
		\item[b)] If $(x,x')\in \cX\underset{Y}{\times}\cX)$, $x\neq x'$,  Then $(JM(\cX)_{Y})_D\subseteq\overline{J_zM(\cX))_D}$ at $(x,x')$.
		%\item [c)] If $(x,x')\in \Sigma(\cX)\underset{Y}{\times}\Sigma(\cX)$, $x\neq x'$, and \textcolor{red}{suppose that $\cX$ satisfies $W$ condition along $\Sigma(\cX)$}. Then $iL_A$ holds at $(x,x')$.
		
	\end{enumerate}
\end{proposition}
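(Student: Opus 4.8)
The plan is to prove the two assertions of Proposition \ref{P7} separately, in both cases reducing the statement about doubles to statements about the underlying modules via the results of Section \ref{sec1}.

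\textbf{Part a).} Suppose $(x,x)\in\Delta(\cX)-(\Sigma(\cX)\underset{Y}{\times}\Sigma(\cX))$. Then at least one of the two copies of $x$ lies off $\Sigma(\cX)$, i.e.\ $\cX$ is smooth at $x$. First I would record that when $\cX$ is smooth at $x$ the jacobian module $J_zM(\cX)$ surjects onto (the free module associated to) the conormal direction, so in fact $JM(\cX)_Y\subseteq J_zM(\cX)$ at $x$: this is the infinitesimal form of the statement that the relative tangent space is contained in the absolute one at a smooth point, and it follows because condition $W$ (hence the Whitney conditions) along $Y$ guarantees that the tangent plane to $\cX_y$ moves continuously and the $\partial F/\partial y_\ell$ are combinations of the $\partial F/\partial z_j$ up to terms vanishing at $x$; alternatively one uses that $J_zM(\cX)$ has maximal rank at a smooth point so by Nakayama/Cramer every element of $\cO_{\cX,x}^p$ in the same generic rank stratum lies in it. Having $JM(\cX)_Y\subseteq J_zM(\cX)$ at $x$, I then take the double at $(x,x)$: applying the double operation to an inclusion of submodules is functorial, so $(JM(\cX)_Y)_D\subseteq (J_zM(\cX))_D$ at $(x,x)$, which is exactly the claim. (Here one should be slightly careful: the double is generated by the $h_D$ together with the extra generators from Proposition \ref{P3}, but since those extra generators of $(JM(\cX)_Y)_D$ are $\cO$-multiples of the images of generators of $JM(\cX)_Y$, the inclusion of modules does pass to the doubles.)

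\textbf{Part b).} Now suppose $(x,x')\in\cX\underset{Y}{\times}\cX$ with $x\neq x'$. Since $x\neq x'$, some coordinate difference $z_\ell\circ\pi_1-z_\ell\circ\pi_2$ is a unit in $\cO_{\cX\times_Y\cX,(x,x')}$, so Proposition \ref{P2.13}(a) (in the family version) gives $(J_zM(\cX))_D=(J_zM(\cX)_x\circ\pi_1)\oplus(J_zM(\cX)_{x'}\circ\pi_2)$ at $(x,x')$, and Proposition \ref{P2.13}(b) gives $\overline{(J_zM(\cX))_D}=(\overline{J_zM(\cX)_x}\circ\pi_1)\oplus(\overline{J_zM(\cX)_{x'}}\circ\pi_2)$. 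Likewise $(JM(\cX)_Y)_D$ splits as a direct sum of the two pullbacks. Thus the desired inclusion $(JM(\cX)_Y)_D\subseteq\overline{(J_zM(\cX))_D}$ at $(x,x')$ is equivalent to the pair of inclusions $JM(\cX)_Y\subseteq\overline{J_zM(\cX)}$ at $x$ and at $x'$, i.e.\ the analogue of the $iL_A$ integral closure condition, but at the single points $x$ and $x'$. So the whole of part b) reduces to: for every $x\in\cX$, $JM(\cX)_Y\subseteq\overline{J_zM(\cX)}$ at $x$, i.e.\ the ordinary condition $A_F$ (equivalently the inclusion associated to $W$) holds at every point of $\cX$. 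This in turn I would deduce from the hypothesis that $W$ holds for the pair $\cX_0,Y$ along $Y$: at points $x\notin Y$ the fibre $\cX_y$ is the total space in a product chart transverse to $Y$ so $\partial F/\partial y_\ell$ is already in $J_zM(\cX)$ (the $y$'s are just extra parameters and $Y\subseteq\cX$ forces $\partial F/\partial y_\ell$ to lie in the module generated by the $\partial F/\partial z_j$ up to the defining relations — more precisely at such points one argues as in the smooth-point case or cites that $A_F$ is automatic off the parameter axis), while at points $x\in Y$ the inclusion $JM(\cX)_Y\subseteq\overline{m_YJ_zM(\cX)}\subseteq\overline{J_zM(\cX)}$ is exactly what $W$ gives (Lemma \ref{L8}(1) plus $m_YJ_zM(\cX)\subseteq J_zM(\cX)$). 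Combining, $JM(\cX)_Y\subseteq\overline{J_zM(\cX)}$ holds at every $x\in\cX$, and feeding this back through the direct-sum decomposition yields part b).

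\textbf{Main obstacle.} The routine bookkeeping is the passage between an inclusion of modules and the corresponding inclusion of doubles; Propositions \ref{P2.13} and \ref{P3} handle this cleanly, so that is not where the difficulty lies. The genuinely delicate point is establishing $JM(\cX)_Y\subseteq\overline{J_zM(\cX)}$ (respectively the honest inclusion, not just the integral closure, in part a)) at points where $\cX$ may be singular but which are off the ``doubly singular'' locus — one must make sure that ``smoothness at one of the two points'' together with condition $W$ really forces the relative jacobian generators into the (closure of the) absolute jacobian module, and that no hypothesis of isolated singularities is secretly being used, since the whole point of this proposition is to drop that assumption from Proposition 4.2 of \cite{G1}. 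I would treat that step with care, most likely by a curve-criterion argument: pull back along an arbitrary analytic curve through the point, use that along a generic such curve $\cX$ is smooth (since we are off $\Sigma(\cX)\times_Y\Sigma(\cX)$, or off $Y$), and conclude via Nakayama exactly as in the proof of Lemma \ref{L8}.
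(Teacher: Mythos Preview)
Your overall architecture matches the paper's: part a) reduces to an honest inclusion of modules at a smooth point, and part b) uses Proposition \ref{P2.13} to split the double as a direct sum and reduces to $JM(\cX)_Y\subseteq\overline{J_zM(\cX)}$ at each of $x$ and $x'$ separately. That much is right.

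There is, however, a real gap in your justification of the pointwise inclusion $JM(\cX)_Y\subseteq\overline{J_zM(\cX)}$. Your case split ``$x\in Y$ versus $x\notin Y$'' does not work: points off $Y$ need not be smooth (the proposition is stated precisely to avoid any isolated-singularity hypothesis), so your claim that ``$A_F$ is automatic off the parameter axis'' or that one can argue ``as in the smooth-point case'' is unsupported. Your fallback curve-criterion sketch has the same problem --- being off $\Sigma(\cX)\times_Y\Sigma(\cX)$ only tells you one of the two points is smooth, whereas for part b) you need the inclusion at both $x$ and $x'$, and one of them may well lie in $\Sigma(\cX)\setminus Y$. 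The paper's fix is much simpler than anything you propose: condition $W$ at the origin is exactly the inclusion $JM(\cX)_Y\subseteq\overline{m_YJ_zM(\cX)}$ (this is \cite{G3}, not Lemma \ref{L8}(1)), and since this is an inclusion of coherent sheaves it holds on an entire neighborhood $\cU$ of $0$. So for every $x\in\cU$ --- smooth or singular, on $Y$ or off --- one has $JM(\cX)_Y\subseteq\overline{m_YJ_zM(\cX)}\subseteq\overline{J_zM(\cX)}$ at $x$, and part b) follows immediately from the direct-sum decomposition.

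For part a) your argument is correct but the paper's is crisper and worth knowing. The key observation (which you gesture at but do not isolate) is that condition $W$ forces the cosupports of $JM(\cX)$ and $J_zM(\cX)$ to coincide on $\cU$: indeed $JM(\cX)_Y\subseteq\overline{m_YJ_zM(\cX)}$ implies the two modules have the same generic rank, hence the same degeneracy locus. Thus $x\notin\Sigma(\cX)$ forces $J_zM(\cX)$ to have full rank $p$ at $x$, so $J_zM(\cX)_x=\cO_{\cX,x}^p=JM(\cX)_x$, and in particular $JM(\cX)_Y\subseteq J_zM(\cX)$ at $x$; taking doubles finishes. No appeal to continuity of tangent planes or to Cramer's rule is needed.
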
 

\begin{proof}

Condition $W$ holds for the pair $\cX_0,Y$ at $(0,0)\in Y$ if and only if $JM(\cX)_Y\subseteq \overline{m_YJM_z(\cX)}$ (\cite{G3}). We can choose a neighborhood $\cU$ of $(0,0)$ such that this inclusion holds on the neighborhood. So, this implies that cosupports of $JM(\cX)$ and $JM_z(\cX)$ are the same on this neighborhood. Then $z\in\Sigma(X_y)$ if and only if $(z,y)\in\Sigma(\cX)$.

	(a) By hypothesis $x=(z,y)$ is a smooth point of $\cX$, hence $z$ is a smooth point of $X_y$.  Then the two modules $JM(\cX)$, $JM_z(\cX)$ agree at $(z,y)$, because they are both free of the same rank.  Hence, $(JM(\cX)_Y)_D\subseteq(JM_z(\cX))_D$ at $(x,x)$.

	(b) Assume $(x,x')\in\cU\underset{Y}{\times}\cU$, with $x\neq x'$.

	 then Proposition \ref{P2.13} implies, at $(x,x')$:
\begin{center}
	$(JM(\cX)_Y)_D=((JM(\cX)_Y)_x\circ\pi_1)\oplus((JM(\cX)_Y)_{x'}\circ\pi_2)$\\ $\subseteq(\overline{(JM_z(\cX))_{x}}\circ\pi_1)\oplus (\overline{(JM_z(\cX))_{x'}}\circ\pi_2)=\overline{(JM_z(\cX))_D}$	
\end{center}	
\end{proof}

The next result generalizes Theorem 4.3 of \cite{G1} and states that the infinitesimal Lipschitz condition A holds generically along the parameter space $Y$.

\index{Genericity of the $iL_A$ condition}
\begin{theorem}[Genericity Theorem]\label{T2.2.8}
	Consider the setup \ref{setup2.1}. Then there exists a dense Zariski open subset $U$ of $Y$ such that the infinitesimal Lipschitz condition A holds for the pair $(\cX-Y,U\cap Y)$ along $Y$.
\end{theorem}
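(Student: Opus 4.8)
The goal is to produce a dense Zariski open $U \subseteq Y$ along which the $iL_A$ condition holds, i.e. $(JM(\cX)_Y)_D \subseteq \overline{(J_zM(\cX))_D}$ at every point $(x,x')$ lying over $U$. The natural strategy is to reduce the problem on $\cX \underset{Y}{\times} \cX$ to a statement about $\cX$ itself, using the structure results from Section \ref{sec1}, and then invoke a standard genericity theorem for integral closure of modules in a family (the kind of Bertini/generic-flatness argument that underlies the genericity of condition $W$, e.g.\ from \cite{G3}). Concretely, I would first recall that the cosupport of $(J_zM(\cX))_D$ on $\cX \underset{Y}{\times}\cX$ is, by Proposition \ref{P4} (family version), contained in $\Delta(\cX) \cup (\cX \times_Y \Sigma(JM(\cX))) \cup (\Sigma(JM(\cX)) \times_Y \cX)$, so it suffices to check the integral closure condition at points of this locus lying over a suitable $U$; away from it the inclusion is automatic.

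**Reduction to a condition on $\cX$.** The key observation is Proposition \ref{P2.13}: at a point $(x,x')$ with $x \ne x'$, $\overline{(J_zM(\cX))_D} = (\overline{J_zM(\cX)_x}\circ\pi_1)\oplus(\overline{J_zM(\cX)_{x'}}\circ\pi_2)$, and similarly $(JM(\cX)_Y)_D$ splits as the direct sum of its two "legs''. Hence for $x \ne x'$ the $iL_A$ inclusion at $(x,x')$ is equivalent to $JM(\cX)_Y \subseteq \overline{J_zM(\cX)}$ holding at $x$ and at $x'$ separately — that is, to condition $W$ (equivalently $A_F$) for the pair $\cX, Y$ at those points. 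On the diagonal, Proposition \ref{P7}(a) handles smooth points directly, so the only remaining points are diagonal points over the singular locus, and points over $Y$ itself. For the diagonal singular points I would again use that, after the column reductions in the proof of Proposition \ref{T2.9}, the matrix of $(J_zM(\cX))_D$ near a diagonal point is governed by a single copy of $[J_zM(\cX)]$ together with the diagonal ideal $I(\Delta)$, so integral closure membership there reduces to $JM(\cX)_Y \subseteq \overline{m_Y J_z M(\cX) + (\text{diagonal-type terms})}$ at the base point — a condition on $\cX$ near that point.

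**The genericity input.** Having reduced to a condition on $\cX$, I would apply the standard fact that $JM(\cX)_Y \subseteq \overline{J_zM(\cX)}$ (condition $A_F$, equivalently $W$ off the bad locus) holds at all points of a dense Zariski open subset of the ambient total space, and in particular can be arranged to hold at all points of $\cX$ lying over a dense Zariski open $U$ of $Y$. This is precisely the generic-condition-$W$ / generic-$A_F$ statement; it follows from upper-semicontinuity of the relevant multiplicities (the $\hat e$ or polar multiplicities of the pair) along $Y$ together with the fact that the condition holds on an open dense set, so its failure locus is a proper analytic subset of $\cX$ whose image in $Y$ is a proper analytic (hence removable) subset — after possibly shrinking, the image is contained in a proper Zariski closed subset and we take $U$ to be its complement. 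Combining: over $U$, condition $W$ holds at every point of $\cX$, so by Proposition \ref{P2.13} the $iL_A$ inclusion holds at every off-diagonal point over $U$; Proposition \ref{P7}(a) gives it at diagonal smooth points; and the diagonal singular points over $U$ are handled by the reduction above together with $W$. Hence $iL_A$ holds for $(\cX - Y, U \cap Y)$ along $Y$.

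**Main obstacle.** The delicate point is the behavior on the diagonal over the singular locus $\Sigma(\cX)$ (other than over $Y$), and more generally making the "reduction to a condition on $\cX$" uniform enough that a single Zariski open $U$ in $Y$ works for all fibers simultaneously. The off-diagonal part is clean because of the direct-sum splitting, but on the diagonal the double does not split, and one must control $(J_zM(\cX))_D$ there via the explicit generators of Proposition \ref{P3} and the column-reduction argument of Proposition \ref{T2.9}; checking that the resulting condition on $\cX$ is implied by generic $W$ (rather than something strictly stronger) is where the real work lies. I would expect to lean on the hypothesis that $W$ holds along $Y$ generically (as in \cite{G3}) to force the equality of cosupports of $JM(\cX)$ and $J_zM(\cX)$ near such points, exactly as in the proof of Proposition \ref{P7}, thereby reducing the diagonal case to the smooth-point computation already done.
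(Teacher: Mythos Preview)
Your reduction via Proposition~\ref{P2.13} and Proposition~\ref{P7} is fine as far as it goes: assuming condition $W$ generically along $Y$, those two results do dispose of all off-diagonal points and of all diagonal smooth points. But that leaves exactly the case that matters, namely the diagonal over $Y$ itself (the points $(0,y,0,y)$), and your handling of that case is where the gap lies. Your claim that near such a diagonal point ``integral closure membership reduces to $JM(\cX)_Y \subseteq \overline{m_Y J_zM(\cX) + (\text{diagonal-type terms})}$ at the base point --- a condition on $\cX$'' is not justified and is in fact not correct. The integral closure of $(J_zM(\cX))_D$ at a diagonal point is tested by curves $\phi:(\bC,0)\to(\cX\times_Y\cX,(x,x))$ whose two branches may separate, so the condition genuinely captures the relative behaviour of tangent hyperplanes at \emph{pairs} of nearby points; it does not collapse to a condition on a single copy of $\cX$. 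The column reduction of Proposition~\ref{T2.9} only works when $z_i\circ\pi_1-z_i\circ\pi_2$ is a unit, i.e.\ strictly off the diagonal, and your final suggestion --- that equality of cosupports of $JM(\cX)$ and $J_zM(\cX)$ would ``reduce the diagonal case to the smooth-point computation'' --- conflates a set-theoretic statement (where the rank drops) with an integral-closure statement.

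The paper takes a completely different route for this remaining case. It passes to the normalized blow-up $N$ of $\cX\times_Y\cX\times\bP^{2p-1}$ along the ideal $\rho((J_zM(\cX))_D)$, so that the integral closure question becomes membership in a locally principal ideal $(u'^r)$. Proposition~\ref{P7} reduces one to components of the exceptional divisor lying over $Y$; on a generic $U\subseteq Y$ these map onto $Y$, so one can choose local coordinates $(y',u',x')$ at a smooth point $q$ of $E$ with $u'$ defining $E$ and $y'$ pulled back from $Y$. Differentiating the pulled-back defining equation with respect to $y'$ and using the chain rule expresses $\pi^*\rho\bigl((\partial F/\partial y)_D\bigr)$ as a sum of terms $w_i+\tilde w_i$, where the $w_i$ are visibly in $\pi^*\rho((J_zM(\cX))_D)$ and the $\tilde w_i$ involve $\partial(z_i\circ\bar\pi_1)/\partial y' - \partial(z_i\circ\bar\pi_2)/\partial y'$. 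The crux is then the observation that, because $y'$ and $u'$ are independent coordinates, this difference has the same $u'$-order as $z_i\circ\bar\pi_1-z_i\circ\bar\pi_2$ itself, so along any test curve $\tilde w_i$ is a unit multiple of an element $\hat w_i\in\pi^*\rho((J_zM(\cX))_D)$. That order comparison is the genuinely new idea, and nothing in your outline substitutes for it.
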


\begin{proof}
	We can write a matrix of generators of $(JM_z(\cX))_D$ as $$[(JM_z(\cX))_D]=\left[\begin{matrix}
	JM_z(\cX)\circ\pi_1           &          0\\
	JM_z(\cX)\circ\pi_2          &  (0,(z_i\circ\pi_1-z_i\circ\pi_2)(\frac{\partial F}{\partial z_s}\circ\pi_2))_{i,s=1}^n
	\end{matrix}\right]$$ whose entries are in $\cO_{\cX\underset{Y}{\times} \cX}$. Since $(JM_z(\cX))_D$ is a sheaf of submodules of $\cO_{\cX\underset{Y}{\times} \cX}^{2p}$ then, choosing $S_1,...,S_{2p}$ as the homogeneous coordinates on $\bP^{2p-1}$, we can consider the sheaf of ideals of $\cO_{\cX\underset{Y}{\times} \cX\times\bP^{2p-1}}$ induced by $(JM_z(\cX))_D$, namely $\rho((JM_z(\cX))_D)$, which is generated by the entries of the vector $$[\begin{matrix}
	1  &  \frac{S_2}{S_1} & ... &\frac{S_{2p}}{S_1} 
	\end{matrix}]\cdot[(JM_z(\cX))_D]$$ on the chart $U_1:=\{[S_1,...,S_{2p}]\in\bP^{2p-1}\mid S_1\neq 0\}$ which is a dense Zariski open subset of $\bP^{2p-1}$.
	
	Denote by $N:=NB_{\rho((JM_z(\cX))_D)}(\cX\underset{Y}{\times}\cX\times\bP^{2p-1})$ the normalized blow-up of $\cX\underset{Y}{\times}\cX\times\bP^{2p-1}$ with respect to the sheaf of ideals $\rho((JM_z(\cX))_D)$ of $\cO_{\cX\underset{Y}{\times}\cX\times\bP^{2p-1}}$. Consider the projection map $\pi: N\rightarrow \cX\underset{Y}{\times}\cX\times\bP^{2p-1}$ and let $E\subseteq N$ be the normalized exceptional divisor. To prove this theorem we use the module criterion (see Proposition 3.5 in \cite{GK}), i.e, in order to verify the condition $(JM(\cX)_Y)_D\subseteq\overline{(JM_z(\cX))_D}$ in a dense Zariski open subset $U$ of $Y$, it suffices to check that on each component of the exceptional divisor, the pullback of the element $\rho((\frac{\partial F}{\partial y})_D)$ to the normalized blow-up is in the pullback of $\rho((JM_z(\cX))_D)$, for every coordinate $y$ in the parameter space.
	
	Let $\mathbf{p}: \cX\subseteq\bC^n\times Y\rightarrow Y$ be the projection onto $Y$. For each $\ell\in\{1,2\}$ consider the projection map $p_{\ell}: \cX\underset{Y}{\times}\cX\times\bP^{2p-1}\rightarrow \cX$ on the $\ell^{\mbox{th}}$ factor and $\bar{\pi_{\ell}}:N\rightarrow \cX$ given $\bar{\pi_{\ell}}:=p_{\ell}\circ\pi$.
	
	\begin{center}
		\begin{tikzpicture}
		\matrix (m) [matrix of math nodes,row sep=3em,column sep=3em,minimum width=2em]
		{
			&	& N &  & \\
			Y	&	\cX  &  \cX\underset{Y}{\times}\cX\times\bP^{2p-1} &   \cX & Y \\
			
			&	& \cX\underset{Y}{\times}\cX  &  & \\};
		\path[-stealth]
		(m-1-3) edge node [right] {$\pi$} (m-2-3)
		(m-2-3) edge node [right] {$\bar{p}$} (m-3-3)
		(m-1-3) edge node [above] {$\bar{\pi_1}$} (m-2-2)
		(m-1-3) edge node [right] {$\bar{\pi_2}$} (m-2-4)
		(m-2-3) edge node [above] {$p_1$} (m-2-2)
		(m-2-3) edge node [above] {$p_2$} (m-2-4)
		(m-3-3) edge node [left] {$\pi_1$} (m-2-2)
		(m-3-3) edge node [right] {$\pi_2$} (m-2-4)
		(m-2-2) edge node [above] {$\mathbf{p}$} (m-2-1)
		(m-2-4) edge node [above] {$\mathbf{p}$} (m-2-5);
		\end{tikzpicture}
	\end{center}
	
	By Proposition \ref{P7} we need only consider those components of the exceptional divisor which project to $Y$ under the map to $\cX\underset{Y}{\times}\cX$. Since we are working over a dense Zariski open subset of $Y$ we may assume that every such component maps surjectively onto $Y$. Since $N$ is a normal space and $E$ has codimension $1$ in $N$ then we can work at a point $q$ of the normalized exceptional divisor $E$ such that $E$ is smooth at $q$, $N$ is smooth at $q$ and the projection to $Y$ is a submersion at $q$. Thus we can choose coordinates $(y',u',x')$ such that $y'_i=y_i\circ\mathbf{p}$, $i\in\{1,...,k\}$, $u'$ defines $E$ locally with reduced structure and $\frac{\partial u'}{\partial y'_i}=0$, $i\in\{1,...,k\}$, i.e, $u'$ and $y'$ are independent coordinates. Working on the subset $U_1\subseteq\bP^{2p-1}$, since $\cX$ is defined by $F$ then the germ of $$[\begin{matrix}
	1 & \frac{S_2}{S_1} & ... & \frac{S_{2p}}{S_1}
	\end{matrix}]\cdot \left[\begin{matrix}
	F_1\circ p_1 \\
	\vdots \\
	F_p\circ p_1 \\
	F_1\circ p_2 \\
	\vdots \\
	F_p\circ p_2
	\end{matrix}\right]=0$$ is identically zero on $\cX\underset{Y}{\times}\cX\times\bP^{2p-1}$. Pull this back to $N$ by $\pi$ and take the partial derivative with respect to $y'$ at $q$. We get by the chain rule:
	
	$$[\begin{matrix}
	1 & \frac{S_2}{S_1} & ... & \frac{S_{2p}}{S_1}
	\end{matrix}]\cdot \left[\begin{matrix}
	\frac{\partial F_1}{\partial y}\circ\bar{\pi_1}+\sum\limits_{i=1}^{n}\left(\frac{\partial F_1}{\partial z_i}\circ\bar{\pi_1}\right)\left(\frac{\partial(z_i\circ\bar{\pi_1})}{\partial y'}\right) \\
	\vdots \\
	\frac{\partial F_p}{\partial y}\circ\bar{\pi_1}+\sum\limits_{i=1}^{n}\left(\frac{\partial F_p}{\partial z_i}\circ\bar{\pi_1}\right)\left(\frac{\partial(z_i\circ\bar{\pi_1})}{\partial y'}\right) \\
	\frac{\partial F_1}{\partial y}\circ\bar{\pi_2}+\sum\limits_{i=1}^{n}\left(\frac{\partial F_1}{\partial z_i}\circ\bar{\pi_2}\right)\left(\frac{\partial(z_i\circ\bar{\pi_2})}{\partial y'}\right) \\
	\vdots \\
	\frac{\partial F_p}{\partial y}\circ\bar{\pi_2}+\sum\limits_{i=1}^{n}\left(\frac{\partial F_p}{\partial z_i}\circ\bar{\pi_2}\right)\left(\frac{\partial(z_i\circ\bar{\pi_2})}{\partial y'}\right) \\
	\end{matrix}\right]=0 \eqno{(\star)}$$
	
	Since $F_j\circ\bar{\pi_1}=F_j\circ\bar{\pi_2}=0$ for all $j\in\{1,...,p\}$ then there is no term involving the derivatives of the homogeneous coordinates with respect to $y'$. Notice that all $z_i$ vanish along $Y$, so $z_i\circ\bar{\pi_1}$ and $z_i\circ\bar{\pi_2}$ vanish along $E$ at $q$, then we can assume that the order of vanishing of $z_1\circ\bar{\pi_{\ell}}$ is minimal among $\{z_i\circ\bar{\pi_{\ell}}\}$ and that the strict transform of $z_1\circ\bar{\pi_{\ell}}$ does not pass through $q$, $\forall \ell\in\{1,2\}$.
	
	By equation $(\star)$ we have that $\rho((\frac{\partial F}{\partial y})_D)\circ\pi=[\begin{matrix}
	1 & \frac{S_2}{S_1} & ... & \frac{S_{2p}}{S_1}
	\end{matrix}]\cdot\left[\begin{matrix}
	\frac{\partial F_1}{\partial y}\circ\bar{\pi_1}\\
	\vdots\\
	\frac{\partial F_p}{\partial y}\circ\bar{\pi_1}\\
	\frac{\partial F_1}{\partial y}\circ\bar{\pi_2}\\
	\vdots \\
	\frac{\partial F_p}{\partial y}\circ\bar{\pi_2}
	\end{matrix}\right]=-v$, where $$v:=[\begin{matrix}
	1 & \frac{S_2}{S_1} & ... & \frac{S_{2p}}{S_1}
	\end{matrix}]\cdot\left[\begin{matrix}
	\sum\limits_{i=1}^{n}\left(\frac{\partial F_1}{\partial z_i}\circ\bar{\pi_1}\right)\left(\frac{\partial(z_i\circ\bar{\pi_1})}{\partial y'}\right) \\
	\vdots \\
	\sum\limits_{i=1}^{n}\left(\frac{\partial F_p}{\partial z_i}\circ\bar{\pi_1}\right)\left(\frac{\partial(z_i\circ\bar{\pi_1})}{\partial y'}\right) \\
	\sum\limits_{i=1}^{n}\left(\frac{\partial F_1}{\partial z_i}\circ\bar{\pi_2}\right)\left(\frac{\partial(z_i\circ\bar{\pi_2})}{\partial y'}\right) \\
	\vdots \\
	\sum\limits_{i=1}^{n}\left(\frac{\partial F_p}{\partial z_i}\circ\bar{\pi_2}\right)\left(\frac{\partial(z_i\circ\bar{\pi_2})}{\partial y'}\right) \\
	\end{matrix}\right].$$ 
	
	In order to simplify the notation, for each $i\in\{1,...n\}$ define 
	
	$$w_i:=[\begin{matrix}
	1 & \frac{S_2}{S_1} & ... & \frac{S_{2p}}{S_1}
	\end{matrix}]\cdot\left[\begin{matrix}
	\left(\frac{\partial F_1}{\partial z_i}\circ\bar{\pi_1}\right)\left(\frac{\partial(z_i\circ\bar{\pi_1})}{\partial y'}\right) \\
	\vdots \\
	\left(\frac{\partial F_p}{\partial z_i}\circ\bar{\pi_1}\right)\left(\frac{\partial(z_i\circ\bar{\pi_1})}{\partial y'}\right) \\
	\left(\frac{\partial F_1}{\partial z_i}\circ\bar{\pi_2}\right)\left(\frac{\partial(z_i\circ\bar{\pi_1})}{\partial y'}\right) \\
	\vdots \\
	\left(\frac{\partial F_p}{\partial z_i}\circ\bar{\pi_2}\right)\left(\frac{\partial(z_i\circ\bar{\pi_1})}{\partial y'}\right) \\
	\end{matrix}\right]$$ and 
	
	$$\tilde{w}_i:=-[\begin{matrix}
	1 & \frac{S_2}{S_1} & ... & \frac{S_{2p}}{S_1}
	\end{matrix}]\cdot\left[\begin{matrix}
	0 \\
	\vdots \\
	0 \\
	\left(\frac{\partial F_1}{\partial z_i}\circ\bar{\pi_2}\right)\left(\frac{\partial(z_i\circ\bar{\pi_1})}{\partial y'}-\frac{\partial(z_i\circ\bar{\pi_2})}{\partial y'}\right) \\
	\vdots \\
	\left(\frac{\partial F_p}{\partial z_i}\circ\bar{\pi_2}\right)\left(\frac{\partial(z_i\circ\bar{\pi_1})}{\partial y'}-\frac{\partial(z_i\circ\bar{\pi_2})}{\partial y'}\right) \\
	\end{matrix}\right].$$
	
	Clearly $v=\sum\limits_{i=1}^{n}(w_i+\tilde{w}_i)$. For every $i\in\{1,...,n\}$ we have that \\$w_i=\frac{\partial(z_i\circ\bar{\pi_1})}{\partial y'}\pi^*(\rho((\frac{\partial F}{\partial z_i})_D))\in\pi^*(\rho((JM_z(\cX))_D))$. 
	
	Now it suffices to check that $\tilde{w}_i\in\pi^*(\rho((JM_z(\cX))_D))$, $\forall i\in\{1,...,n\}$. Since the pullback of the ideal $\rho((JM_z(\cX))_D)$ is locally principal then we can work at a point $q$ such that $\pi^*(\rho((JM_z(\cX))_D))$ is generated by $u'^{r}$, a power of $u'$. Since $\cO_{N,q}$ is a normal ring then Lemma 1.12 of \cite{LT} implies that the ideal $\pi^*(\rho((JM_z(\cX))_D))$ is integrally closed, i.e, $\overline{\pi^*(\rho((JM_z(\cX))_D))}=\pi^*(\rho((JM_z(\cX))_D))$. So, it is enough to prove that $\tilde{w}_i\in\overline{\pi^*(\rho((JM_z(\cX))_D))}$, for all $i\in\{1,...,n\}$. Let $i\in\{1,...,n\}$ be arbitrary. We use the curve criterion. Let $\tilde{\phi}:(\bC,0)\rightarrow(N,q)$ be an analytic curve. We can choose $\tilde{\phi}$ such that $\phi:(\bC,0)\rightarrow(\cX\underset{Y}{\times}\cX\times\bP^{2p-1},\pi(q))$ given by $\phi:=\pi\circ\tilde{\phi}$ meets the dense Zariski open subset $U_1$, $\phi=(\phi_1,\phi_2,\psi)$ and $\psi=\left[1,\frac{\psi_2}{\psi_1},...,\frac{\psi_{2p}}{\psi_1}\right]$. Further, $\tilde{\phi}$ can be chosen such that $\tilde{\phi}$ is transverse to the component so that $u'\circ\tilde{\phi}=t$, where $t$ is the generator of the maximal ideal of $\cO_{\bC,0}$. Hence, the pullback of the ideal $\pi^*(\rho((JM_z(\cX))_D))$ is generated by $t^r$. Consider the element

	$$\hat{w}_i:=-[\begin{matrix}
	1 & \frac{S_2}{S_1} & ... & \frac{S_{2p}}{S_1}
	\end{matrix}]\cdot\left[\begin{matrix}
	0 \\
	\vdots \\
	0 \\
	\left(\frac{\partial F_1}{\partial z_i}\circ\bar{\pi_2}\right)(z_i\circ\bar{\pi_1}-z_i\circ\bar{\pi_2}) \\
	\vdots \\
	\left(\frac{\partial F_p}{\partial z_i}\circ\bar{\pi_2}\right)(z_i\circ\bar{\pi_1}-z_i\circ\bar{\pi_2}) \\
	\end{matrix}\right].$$
	
	\noindent Notice that $$\hat{w}_i=-\pi^*(\rho((0,(z_i\circ\pi_1-z_i\circ\pi_2)(\frac{\partial F}{\partial z_i}\circ\pi_2))_{\in(JM_z(\cX)_D)}))\in\pi^*(\rho((JM_z(\cX))_D)).$$ Since $y'$ and $u'$ are independent coordinates then the order of $\frac{\partial(z_i\circ\bar{\pi_1})}{\partial y'}-\frac{\partial(z_i\circ\bar{\pi_2})}{\partial y'}$ in $u'$ is the same as the order of $z_i\circ\bar{\pi_1}-z_i\circ\bar{\pi_2}$ in $u'$. Then the pullback of both have the same order in $t$, so there exists an invertible element $\alpha_i\in\cO_{\bC,0}$ such that $$\tilde{\phi}^*\left(\frac{\partial(z_i\circ\bar{\pi_1})}{\partial y'}-\frac{\partial(z_i\circ\bar{\pi_2})}{\partial y'}\right)=\alpha_i(\tilde{\phi}^*(z_i\circ\bar{\pi_1}-z_i\circ\bar{\pi_2})).$$
	Hence, $\tilde{\phi}^*(\tilde{w}_i)=-\sum\limits_{j=1}^{p}\left(\frac{\partial F_j}{\partial z_i}\circ\phi_2\right)\left(\tilde{\phi}^*\left(\frac{\partial(z_i\circ\bar{\pi_1})}{\partial y'}-\frac{\partial(z_i\circ\bar{\pi_2})}{\partial y'}\right)\right)\frac{\psi_{p+j}}{\psi_1}\\=\alpha_i\tilde{\phi}^*(\hat{w}_i)\in\tilde{\phi}^*(\pi^*(\rho((JM_z(\cX))_D)))$.
	
	Therefore, $\tilde{w}_i\in\overline{\pi^*(\rho((JM_z(\cX))_D))}$, for all $i\in\{1,...,n\}$.	
\end{proof}

In general, we do not have an answer about the genericity of the $iL_{m_Y}$ condition. However, in the case that $\cX$ is a $1$-parameter family defined by a map $F$ which has all components weighted homogeneous polynomials of the same type, then it is easy to conclude that $JM(\cX)_Y\subseteq m_YJM_z(\cX)$ at any point $x=(z,y)\in \cX$, with $y\neq 0$. In particular, $iL_{m_Y}$ is generic.

\section{The genericity theorem applied in a family of hyperplane sections}

Given $X$ an analytic variety with isolated singularity at the origin, we can consider the sections of $X$ by hyperplanes. One natural question is if there exists a generic set of hyperplanes for which the family of hyperplanes sections satisfies the infinitesimal Lipschitz condition A. We show this is true. First, we recall some important notions in order to make precise statements. Fore more details see \cite{G6}.

Let us work on the Grassmanian modification of $X=f^{-1}(0)$, defined by an analytic map $f:(\bC^n,0)\rightarrow(\bC^p,0)$, $X$ with isolated singularity at the origin, $n\geq p$. 

For each $y=[y_1,...,y_n]\in\bP^{n-1}$, consider the hyperplane on $\bC^n$ given by $$H_y:=\{z=(z_1,...,z_n)\in\bC^n\mid z\cdot y:=\sum\limits_{i=1}^{n}z_iy_i=0\}.$$ Let $E_{n-1}$ be the canonical bundle over $\bP^{n-1}$, i.e, $$E_{n-1}:=\{(z,y)\in\bC^n\times\bP^{n-1}\mid z\in H_y\}.$$ Consider the projection map $\beta:E_{n-1}\rightarrow\bC^n$. We call $\tilde{X}:=\beta^{-1}(X)$ the $\mathbf{(n-1)}$\textbf{-Grassmanian modification of} $\mathbf{\mathbf{X}}$. Here we simply refer to the $(n-1)$-modification as the Grassmanian modification of $X$. We can see $\bP^{n-1}$ embedded into $E_{n-1}$ as the zero section of the bundle $E_{n-1}$, which allows us to think of $0\times\bP^{n-1}$ as a stratum of $\tilde{X}$. Note that the projection to $0\times\bP^{n-1}$ makes $\tilde{X}$ a family of analytic sets with $0\times\bP^{n-1}$ as the parameter space, which we denote by $Y$. The members of this family are just $\{H_y\cap X\}$ as $y$ varies in $\bP^{n-1}$.

Consider the chart $U_n:=\{[y_1,...,y_n]\in\bP^{n-1}\mid y_n\neq 0\}\\=\{[y_1,...,y_{n-1},-1]\mid (y_1,...,y_{n-1})\in\bC^{n-1}\}\equiv\bC^{n-1}$ which is a dense Zariski open subset of $\bP^{n-1}$. Working on the dense Zariski open subset $E_{n-1}\cap(\bC^n\times U_n)$ of $E_{n-1}$, we have local coordinates given by $(z_1,...,z_n,y_1,...,y_{n-1})$. In these coordinates, the projection map $\beta$ satisfies the equation \\ $\beta(z_1,...,z_n,y_1,...,y_{n-1})=(z_1,...,z_{n-1},\sum\limits_{i=1}^{n-1}y_iz_i)$.

Consider the analytic map $F:=f\circ\beta: E_{n-1}\cap(\bC^n\times U_n)  \rightarrow  \bC^p$. Thus, $F^{-1}(0)=\beta^{-1}(f^{-1}(0))=\beta^{-1}(X)=\tilde{X}$, hence $\tilde{X}$ is defined by $F$. For each $y=(y_1,...,y_{n-1})\\ \equiv[y_1,...,y_{n-1},-1]\in U_n$, let $F_y:\bC^n\rightarrow\bC^p$ given by $F_y(z):=F(z,y)$ and let $\tilde{X}_y:=F^{-1}(0)$. In these coordinates, clearly $\tilde{X}_y=(f^{-1}(0))\cap H_y=X\cap H_y$. Therefore, $F$ defines the family of sections of $X$ by the hyperplanes $H_y$, as $y$ varies on the dense Zariski open subset $U_n$ of $\bP^{n-1}$.

The next result generalizes Theorem 4.4 of \cite{G1}.

\begin{theorem}
 There exists a non-empty Zariski open subset $U$ of $\bP^{n-1}$, such that the $iL_A$ condition holds for the pair $(\tilde{X}-U,U)$ along $U$.
\end{theorem}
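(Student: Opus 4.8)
The plan is to reduce the statement to the Genericity Theorem \ref{T2.2.8} by restricting to an affine chart of $\bP^{n-1}$, where the Grassmanian modification $\tilde{X}$ is literally a family of the kind treated in Setup \ref{setup2.1}.

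First I would work on the chart $U_n=\{[y_1,\dots,y_n]\in\bP^{n-1}\mid y_n\neq 0\}$, a nonempty Zariski open subset of $\bP^{n-1}$, with the local coordinates $(z_1,\dots,z_{n-1},y_1,\dots,y_{n-1})$ recalled above, in which $\beta(z,y)=(z_1,\dots,z_{n-1},\sum_{i=1}^{n-1}y_iz_i)$ and $\tilde{X}$ is cut out by the analytic map $F=f\circ\beta$ to $\bC^p$. Since $F(0,y)=f(\beta(0,y))=f(0)=0$, the parameter axis $Y:=0\times U_n\cong 0\times\bC^{n-1}$ is contained in $\tilde{X}$, and the projection $\tilde{X}\to Y$ makes the restricted modification a family of exactly the type required by Setup \ref{setup2.1} (its numerical hypothesis being satisfied because $n\ge p$; when $n=p$ one has $\tilde{X}=Y$ and there is nothing to prove, so we may assume $n>p$). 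The members of this family are the hyperplane sections $X\cap H_y$, $y\in U_n$.

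Next I would apply Theorem \ref{T2.2.8} to this family. It produces a dense Zariski open subset $U$ of $Y=0\times U_n$ such that the $iL_A$ condition --- the integral closure condition $(JM(\tilde{X})_Y)_D\subseteq\overline{(J_zM(\tilde{X}))_D}$ at every point of $\tilde{X}\underset{Y}{\times}\tilde{X}$ lying over $U$ --- holds for the pair $(\tilde{X}-Y,U)$ along $Y$. Because $U_n$ is Zariski open in $\bP^{n-1}$ and $U$ is Zariski open in $U_n$, the set $U$ is a nonempty Zariski open subset of $\bP^{n-1}$; identifying $\bP^{n-1}$ with the zero section $Y$ of $E_{n-1}$ turns this $U$ into the subset demanded by the statement, and the conclusion of \ref{T2.2.8} becomes precisely the assertion that $iL_A$ holds for the pair $(\tilde{X}-U,U)$ along $U$.

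I expect the only nontrivial point --- and hence the main obstacle --- to be checking carefully that the restricted family really is an instance of Setup \ref{setup2.1}: that on the chart $\tilde{X}$ is the zero locus of a genuine analytic map to some $\bC^{p'}$ (true, with $F=f\circ\beta$), that $Y$ is embedded there as $0\times\bC^{n-1}$ and lies in $\tilde{X}$ (checked above), and that the jacobian modules $JM(\tilde{X})$, $JM(\tilde{X})_Y$, $J_zM(\tilde{X})$ entering the definition of $iL_A$ are the ones attached to $F$ in these coordinates; once that identification is in place, no argument beyond \ref{T2.2.8} (and, within its proof, Proposition \ref{P7}) is needed. If instead one wants a single Zariski open set meeting every point of $\bP^{n-1}$, one runs the same reduction on each standard chart $U_i=\{y_i\neq 0\}$, obtaining dense Zariski opens $U^{(i)}\subseteq U_i$ on which $iL_A$ holds, and takes $U=\bigcup_{i=1}^nU^{(i)}$, using that $iL_A$ is a pointwise integral closure condition; for the existence assertion stated, a single chart already suffices.
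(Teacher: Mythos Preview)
Your approach is correct and matches the paper's: restrict to an affine chart of $\bP^{n-1}$, identify $\tilde X$ there as a family of the type in Setup~\ref{setup2.1}, and invoke the Genericity Theorem~\ref{T2.2.8}. The one substantive difference is that the paper, before applying Theorem~\ref{T2.2.8}, first passes to the Zariski open set $U'\subseteq U_n$ of hyperplanes that are \emph{not} limiting tangent hyperplanes of $(X,0)$ and shows (using Lemma~4.1(a) of \cite{GK}, which gives $\overline{JM(X)_{H_y}}=\overline{JM(X)}$ for such $H_y$) that each fiber $\tilde X_y=X\cap H_y$ has an isolated singularity at the origin. You omit this step; it is not formally required by the statement of Theorem~\ref{T2.2.8} (Setup~\ref{setup2.1} imposes no isolated-singularity hypothesis), but it is what guarantees, over $U'$, that $\Sigma(\tilde X)\subseteq Y$ and hence that condition~$W$ holds---the hypothesis on which Proposition~\ref{P7}, and through it the proof of Theorem~\ref{T2.2.8}, rests. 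Your proposal can be completed either by inserting this verification or by appealing to the generic validity of condition~$W$ along $Y$.
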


\begin{proof}
	As we have seen, $\tilde{X}$ is a family defined by the above analytic map $F$. 
	
	Let us prove that $\tilde{X}_y$ has isolated singularity at $(0,y)$ for all $y$ varying in a non-empty Zariski open subset $U'$ of $U_n$. In fact, we already know that the set of limiting tangent hyperplanes of $X$ at the origin is a Zariski proper closed subset of $\bP^{n-1}$. Call this set $W$. Let \\$U':=U_n-(W\cap U_n)$. Since $\bP^{n-1}$ is irreducible then $U_n$ is irreducible. Since $U_n$ is a dense subset of $\bP^{n-1}$ then $W\cap U_n$ also is a proper Zariski closed subset of $U_n$, hence $U'$ is a dense Zariski open subset of $U_n$. Let $y\in U'$. We want to show that $(0,y)$ is an isolated singularity of $\tilde{X}_y$. By hypothesis, $H_y$ is not a limiting tangent hyperplane of $X$ at the origin, and by Lemma 4.1 (a) of \cite{GK} we have that $\overline{JM(X)_{H_y}}=\overline{JM(X)}$ at the origin, where $JM(X)_{H_y}:=\{\frac{\partial f}{\partial v}\mid v\in H_y\}$. Thus, in a neighborhood of the origin, the generic rank of $JM(X)$ and $JM(X)_{H_y}$ is the same. Thus, if we take $z$ in this neighborhood, such that $z\in H_y$, $z\neq0$ then the generic rank of $JM(\tilde{X}_y)=JM(X\cap H_y)$ at $z$ is the generic rank of $JM(X)_{H_y}$ at $z$, which is the generic rank of $JM(X)$ at $z$. Since $z\neq 0$ and $X$ has isolated singularity at the origin then we can choose this neighborhood so that $z$ is a non-singular point of $X$, which implies that $z$ is not a singular point of $\tilde{X}_y$. Therefore, $\tilde{X}_y$ has isolated singularity at the origin, for all $y\in U'$.
	
	Now, the existence of $U$ follows from Theorem \ref{T2.2.8}.
\end{proof}

Let us go back to the discussion before the last theorem. We have seen that $\tilde{X}$ is defined by the map $F:E_{n-1}\cap(\bC^n\times U_n)\rightarrow\bC^p$ given by $F(z,y)=f\circ\beta(z,y)$. From the chain rule we have $\frac{\partial F}{\partial y_i}=z_i\left(\frac{\partial f}{\partial z_n}\circ\beta\right)$ and $\frac{\partial F}{\partial z_i}=\frac{\partial f}{\partial z_i}\circ\beta+\sum\limits_{j=1}^{n-1}y_j\left(\frac{\partial f}{\partial z_n}\circ\beta\right)$, for all $i\in\{1,...,n-1\}$, and $\frac{\partial F}{\partial z_n}=0$. Thus, we have immediately the next result, which is a generalization of Corollary 4.5 of \cite{G1}.

\begin{corollary}
	The point $(0,P)\in E_{n-1}\cap(\bC^n\times U_n)$ belongs to the Zariski open subset of the last theorem if and only if $\left(z_i(\frac{\partial f}{\partial z_n}\circ\beta)\right)_D\in\overline{(JM_z(\tilde{X}))_D}$ at $(0,P)$, for all $i\in\{1,...,n-1\}$.
\end{corollary}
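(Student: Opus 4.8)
The plan is to unwind the definition of the Zariski-open set $U$ furnished by the previous theorem and then to reduce the inclusion of doubles to a statement about the finitely many generators of $JM(\tilde{X})_Y$, using Propositions \ref{P3} and \ref{P12}. By construction $U$ is the (dense Zariski-open) locus of parameters at which $iL_A$ holds, so $(0,P)\in U$ means, by definition, that $(JM(\tilde{X})_Y)_D\subseteq\overline{(JM_z(\tilde{X}))_D}$ at the diagonal point $((0,P),(0,P))$ of $\tilde{X}\underset{Y}{\times}\tilde{X}$; and the chain-rule identities recorded just before the corollary show that $JM(\tilde{X})_Y$ is generated by $\frac{\partial F}{\partial y_i}=z_i\bigl(\frac{\partial f}{\partial z_n}\circ\beta\bigr)$, $i=1,\dots,n-1$. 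The forward implication is then immediate, since each $\bigl(z_i(\frac{\partial f}{\partial z_n}\circ\beta)\bigr)_D=\bigl(\frac{\partial F}{\partial y_i}\bigr)_D$ is one of the generators of $(JM(\tilde{X})_Y)_D$.

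For the converse, suppose $\bigl(\frac{\partial F}{\partial y_i}\bigr)_D\in\overline{(JM_z(\tilde{X}))_D}$ at $((0,P),(0,P))$ for all $i$. By Proposition \ref{P3} (first generating set), $(JM(\tilde{X})_Y)_D$ is generated over $\cO_{\tilde{X}\underset{Y}{\times}\tilde{X}}$ by the $\bigl(\frac{\partial F}{\partial y_i}\bigr)_D$ together with the ``difference'' elements $\bigl(0,(z_j\circ\pi_1-z_j\circ\pi_2)(\frac{\partial F}{\partial y_i}\circ\pi_2)\bigr)$. The first family is covered by hypothesis. For the second, Proposition \ref{P12}(1) gives $\frac{\partial F}{\partial y_i}\in\overline{JM_z(\tilde{X})}$ at $(0,P)$, and composing with $\pi_2$ (compatibility of integral closure with analytic pull-back, via the curve criterion) yields $\frac{\partial F}{\partial y_i}\circ\pi_2\in\overline{JM_z(\tilde{X})\circ\pi_2}$ at $((0,P),(0,P))$; on the other hand, Proposition \ref{P3} shows that, for each fixed $j$, the module $(JM_z(\tilde{X}))_D$ contains the whole submodule $\{(0,(z_j\circ\pi_1-z_j\circ\pi_2)w):w\in JM_z(\tilde{X})\circ\pi_2\}$, and a one-line curve-criterion argument then shows that $\overline{(JM_z(\tilde{X}))_D}$ contains $\bigl(0,(z_j\circ\pi_1-z_j\circ\pi_2)w\bigr)$ for every $w$ in $\overline{JM_z(\tilde{X})\circ\pi_2}$, in particular for $w=\frac{\partial F}{\partial y_i}\circ\pi_2$. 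Hence every generator of $(JM(\tilde{X})_Y)_D$ lies in $\overline{(JM_z(\tilde{X}))_D}$, so $iL_A$ holds at $((0,P),(0,P))$, i.e.\ $(0,P)\in U$.

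The only genuinely delicate step is this reduction in the converse direction: the double $(JM(\tilde{X})_Y)_D$ carries the extra ``difference'' generators of Proposition \ref{P3}, and one has to see that these are automatically integrally dependent on $(JM_z(\tilde{X}))_D$ once the $\bigl(\frac{\partial F}{\partial y_i}\bigr)_D$ are. This is where Proposition \ref{P12} (which upgrades the hypothesis to $\frac{\partial F}{\partial y_i}\in\overline{JM_z(\tilde{X})}$) and the concrete description of $JM_z(\tilde{X})$ (whose generators $\frac{\partial F}{\partial z_k}$ pull back by $\pi_2$ to generators of $JM_z(\tilde{X})\circ\pi_2$) are both used; I would isolate this reduction as a short preparatory lemma so that the proof of the corollary itself becomes a one-line unwinding of the definitions together with the chain rule.
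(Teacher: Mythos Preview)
Your proof is correct. The paper itself offers no argument beyond the sentence ``Thus, we have immediately the next result,'' relying only on the chain-rule identities $\frac{\partial F}{\partial y_i}=z_i\bigl(\frac{\partial f}{\partial z_n}\circ\beta\bigr)$ displayed just before the corollary. Your write-up therefore supplies exactly the details the paper suppresses.

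The forward direction is, as you say, trivial. What you add is the converse: from $(\frac{\partial F}{\partial y_i})_D\in\overline{(JM_z(\tilde X))_D}$ for each $i$ to the full inclusion $(JM(\tilde X)_Y)_D\subseteq\overline{(JM_z(\tilde X))_D}$. The point that needs justification is precisely that the extra ``difference'' generators of Proposition~\ref{P3} come along for free, and your route through Proposition~\ref{P12} (to get $\frac{\partial F}{\partial y_i}\in\overline{JM_z(\tilde X)}$) followed by the curve criterion is the right one. In fact what you have proved is the general statement: if $h_D\in\overline{M_D}$ then $(\alpha h)_D\in\overline{M_D}$ for every $\alpha\in\cO_X$; equivalently, $N_D\subseteq\overline{M_D}$ as soon as $(h_j)_D\in\overline{M_D}$ for a generating set $\{h_j\}$ of $N$. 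Your suggestion to isolate this as a preparatory lemma is a good one, since it is a fact about doubles in general and not special to the Grassmanian setting.

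One small caveat on phrasing: you identify $U$ with ``the locus of parameters at which $iL_A$ holds''. Strictly speaking the previous theorem only asserts the existence of \emph{some} dense Zariski-open $U$ on which $iL_A$ holds, so for the ``if'' direction of the corollary one is implicitly taking $U$ to be that full locus. This is the intended reading (and is how the hypersurface version in \cite{G1} is meant), but it is worth making explicit.
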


In \cite{G1} Gaffney gave a description of these generic hyperplanes using analytic invariants in the jacobian ideal case. Now we generalize this description for the jacobian module case. For the rest of this section we assume that the hyperplanes $H_y$ are not limiting tangent hyperplanes of $(X,0)$. As we have seen, this implies that $\tilde{X}_y=X\cap H_y$ has isolated singularity at the origin and $\overline{JM(X)_{H_y}}=\overline{JM(X)}$ at the origin.

The invariants that we use here appeared earlier in the previous section. Since $\tilde{X}_y$ has isolated singularity at the origin then by Corollary \ref{C2.18} the multiplicity of the pair $e((JM(\tilde{X}_y))_D,(\overline{JM(\tilde{X}_y)})_D)$ is well defined. 

The proof that the minimal value of $e((JM(\tilde{X}_y))_D,(\overline{JM(\tilde{X}_y)})_D)$ identifies generic hyperplanes will be done using the Multiplicity Polar Theorem (see Corollary 1.4 \cite{G7}). Now we identify the modules we will use.

We work on the fibered product $\tilde{X}\underset{\bP^{n-1}}{\times}\tilde{X}\subseteq X\times\bP^{n-1}\times X$. Let $N:=(\beta^*(\overline{JM(X)}))_D$ and $M:=(JM_z(\tilde{X}))_D$, considering $\tilde{X}$ defined by the analytic map $F: E_{n-1}\cap(\bC^n\times U_n)\rightarrow\bC^p$, given by $F(z,y)=f\circ\beta(z,y)$. Clearly $M$ restricted to the fiber of the family $\tilde{X}$ over the hyperplane $H_y$ is just $(JM(X\cap H_y))_D$ and $N$ restricted to $H_y$ is $(\overline{JM(X)}\mid_{H_y})_D$. Further, since we are assuming that $H_y$ is not a limiting tangent hyperplane of $(X,0)$ then $\overline{JM(X)}\mid_{H_y}=\overline{JM(X)_{H_y}}$, hence $N$ restricted to $H_y$ is $(\overline{JM(\tilde{X}_y)})_D$. Therefore, the multiplicity of the pair $(M\mid_{H_y},N\mid_{H_y})$ is the same as $e((JM(\tilde{X}_y))_D,(\overline{JM(\tilde{X}_y)})_D)$.

The next result is a generalization of Theorem 4.6 of \cite{G1}.

\begin{theorem}
	Under the above notations, let $U$ be the set of hyperplanes which are not limiting tangent hyperplanes of $(X,0)$. Shrinking $U$ if necessary, one has: 
	
	\begin{enumerate}
		\item [a)] The map $$\begin{matrix}
		U & \rightarrow & \bZ \\
		H_y & \mapsto  & e((JM(\tilde{X}_y))_D,(\overline{JM(\tilde{X}_y)})_D)
		\end{matrix}$$
		
		\noindent is upper semicontinuous on $U$;
		
		\item[b)] The $iL_A$ condition holds along $U$ at a hyperplane $H_y$ for which the value of \\$e((JM(\tilde{X}_y))_D,(\overline{JM(\tilde{X}_y)})_D)$ is minimal.
	\end{enumerate}
\end{theorem}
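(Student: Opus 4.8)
The plan is to derive both statements from the Multiplicity Polar Theorem (Corollary~1.4 of \cite{G7}) together with Proposition \ref{P10}, following the pattern of Theorem~4.6 of \cite{G1} but in the setting of doubles of modules. Write $\cT:=\tilde{X}\underset{\bP^{n-1}}{\times}\tilde{X}$, regarded as a family over $U$ via the projection to $\bP^{n-1}$, and put
$e_y:=e\big((JM(\tilde{X}_y))_D,(\overline{JM(\tilde{X}_y)})_D\big)=e\big(M\mid_{H_y},N\mid_{H_y}\big)$, the multiplicity of the fibre pair at the origin of $\tilde{X}_y\times\tilde{X}_y$; this is finite for every $y\in U$ by Corollary \ref{C2.18}, after shrinking $U$ so that, as in the previous theorem, each $\tilde{X}_y$ has an isolated singularity at $(0,y)$. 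Shrinking $U$ again I would arrange that $M\subseteq(JM(\tilde{X}))_D\subseteq N$ on $\cT$ with $(\overline{JM_z(\tilde{X})})_D=N$ (this uses only the non--limiting--tangent hypothesis $\overline{JM(X)_{H_y}}=\overline{JM(X)}$ together with the explicit generators $\partial F/\partial z_i=\beta^*(\partial f/\partial z_i)+y_i\beta^*(\partial f/\partial z_n)$ and $\partial F/\partial y_i=z_i\beta^*(\partial f/\partial z_n)$), and that the relative polar varieties used below all have the expected dimension over $U$. Set also $d_y:=e\big(M\mid_{H_y},(JM(\tilde{X})\mid_{H_y})_D\big)$ and $g_y:=e\big((JM(\tilde{X})\mid_{H_y})_D,N\mid_{H_y}\big)$; by the additivity principle \cite{KT} (equivalently by Proposition \ref{P10}, whose finite--colength hypotheses hold by Proposition \ref{P4} and Corollary \ref{C2.10}) one has $e_y=d_y+g_y$, and $d_y=0$ if and only if $\overline{M\mid_{H_y}}=\overline{(JM(\tilde{X})\mid_{H_y})_D}$ at the origin, which, since $(JM(\tilde{X})\mid_{H_y})_D=M\mid_{H_y}+(JM(\tilde{X})_Y)_D\mid_{H_y}$, is exactly the fibrewise form of the $iL_A$ condition at $H_y$.

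\emph{Part (a).} Fix $y_0\in U$ and slice $U$ by a generic line through $y_0$, reducing the comparison of $e_{y_0}$ with nearby $e_y$ to a one--parameter family, to which I apply the Multiplicity Polar Theorem to the pair $(M,N)$ on $\cT$ restricted to the slice, at the point $(0,y_0,0,y_0)$: it gives $e_{y_0}-e_y=\mu\ge 0$ for $y$ generic on the slice, $\mu$ being the multiplicity of the one--dimensional relative polar variety of the pair there. Hence $e_{y_0}\ge e_y$ for generic $y$ near $y_0$. Since $M$ and $N$ are coherent on $\cT$, the function $y\mapsto e_y$ is Zariski--constructible, hence finitely valued, on $U$; a finitely valued function on the irreducible space $U$ whose generic value near each point does not exceed its value at the point is upper semicontinuous, and attains its minimum $m$ on a dense Zariski--open subset.

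\emph{Part (b).} Let $H_{y_0}$ satisfy $e_{y_0}=m$. By the Genericity Theorem \ref{T2.2.8} there is a dense Zariski--open $U_1\subseteq U$ on which $iL_A$ holds; by the last sentence of the Setup paragraph, $d_y=0$ on $U_1$, hence $g_y=e_y$ there, so the generic value of $g_y$ equals the generic value of $e_y$, namely $m$. Applying the Multiplicity Polar Theorem to the pair $\big((JM(\tilde{X}))_D,N\big)$ over the slice through $y_0$ gives $g_{y_0}\ge m$, and therefore
$$m=e_{y_0}=d_{y_0}+g_{y_0}\ge 0+m=m,$$
forcing $d_{y_0}=0$; thus the fibrewise $iL_A$ condition holds at $H_{y_0}$. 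Finally, applying the Multiplicity Polar Theorem to the pair $\big(M,(JM(\tilde{X}))_D\big)$ over the same slice gives $d_{y_0}-(\text{generic }d_y)=\nu$, the multiplicity of the relative polar curve of that pair at $(0,y_0,0,y_0)$; since $d_{y_0}=0$ and $d_y=0$ generically, $\nu=0$, so for a generic slice this polar curve, hence the relative polar variety over $Y$, misses $(0,y_0,0,y_0)$. The vanishing of both the "vertical" contribution ($d_{y_0}=0$) and the "horizontal" contribution ($\nu=0$) yields $\overline{(JM(\tilde{X}))_D}=\overline{(JM_z(\tilde{X}))_D}$ at $(0,y_0,0,y_0)$ on $\cT$, i.e.\ $(JM(\tilde{X})_Y)_D\subseteq\overline{(JM_z(\tilde{X}))_D}$ there, which is the $iL_A$ condition at $H_{y_0}$.

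\emph{Main obstacle.} The technical heart is the promotion, in the last step of Part (b), of the fibrewise integral--closure equality to the equality on the total space $\cT$ at $(0,y_0,0,y_0)$: identifying $\nu=0$ with the absence of a "horizontal" component of the exceptional divisor of the pair $\big((JM_z(\tilde{X}))_D,(JM(\tilde{X}))_D\big)$ relative to $Y$, and matching this with the double integral--closure condition $(JM(\tilde{X})_Y)_D\subseteq\overline{(JM_z(\tilde{X}))_D}$, requires the full strength of the Multiplicity Polar Theorem and of Teissier's principle of specialization of integral dependence, applied along the generic slice and using that $iL_A$ already holds over $U_1$. A secondary, more routine difficulty is pinning down a single Zariski--open $U$ over which the isolated--singularity, inclusion $M\subseteq(JM(\tilde{X}))_D\subseteq N$, equidimensionality, and expected--dimension--of--polar--varieties hypotheses all hold simultaneously and survive generic one--parameter slicing. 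This is precisely the module/double analogue of the argument carried out for Jacobian ideals in \cite{G1}.
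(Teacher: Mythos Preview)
Your treatment of part (a) is essentially the paper's: apply the Multiplicity Polar Theorem to the pair $(M,N)$ and use that the polar of $N$ can be controlled after shrinking $U$. The slicing and constructibility language you add is fine but not needed.

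For part (b) the paper's route is considerably more direct than yours, and your detour has a genuine gap. The paper argues as follows: minimality of $e_y$ forces constancy, and the Multiplicity Polar Theorem for the pair $(M,N)$ then shows that the polar variety of $M$ of codimension equal to $\dim U$ is empty at the point in question (using that the corresponding polar of $N$ is already empty after shrinking $U$). This bounds the fibre dimension of $\mathrm{Proj}(\cR(M))$, and Theorem~A1 of \cite{KT} (the principle of specialization of integral dependence) then carries the generically known inclusion $\partial F/\partial y_i\in\overline{M}$, supplied by Theorem~\ref{T2.2.8}, to the special hyperplane. No fibrewise step, no intermediate module, no promotion is involved.

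Your argument inserts the intermediate module $(JM(\tilde X))_D$ and splits $e_y=d_y+g_y$, which is unnecessary and creates the ``promotion'' difficulty you yourself flag. The gap is in your final step: the Multiplicity Polar Theorem for the pair $\bigl(M,(JM(\tilde X))_D\bigr)$ does \emph{not} give a single polar multiplicity $\nu$; it gives the difference $\mathrm{mult}\,\Gamma(M)-\mathrm{mult}\,\Gamma\bigl((JM(\tilde X))_D\bigr)$. So $\nu=0$ does not by itself imply that $\Gamma(M)$ misses the point unless you have separately controlled the polar of the intermediate module $(JM(\tilde X))_D$---and nothing in your setup does this (unlike $N$, which arises from an integrally closed module and whose polar can be emptied by shrinking $U$, the intermediate module has no such a priori control). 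Consequently your passage from ``$d_{y_0}=0$ and $\nu=0$'' to the total-space equality $\overline{(JM(\tilde X))_D}=\overline{(JM_z(\tilde X))_D}$ is not justified as written. The fix is simply to drop the intermediate module and argue as the paper does: work with $(M,N)$ only, use constancy of $e_y$ plus emptiness of $\Gamma(N)$ to get emptiness of $\Gamma(M)$, and then invoke Theorem~A1 of \cite{KT} directly.
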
 

\begin{proof}
	(a) By the definition of $U$, $(\overline{JM(\tilde{X}_y)})_D$ is the restriction of $N$ to the fiber $\tilde{X}_y=X\cap H_y$. Since $N$ has no polar variety with the same codimension of $U$ then the multiplicity polar theorem implies that $e((JM(\tilde{X}_y))_D,(\overline{JM(\tilde{X}_y)})_D)$ is upper semicontinuous (this is avoidable by shrinking the $Z$-open subset $U$).
	
	(b) Suppose $H_y\in U$ gives the minimal value of the multiplicity. Since this value already is minimal then it cannot go down, hence it must be constant. This implies that the polar variety of $M$ of the same codimension as $U$ is empty, which puts restrictions on the size of the fiber of $\textrm{Proj}(\cR(M))$. We already know that $\{\frac{\partial F}{\partial y_i}\}_{i=1}^{n-1}$ are in $\overline{M}$ generically. Since the dimension of the fiber of $\textrm{Proj}(\cR(M))$ is bounded, then by Theorem A1 of \cite{KT} we have that $\{\frac{\partial F}{\partial y_i}\}_{i=1}^{n-1}$ are in $\overline{M}$ at $H_y$.
\end{proof}

\begin{remark}
It may be necessary to shrink $U$ to avoid points in the parameter space where the polar variety of $N$ of dimension $n-1$ contains the point.
\end{remark}

\section{Bi-Lipschitz equisingularity in an ICIS family of irreducible curves}\label{sec33}

In this section we prove that a strengthened version of the $iL_A$ condition implies bi-Lipschitz equisingularity if $\cX$ is a family of irreducible ICIS curves.

First, the next result gives us conditions involving the double and the integral closure of modules so that we can construct Lipschitz vector fields. In what follows, let $\tilde\cX$ denote the normalization of $\cX$.

\begin{proposition}\label{P2.24}
	Suppose $\cX\subseteq\bC^n\times \bC^k$, with $\dim \cX=k+1$. Let $M$ be an $\cO_{\cX}$-submodule of $\cO_{\cX}^k$ of  rank $k$ off $Y:=0\times \bC^k$, with $r$ generators. Let $M_k$ be a reduction of $M$ generated by the first $k$ columns of $[M]$. Let $h\in\cO_{\cX}^k$. Let $c(h)$ be the meromorphic vector field defined by the solution of the equation $$[M_k]\cdot c(h)=h$$ off $Y$ (using Cramer's Rule). If $h\in\overline{m_YM}$ at $x$ and $h_D\in\overline{(M_k)_{D,Y}}$ at $(x,x)$ then the vector field $c(h)$ is Lipschitz rel $Y$, i.e, $c(h)-c(h)'\in\overline{I_{\Delta}\cO_{\tilde\cX\times_Y \tilde\cX}^k}$ at $(x,x)$.
\end{proposition}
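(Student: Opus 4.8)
The plan is to verify the conclusion with the curve criterion for the integral closure of the module $I_{\Delta}\cO_{\tilde\cX\times_Y\tilde\cX}^{k}$ at $(x,x)$, where I read $c(h)$ and $c(h)'$ as the pullbacks of the vector field $c(h)$ by $\pi_1$ and by $\pi_2$, and $I_{\Delta}$ as generated by the $z_i\circ\pi_1-z_i\circ\pi_2$. Write $g_1,\dots,g_k$ for the first $k$ columns of $[M]$ (so $[M_k]$ is the $k\times k$ matrix with these columns) and let $\nu\colon\tilde\cX\to\cX$ be the normalization. Two preliminary facts should be recorded. First, since $M_k$ is a reduction of $M$ and $M$ has rank $k$ off $Y$, one has $\overline{m_YM_k}=\overline{m_YM}$, so $h\in\overline{m_YM_k}$, and $\det[M_k]$ vanishes set‑theoretically only on $\Sigma(M)\subseteq Y$ (using the relation between $J_k$ and integral closure, cf.\ \cite{G3,KT}); in particular $c(h)=[M_k]^{-1}h$ has poles only along $Y$. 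Second, $c(h)$ extends to a holomorphic section of $\cO_{\tilde\cX}^{k}$ vanishing on $\nu^{-1}(Y)$: for any analytic arc $\phi\colon(\bC,0)\to(\cX,x)$ with image not contained in $Y$, $\phi^{*}[M_k]$ is invertible over $\mathrm{Frac}(\cO_{\bC,0})$ and $h\in\overline{m_YM_k}$ gives $\phi^{*}(h)\in\phi^{*}(m_Y)\phi^{*}(M_k)$, whence $\phi^{*}(c(h))=(\phi^{*}[M_k])^{-1}\phi^{*}(h)\in\phi^{*}(m_Y)\cO_{\bC,0}^{k}$; by the curve criterion and normality of $\tilde\cX$ (Riemann extension), $c(h)\circ\nu\in\overline{m_Y\cO_{\tilde\cX}}^{k}$. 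This in particular makes $c(h)-c(h)'$ an honest section of $\cO_{\tilde\cX\times_Y\tilde\cX}^{k}$.

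Next I would take an arbitrary analytic arc $\tilde\phi=(\tilde\phi_1,\tilde\phi_2)\colon(\bC,0)\to\tilde\cX\times_Y\tilde\cX$ whose image under $(\nu,\nu)$ passes through $(x,x)$, set $\phi_\ell:=\nu\circ\tilde\phi_\ell$, let $t$ generate the maximal ideal of $\cO_{\bC,0}$, and let $t^{a}$ generate $\tilde\phi^{*}(I_{\Delta}\cO_{\tilde\cX\times_Y\tilde\cX})$, so $a=\min_i\mathrm{ord}_t(z_i\circ\phi_1-z_i\circ\phi_2)$. The goal is $\tilde\phi^{*}(c(h)\circ\pi_1-c(h)\circ\pi_2)\in(t^{a})\cO_{\bC,0}^{k}$. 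In the main case, neither $\phi_1$ nor $\phi_2$ maps into $Y$, so $\phi_1^{*}[M_k]$ and $\phi_2^{*}[M_k]$ are invertible over $\mathrm{Frac}(\cO_{\bC,0})$. Then $\gamma:=(\phi_1,\phi_2)$ is an analytic arc on $\cX\times_Y\cX$ through $(x,x)$, so the curve criterion applied to $h_D\in\overline{(M_k)_{D,Y}}$, together with the generators of $(M_k)_{D,Y}$ from Proposition \ref{P3}, yields $a_j,b_{ij}\in\cO_{\bC,0}$ with
\[
\bigl(h\circ\phi_1,\ h\circ\phi_2\bigr)=\sum_{j=1}^{k}a_j\bigl(g_j\circ\phi_1,\ g_j\circ\phi_2\bigr)+\sum_{i=1}^{n}\sum_{j=1}^{k}b_{ij}\bigl(0,\ (z_i\circ\phi_1-z_i\circ\phi_2)(g_j\circ\phi_2)\bigr).
\]
Comparing the first $k$ coordinates gives $\phi_1^{*}(h)=\phi_1^{*}[M_k]\,\mathbf a$ with $\mathbf a:=(a_1,\dots,a_k)^{T}$, hence $\mathbf a=\phi_1^{*}(c(h))$; comparing the last $k$ coordinates gives $\phi_2^{*}(h)=\phi_2^{*}[M_k](\mathbf a+\mathbf c)$ with $\mathbf c:=\bigl(\sum_i b_{ij}(z_i\circ\phi_1-z_i\circ\phi_2)\bigr)_{j=1}^{k}$, hence $\phi_2^{*}(c(h))=\mathbf a+\mathbf c$. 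Subtracting, $\tilde\phi^{*}(c(h)\circ\pi_1-c(h)\circ\pi_2)=\phi_1^{*}(c(h))-\phi_2^{*}(c(h))=-\mathbf c$, and since every $z_i\circ\phi_1-z_i\circ\phi_2$ lies in $(t^{a})$ we get $\mathbf c\in(t^{a})\cO_{\bC,0}^{k}$, as needed.

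Finally I would dispose of the degenerate arcs, where the hypothesis $h\in\overline{m_YM}$ is essential. If both $\phi_1$ and $\phi_2$ map into $Y$, then $z_i\circ\phi_1-z_i\circ\phi_2\equiv 0$ for all $i$ and $c(h)$ vanishes on $\nu^{-1}(Y)$, so both sides are $0$. If exactly one of them, say $\phi_1$, maps into $Y$, then $z_i\circ\phi_1=0$ forces $t^{a}$ to generate $\phi_2^{*}(m_Y)$, while $\phi_2^{*}(c(h))\in\phi_2^{*}(m_Y)\cO_{\bC,0}^{k}=(t^{a})\cO_{\bC,0}^{k}$ by $h\in\overline{m_YM_k}$ and $\phi_1^{*}(c(h))=0$; hence again the difference lies in $(t^{a})\cO_{\bC,0}^{k}$. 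Having checked all arcs, the curve criterion gives $c(h)-c(h)'\in\overline{I_{\Delta}\cO_{\tilde\cX\times_Y\tilde\cX}^{k}}$ at $(x,x)$, i.e.\ $c(h)$ is Lipschitz rel $Y$. I expect the only delicate points to be the meromorphic‑to‑holomorphic extension of $c(h)$ across $\nu^{-1}(Y)$ and the bookkeeping for arcs collapsing into $Y$; the heart of the argument is the identity $\phi_1^{*}(c(h))-\phi_2^{*}(c(h))=-\mathbf c$, read off from the two coordinate blocks of the curve‑criterion relation for $h_D\in\overline{(M_k)_{D,Y}}$.
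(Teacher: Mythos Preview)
Your proof is correct and follows essentially the same approach as the paper's: both use the curve criterion, split into the main case (neither branch in $Y$) and the degenerate cases (one or both branches in $Y$), and in the main case read off the identity $\phi_1^*(c(h))-\phi_2^*(c(h))\in I_\Delta$-terms from the two blocks of the relation $h_D\in\overline{(M_k)_{D,Y}}$ pulled back along the arc. Your write-up is a bit more explicit about the extension of $c(h)$ to $\tilde\cX$ and about taking arcs on $\tilde\cX\times_Y\tilde\cX$ rather than on $\cX\times_Y\cX$, but this is only a difference in presentation, not in substance.
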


\begin{proof}
	Let us use the curve criterion. Let $\phi:\bC,0\rightarrow \cX\times_Y \cX,(y,x,x)$ be a curve, with coordinates $ \phi_1(t),\phi_2(t)$, $\pi_Y\circ\phi_1(t)=\pi_Y\circ  \phi_2(t)$.
	
	First suppose $\phi_1(t)\equiv (0, \pi_Y\circ\phi_1(t)) $. In this case we have $$\left[
	\begin{matrix}
	h\circ\phi_1 \\
	h\circ \phi_2
	\end{matrix}
	\right]
	=\left[\begin{matrix}
	0 \\
	h\circ \phi_2
	\end{matrix}\right]
	=\left[\begin{matrix}
	0 \\
	[M_k]'\cdot c(h)
	\end{matrix}\right]
	%\mod \phi_2^*(mM_k')\cO_{\bC,0}^p
	$$
	
	\noindent off $0\times Y$.  Then, because $h\circ\phi_2\in(\phi_2)^*(m_YM_k')$ implies that $h\circ\phi_2=M'_k(v)$, $v\in\phi^*_2 m_Y\cO_{\bC,0}^k$,  we have $M'_k(c(h)'-v)=0$ and $c(h)'=v\in (z_i')\cO_{\cX}^k$ along $\phi_2$. (In particular, this shows $c(h)$ is well defined along curves in $\cX$, hence a smooth function on $\tilde\cX$.) Since $c(h)=0$ along $\phi_1$ then $\phi^*(c(h)-c(h)')\in\phi^*(I_{\Delta}\cO_{\tilde\cX\times \tilde\cX}^k)$.
	
	The case where $\phi_2\equiv (0, \pi_Y\circ\phi_2(t)) $ is analogous.
	
	Now assume that $\pi_{\cX}\circ\phi_1,\pi_{\cX}\circ\phi_2\neq 0$. Since $(h_D)\in\overline{(M_k)_{D,Y}}$ then $$(0,[M_k]\circ \phi_2.(c(h)\circ \phi_2-c(h)\circ \phi_1)\in\Phi^*{(M_k)_D} .$$
	
	So, $\begin{bmatrix}
	0\\
	M_k'(c(h)'-c(h))
	\end{bmatrix}\equiv\begin{bmatrix}
	M_k\circ\phi_1\\
	M_k\circ\phi_2
	\end{bmatrix}(v)\mod (0,\phi_2^*(M'_k I_{\Delta}\cO_{X\times X}^k))$.
	
	\noindent Since $M_k\circ\phi_1$ has rank $k$ generically, then $v$ has to be zero. Hence $M_k'(c(h)'-c(h))\in M_k'I_{\Delta}\cO_{X\times X}^k$ at $(x,x)$ along $\phi$.
	
	Therefore $c(h)'-c(h)\in \phi^*(I_{\Delta}\cO_{X\times X}^k)$ along $\phi$ at $(x,x)$. Then these three cases show that $c(h)'-c(h)\in\overline{I_{\Delta}\cO_{\tilde\cX\times_Y \tilde\cX}^k}$ at $(x,x)$.
\end{proof}
% new material

Let $\cX\subseteq\bC\times\bC^n$ be a  family of irreducible, ICIS curves.  Suppose that the family $\cX$ is Whitney equisingular, with the the parameter axis as the singular set of the family.  This implies that the multiplicity, Milnor number and $\delta$ invariant of each curve in the family is independent of parameter (\cite {G8}).  In turn, this implies that we have a normalization $F:\bC\times\bC\rightarrow \cX\subseteq\bC\times\bC^n$ which is a homeomorphism (cf. cor 1 p 605 \cite{T76}).  After a coordinate change in source and target we can put our family into a nice form. 

\begin{proposition} \label{normal form} Suppose the family $\cX$ is Whitney equisingular, with the parameter axis as the singular set of the family. Suppose the normalization of the family is $F:\bC\times\bC\rightarrow \cX\subseteq\bC\times\bC^n$. Then after holomorphic coordinate changes in $\bC\times \bC$ and $\bC\times\bC^n$, $F$ has normal form

$$F(t,s)=(t,F_1(t,s),...,F_{n-1}(t,s),s^p)$$ where for each parameter $t$ the order of $$s\mapsto F_i(t,s)$$ is greater than $p$, $\forall i\in\{1,...,n-1\}$.\end{proposition}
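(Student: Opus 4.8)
The plan is to reach the normal form by three coordinate changes: one putting $F$ into ``family form'', a (partly $t$-dependent) linear change in the target that isolates the component carrying the multiplicity, and finally a rescaling of the source parameter that extracts a $p$-th root. Throughout, $p$ denotes the common multiplicity $\operatorname{mult}_0\cX_t$ of the fibres, which is constant by Whitney equisingularity.

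First I would read off what the hypotheses give. Since $F$ is the normalization of the family $\cX\to\bC$ (equinormalizability holds because $\delta$ is constant, as recalled before the statement) and each fibre $\cX_t$ is an irreducible curve germ through the parameter axis $\bC\times 0$, we have $\widetilde\cX=\bC\times\bC$ and, after a coordinate change in the source, $F(t,s)=(t,g_1(t,s),\dots,g_n(t,s))$ with $g_i(t,0)=0$; for each fixed $t$ the germ $s\mapsto g(t,s)$ is a normalization of $\cX_t$, so $\operatorname{mult}_0\cX_t=\min_i\operatorname{ord}_s g_i(t,\cdot)=p$. Hence $\operatorname{ord}_s g_i(t,\cdot)\ge p$ for all $i$ and all small $t$, and the vector $v(t)\in\bC^n$ whose entries are the coefficients of $s^p$ in the $g_i(t,\cdot)$ is holomorphic in $t$ and nowhere zero near $t=0$.

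Next comes the target change. Choose a constant linear functional $\lambda$ with $\lambda(v(0))\ne 0$; after a constant linear change in $\bC^n$ we may take $\lambda$ to be the last coordinate, so $g_n(t,s)=s^p u(t,s)$ with $u(t,0)=\lambda(v(t))\ne 0$, and after rescaling the last coordinate we may assume $u(t,0)=1$. A further shear $z_i\mapsto z_i-c_i(t)z_n$ for $i<n$, where $c_i(t)$ is the (holomorphic) ratio of the $s^p$-coefficient of $g_i(t,\cdot)$ to that of $g_n(t,\cdot)$, leaves $g_n$ unchanged while forcing $\operatorname{ord}_s g_i(t,\cdot)>p$ for $i<n$; this is a biholomorphism of $(\bC\times\bC^n,0)$ fixing the parameter. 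Finally I would rescale the source: the unit $u$ with $u(t,0)=1$ has a holomorphic $p$-th root near the origin, so $\sigma:=s\,u(t,s)^{1/p}$ defines a biholomorphism $(t,s)\mapsto(t,\sigma)$ of $(\bC\times\bC,0)$ fixing $t$ (its $\partial/\partial s$ at $s=0$ is $u(t,0)^{1/p}=1$), and in the coordinate $\sigma$ we get $g_n=\sigma^p$. Since $\sigma$ differs from $s$ by a unit factor, the remaining components keep $\sigma$-order $>p$; relabelling $\sigma$ as $s$ and $g_i$ as $F_i$ gives the asserted form.

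The only genuinely delicate point I anticipate is arranging the target change to depend holomorphically and invertibly on $t$ on a full neighbourhood of $t=0$: that $u(t,0)=\lambda(v(t))$ is holomorphic and nonvanishing and that the shear coefficients $c_i(t)$ are holomorphic. This is precisely where constancy of the multiplicity enters, through the nonvanishing of $v(t)$; a constant linear change alone does not suffice, because the tangent line of $\cX_t$ may genuinely rotate with $t$. One should also confirm that ``normalization of the family'' legitimately produces the family form $F(t,s)=(t,g(t,s))$, which follows because $F$ is a morphism over the parameter space and each $\cX_t$ is irreducible, so $\widetilde\cX=\bC\times\bC$ with the first projection as parameter map.
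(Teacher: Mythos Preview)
Your argument is correct and follows essentially the same route as the paper: extract the nowhere-vanishing vector $v(t)$ of $s^p$-coefficients, apply a $t$-dependent linear change in the target to push the leading terms into the last slot, then take a $p$-th root in the source to make that slot exactly $s^p$. The paper packages your rescaling-plus-shear into a single linear map $L(t,z)=(t,z_1+v_1(t)z_n,\dots,z_{n-1}+v_{n-1}(t)z_n,v_n(t)z_n)$, but the content is the same.
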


\begin{proof} Since $F$ is a homeomorphism with singular locus the $t$ axis and constant multiplicity it follows that we can write $F$ as 
$$F(s,t)=(t,s^p(\bv(t)))mod (s^{p+1}), \bv(0)\ne 0.$$ After a scale change and possible permutation of the coordinates in the target we can assume $\bv(0)=(v_1,\dots, 1)$.  Let $L(t,z_1\dots,z_n)=(t, z_1+v_1(t)z_n,\dots, z_{n-1}+z_nv_{n-1}(t), v_n(t)z_n)$. Then $L$ is biholomorphic and 
$$L\circ(t,0,\dots,s^p)=(t, s^p(\bv(t)).$$

Then $L^{-1}\circ F(s,t)=(t,0,\dots,0, s^p) mod (s^{p+1})$.

Let $G$ denote $L^{-1}\circ F(s,t)$. The last coordinate of $G$ has the form $s^p(c(s,t))$, $c(0,0)=1$. Let $R(s,t)=(t, sc^{1/p}(s,t))$. Then $R$ is biholomorphic and $(t,s^p)\circ R=(t, s^p(c(s,t)))$. This implies that $L^{-1}\circ F(s,t)\circ R^{-1}$ has the desired form. \end {proof}

\begin{remark} If $F$ is a parameterization for a family of irreducible ICIS curves with the form of \ref{normal form}, then it is clear that the curves in the family have the same multiplicity and $\delta$ invariant, hence the same Milnor number, hence form a Whitney equisingular family. \end{remark}
%new material ends
Let $\cX\subseteq\bC\times\bC^n$ be an ICIS family of irreducible curves. Assume that we have a normalization $F:\bC\times\bC\rightarrow \cX\subseteq\bC\times\bC^n$ which is a homeomorphism, and suppose that the family $\cX$ is Whitney equisingular. Let $p$ be the multiplicity of $\cX$ and assume we can write $$F(t,s)=(t,F_1(t,s),...,F_{n-1}(t,s),s^p)$$ where for each parameter $t$ the order of $$s\mapsto F_i(t,s)$$ is greater than $p$, $\forall i\in\{1,...,n-1\}$.

Notice that if $\frac{\partial F_1}{\partial t},...,\frac{\partial F_n}{\partial t}$ are Lipschitz functions on $\cX$ rel $Y$, then one can build a canonical vector field defined on $\cX$, which is Lipschitz on fibers. Thus, the flow of this vector field makes $\cX$ a bi-Lipschitz equisingular family of curves.

Let $G:\bC\times\bC^n\rightarrow\bC^q$ be an analytic map that defines $\cX$, i.e, $\cX=G^{-1}(0)$. Consider the jacobian module $JM(\cX)=JM(G)$. Here we consider the double relative to the parameter space $Y=\bC\times 0\equiv \bC$.

Thanks to the assumed normal form, as we shall see, $JM_z(G)$ has a minimal reduction generated by the first $n-1$ partial derivatives which we denote by $DG_{n-1}$.  We strengthen the $iL_A$ condition, by asking that $\left(\frac{\partial G}{\partial t}\right)_D\in\overline{(DG_{n-1})_D}$.

The following theorem gives us an infinitesimal condition for bi-Lipschitz equisingularity.

\begin{proposition}
	With the above notations, the functions $\frac{\partial F_1}{\partial t},...,\frac{\partial F_n}{\partial t}$ are Lipschitz rel $Y$ if and only if, $$\left(\frac{\partial G}{\partial t}\right)_D\in\overline{(DG_{n-1})_{D,Y}}.$$ . 
	
	In particular, if $(JM(\cX)_Y)_{D,Y}\subseteq\overline{(DG_{n-1})_{D,Y}}$ then $\cX$ is bi-Lipschitz equisingular.
\end{proposition}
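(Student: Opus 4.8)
The plan is to read this statement as the concrete incarnation of Proposition \ref{P2.24} in the ICIS-curve situation, with $h=\partial G/\partial t$, with $J_zM(\cX)$ playing the role of $M$, and with $DG_{n-1}$ playing the role of the reduction $M_k$. First I would set up the dictionary. Since $G\circ F\equiv 0$ on the normalization $\tilde\cX=\bC\times\bC$, differentiating with respect to $t$ and using the normal form $F=(t,F_1,\dots,F_{n-1},s^p)$ — so that the $t$-component contributes the pure $\partial G/\partial t$ term and $\partial(s^p)/\partial t=0$ kills the $\partial G/\partial z_n$ term — gives, exactly as in the derivation of $(\star)$ in the proof of Theorem \ref{T2.2.8},
\[
\frac{\partial G}{\partial t}\circ F \;=\; -\sum_{i=1}^{n-1}\Big(\frac{\partial G}{\partial z_i}\circ F\Big)\frac{\partial F_i}{\partial t}.
\]
Because $DG_{n-1}$ is a reduction of $J_zM(\cX)$ it has generic rank $n-1$, so $[DG_{n-1}]$ is a generically invertible square matrix; hence the meromorphic vector field produced by Cramer's rule in Proposition \ref{P2.24} from $h=\partial G/\partial t$ is precisely $c(h)=-(\partial F_1/\partial t,\dots,\partial F_{n-1}/\partial t)$, an identity of functions holomorphic on $\tilde\cX$. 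Since $\partial F_n/\partial t=0$ by the normal form, the functions $\partial F_1/\partial t,\dots,\partial F_n/\partial t$ are all Lipschitz rel $Y$ if and only if $\partial F_1/\partial t,\dots,\partial F_{n-1}/\partial t$ are, i.e. if and only if $c(h)$ is.

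For the implication ``$(\partial G/\partial t)_D\in\overline{(DG_{n-1})_{D,Y}}\ \Rightarrow\ $ the $\partial F_i/\partial t$ are Lipschitz'' I would apply Proposition \ref{P2.24} directly: its hypothesis $h_D\in\overline{(M_k)_{D,Y}}$ is exactly the assumed containment, and its hypothesis $h\in\overline{m_Y\,DG_{n-1}}$ follows from the standing assumption that $\cX$ is Whitney equisingular along $Y$, since condition $W$ gives $JM(\cX)_Y\subseteq\overline{m_Y\,J_zM(\cX)}=\overline{m_Y\,DG_{n-1}}$ and $\partial G/\partial t$ generates $JM(\cX)_Y$ (the parameter space being the single $t$-axis). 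Proposition \ref{P2.24} then yields that $c(h)$, hence each $\partial F_i/\partial t$, is Lipschitz rel $Y$.

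For the converse I would argue with the curve criterion on $\cX\underset{Y}{\times}\cX$, lifting each test arc to $\tilde\cX\underset{Y}{\times}\tilde\cX$ and computing there. Pulling the identity above back by $\pi_1$ and $\pi_2$ gives, on $\tilde\cX\underset{Y}{\times}\tilde\cX$,
\[
\Big(\tfrac{\partial G}{\partial t}\Big)_D=-\sum_{i=1}^{n-1}\Big(\tfrac{\partial F_i}{\partial t}\circ\pi_2\Big)\Big(\tfrac{\partial G}{\partial z_i}\Big)_D-\sum_{i=1}^{n-1}\Big(\big(\tfrac{\partial F_i}{\partial t}\circ\pi_1-\tfrac{\partial F_i}{\partial t}\circ\pi_2\big)\big(\tfrac{\partial G}{\partial z_i}\circ\pi_1\big),\,0\Big).
\]
Along a lifted arc the multipliers $\partial F_i/\partial t\circ\pi_2$ are holomorphic, so the first sum lies in $(DG_{n-1})_{D,Y}$ evaluated on the arc; and since $\partial F_i/\partial t$ is Lipschitz rel $Y$ the differences $\partial F_i/\partial t\circ\pi_1-\partial F_i/\partial t\circ\pi_2$ lie in $\overline{I_\Delta\cO_{\tilde\cX\times_Y\tilde\cX}}$, so each summand of the second sum is a multiple of a type-$\cB'$ generator $((z_j\circ\pi_1-z_j\circ\pi_2)(\partial G/\partial z_i\circ\pi_1),0)$ of $(DG_{n-1})_{D,Y}$ from Proposition \ref{P3}, hence lies in $\overline{(DG_{n-1})_{D,Y}}$ along the arc. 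Thus $(\partial G/\partial t)_D$ satisfies the curve criterion, which gives $(\partial G/\partial t)_D\in\overline{(DG_{n-1})_{D,Y}}$. Finally, the ``in particular'' is immediate: if $(JM(\cX)_Y)_{D,Y}\subseteq\overline{(DG_{n-1})_{D,Y}}$ then $(\partial G/\partial t)_D\in(JM(\cX)_Y)_{D,Y}\subseteq\overline{(DG_{n-1})_{D,Y}}$, so by the equivalence all the $\partial F_i/\partial t$ are Lipschitz rel $Y$, and then the canonical vector field $\partial/\partial t+\sum_i(\partial F_i/\partial t)\,\partial/\partial z_i$ they define is tangent to $\cX$ and Lipschitz on fibers, so its flow trivializes $\cX$ bi-Lipschitz, as recalled just before the statement.

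The step I expect to be the main obstacle is the bookkeeping connecting objects on $\cX\underset{Y}{\times}\cX$ with the computation on the normalization $\tilde\cX\underset{Y}{\times}\tilde\cX$: one must check that the multipliers $\partial F_i/\partial t$, which are only continuous (Lipschitz) and not holomorphic on $\cX$ itself, become holomorphic after pulling back along arcs that factor through the normalization, and that verifying the curve criterion downstairs may legitimately be done upstairs. A secondary point to pin down carefully is the reduction claim that $[DG_{n-1}]$ has generic rank $n-1$, which is what makes Cramer's rule and the identification $c(\partial G/\partial t)=-(\partial F_i/\partial t)$ legitimate, and which is exactly where the normal form (the last coordinate being $s^p$, so that $\partial G/\partial z_n$ is superfluous in the integral closure) is really used.
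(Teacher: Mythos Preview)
Your approach is essentially the paper's: both differentiate $G\circ F\equiv 0$ with respect to $t$ to identify the Cramer vector field $c(\partial G/\partial t)$ with $-(\partial F_i/\partial t)_{i\le n-1}$, then invoke Proposition~\ref{P2.24} together with the Whitney hypothesis $\partial G/\partial t\in\overline{m_Y DG_{n-1}}$. Two small differences are worth noting: the paper also differentiates $G\circ F\equiv 0$ with respect to $s$ to obtain $\frac{\partial G}{\partial z_n}\circ F=[DG_{n-1}\circ F]\cdot\bigl(-\dot x_{i,s}/ps^{p-1}\bigr)$ and hence prove the reduction claim you flag as a secondary point, and the paper simply cites Proposition~\ref{P2.24} for the full equivalence, although that proposition only states one implication, so your explicit curve-criterion argument for the converse in fact supplies a detail the paper leaves implicit.
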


\begin{proof}
	Clearly\\ $0=\left[\frac{\partial (G\circ F)}{\partial t}\right] =\left[\begin{matrix}
	\frac{\partial G}{\partial t}\circ F & \frac{\partial G}{\partial z_1}\circ F  &...& \frac{\partial G}{\partial z_{n-1}}\circ F & \frac{\partial G}{\partial z_n}\circ F
	\end{matrix}\right]\cdot\left[\begin{matrix}
	1 \\
	\frac{\partial F_1}{\partial t} \\
	\vdots \\
	\frac{\partial F_{n-1}}{\partial t} \\
	0
	\end{matrix}\right]$
	
	\noindent which implies that $\frac{\partial G}{\partial t}\circ F =-[DG_{n-1}\circ F]\cdot \left[\begin{matrix}
	\frac{\partial F_1}{\partial t} \\
	\vdots \\
	\frac{\partial F_{n-1}}{\partial t}
	\end{matrix}\right]$. 
	
	We also have that $0=\frac{\partial (G\circ F)}{\partial s} =[JM(G)\circ F]\cdot \left[\begin{matrix} 0\\
	\frac{\partial F_1}{\partial s} \\
	\vdots \\
	\frac{\partial F_{n-1}}{\partial s} \\
	ps^{p-1}
	\end{matrix}\right]$. Hence, $\frac{\partial G}{\partial z_n}\circ F =[DG_{n-1}\circ F]\cdot \left[\begin{matrix}
	-\frac{\dot{x}_{1,s}}{ps^{p-1}} \\
	\vdots \\
	-\frac{\dot{x}_{n-1,s}}{ps^{p-1}}
	\end{matrix}\right]$, where $\dot{x}_{j,s}=\frac{\partial F_j}{\partial s}$,$\forall j\in\{1,...,n-1\}$.
	
	The functions $\frac{\dot{x}_{i,s}}{ps^{p-1}}$ are smooth on the normalization of $\cX$, which is $\bC\times \bC$, since the order of vanishing of $\frac{\partial F_i}{\partial s}$ is greater than $p-1$ in $s$. This shows that $\frac{\partial G}{\partial z_n}$ is in the integral closure of $DG_{n-1}$, hence $DG_{n-1}$ is a reduction of $JM_z(\cX)$. 
		
	%Thus, for each parameter $t$ we have that $J_zM(\cX)_{\mid_{\cX_t}}\subseteq\overline{{DG_{n-1}}_{\mid_{\cX_t}}}$. Since $\cX$ is a Whitney equisingular family of curves then the multiplicity $e(JM(\cX_t))$ is constant and $DG_{n-1}$ is a fiber by fiber reduction. Hence, we can apply the Principle of Specialization (see \cite{GK}) to conclude that $J_zM(\cX)\subseteq\overline{DG_{n-1}}$, i.e, $DG_{n-1}$ is a reduction of $J_zM(\cX)$.
	
	Let $c(\frac{\partial G}{\partial t})$ be the vector field associated to the Cramer's rule in order to solve the equation $$\left[\frac{\partial G}{\partial t}\right]=[DG_{n-1}]\cdot\xi. $$
	
	Since $\frac{\partial G}{\partial t}\circ F =[DG_{n-1}\circ F]\cdot \left[\begin{matrix}
	-\frac{\partial F_1}{\partial t} \\
	\vdots \\
	-\frac{\partial F_{n-1}}{\partial t}
	\end{matrix}\right]$ then 
	$c(\frac{\partial G}{\partial t})\circ F =\left[\begin{matrix}
	-\frac{\partial F_1}{\partial t} \\
	\vdots \\
	-\frac{\partial F_{n-1}}{\partial t}
	\end{matrix}\right] $.
	
	Therefore, $\frac{\partial F_1}{\partial t},...,\frac{\partial F_n}{\partial t}$, when viewed as functions on $\cX$, are Lipschitz functions rel $Y$ if and only if $c(\frac{\partial G}{\partial t})$ is a Lipschitz vector field rel $Y$ on $\cX$.  This is equivalent to $(\frac{\partial G}{\partial t})_D\in\overline{(DG_{n-1})_{D,Y}}$, by Proposition \ref{P2.24}, since $\frac{\partial G}{\partial t}\subset \overline {m_YJM_z(\cX)}=\overline {m_YDG_{n-1}}$.
\end{proof}

\newpage

{\sc Terence James Gaffney
	
	\vspace{0.5cm}
	
	{\small Department of Mathematics 
		
		Northeastern University
		
		{\tiny 567 Lake Hall - 02115 - Boston - MA, USA, t.gaffney@neu.edu }}}

\vspace{0.7cm}	
	
{\sc Thiago Filipe da Silva

\vspace{0.5cm}
	
	{\small Department of Mathematics
	
	Federal University of Esp\'irito Santo
	
	{\tiny Av. Fernando Ferrari, 514 - Goiabeiras, 29075-910 - Vit\'oria - ES, Brazil, thiago.silva@ufes.br}}}

\end{document}